\numberwithin{equation}{section}
\newtheorem{theorem}{Theorem}
\newtheorem{lemma}[theorem]{Lemma}
\newtheorem{corollary}[theorem]{Corollary}
\newtheorem{remark}{Remark}
\providecommand{\keywords}[1]
{
  \small	
  \textbf{\textit{Keywords---}} #1
}
\title{Integrability of Combinatorial Riemann Boundary Value Problem and Lattice Walks in Three Quadrants}
\author{Ruijie Xu\thanks{\href{mailto:xuruijie@bimsa.cn}{xuruijie@bimsa.cn}}}
\affil{\small Beijing Institute of Mathematical Science and Applications (BIMSA),\\No. 544, Hefangkou Village Huaibei Town, Huairou District Beijing 101408}
\date{ } % Activate to display a given date or no date (if empty),
\begin{document}
\maketitle
\begin{abstract}
We introduce a general framework of matrix-form combinatorial Riemann boundary value problem (cRBVP) to characterize the integrability of functional equations arising in lattice walk enumerations. A matrix cRBVP is defined as integrable if it can be reduced to enough polynomial equations with one catalytic variable. Our central results establish that the integrability depends on the eigenspace of some matrix associated to the problem. For lattice walks in three quadrants, we demonstrate how the obstinate kernel method transforms a discrete difference equation into a $3\times 3$ matrix cRBVP. The special double-roots eigenvalue $1/4$ yields two independent polynomial equations in the problem. The other single-root eigenvalue yields a linear equation. We obtain three independent equations from a $3\times 3$ system. Crucially, our framework generalizes three-quadrant walks with Weyl symmetry to models satisfying only orbit-sum conditions. It explains many criteria about the orbit-sum proposed by various researchers and it also explains the counter-example of lattice walks starting outside the quadrant.
\end{abstract} \hspace{10pt}

%TC:ignore
\keywords{Lattice walk, Riemann boundary value problem, Birkhoff factorization, Integrability}

\tableofcontents
\section{Introduction}
In algebraic or analytic combinatorics, many problems can be reduced to solving some functional equations. We discuss a special type of functional equation which appears widely in combinatorics. We call it the combinatorial version of the Riemann boundary value problem (cRBVP) due to its strong connection with the Riemann boundary value problem with Carleman shift \cite{xu2023combinatorial,fayolle2012some}. In general, the functional equation takes the following form,
\begin{align}
    H(1/x,t)=G(x,t)H(x,t)+C(x,t),\label{generic cRBVP}
\end{align}
where $G(x,t)$ and $C(x,t)$ are two known functions of $x$ and $t$. $H(x,t)$ and $H(1/x,t)$ are unknown functions. As an equation appearing in combinatorics, $H(x,t)$ is defined as the generating function of some combinatorial objects and we are looking for solutions as formal series of $t$ with polynomial coefficients in $x$.

The form \eqref{generic cRBVP} arises from the obstinate kernel method approach \cite{bousquet2005walks} for some 2-dimensional lattice walk problem. Mathematically speaking, a 2-D lattice walk model can be viewed as the simplest realization of the following linear discrete difference equation,
\begin{align}
    F(x,y,t)=P(x,y,t)+t\sum_{k,l}P_{kl}(x,y,t)\Delta_x^{(k)}\Delta_y^{(j)}F(x,y,t)\label{DDE},
\end{align}
where $P(x,y,t),P_{kl}(x,y,t)$ are known functions. $F(x,y,t)$ is the unknown and $\Delta_x$ ($\Delta_y$) is the discrete derivative with respect to $x$ ($y$)
\begin{align}
    \Delta_x: F(x,y,t)\to \frac{F(x,y,t)-F(0,y,t)}{x}.
\end{align}
The operator $\Delta_x^{(i)}$ is obtained by applying $\Delta_x$ $i$ times.

By the obstinate kernel method \cite{mishna2009classifying}, we can eliminate the tri-variate unknown function $F(x,y,t)$ and derive, for example, equations of $F(x,0,t)$ in the form of \eqref{generic cRBVP}.

\subsection{Historical Context of Lattice Walks}
Lattice walks serve as abstract models for many problems in different fields, including probability theory \cite{malyshev1972analytical}, condensed matter physics \cite{brak2005directed}, integrable systems \cite{Tong2021} and representation theory\cite{postnova2021counting}. The objective of this study is to find the explicit number of $n$-step path (the number of configurations). By some combinatorial construction, such problem becomes solving generating functions (of the number of configurations) satisfying some functional equations \eqref{DDE}.

The most widely studied models in lattice walks are the lattice walks in quarter-plane with small steps. Small steps means that each step is unit length in the following directions $\{\uparrow,\downarrow,\leftarrow,\rightarrow,\nearrow,\nwarrow,\searrow,\swarrow\}$. In \cite{bousquet2010walks,mishna2009classifying}, the authors clasified all $256$ possible small step walks into $79$ different non-trivial two-dimensional models. Among these $79$ models, $23$ are associated with finite symmetry groups and can be further classified: $16$ models correspond to $D_2$ groups, five to $D_3$ groups, and two to $D_4$ groups. In \cite{bousquet2010walks,bousquet2005walks,bousquet2016elementary,mishna2009classifying}, the authors solved all these models using the algebraic kernel method and the obstinate kernel method. Quarter-plane lattice walk models can also be reduced to a Riemann boundary value problem (RBVP) with Carleman shift\cite{Litvinchuk2000} and were solved using the conformal gluing function \cite{raschel2012counting}. Another approach involves Tutte's invariants \cite{raschel2020counting}. In \cite{xu2023combinatorial}, we established a combinatorial equivalence between these three approaches, demonstrating their fundamental consistency. 

The analysis becomes more complex for walks in three quadrants (also called lattice walks avoiding a quadrant). Initial studies date back to \cite{bousquet2016square}. By the kernel method, the authors successfully solved the Weyl models \cite{bousquet2023walks,bousquet2021more}. These are the models the quarter-plane walks of which can be solved by the reflection principle \cite{Gessel1992}. The RBVP approach \cite{raschel2018walks} and the Tutte's invariant approach \cite{bousquet2023enumeration} only work for models with diagonal reflection symmetry. For models lacking reflection symmetries, there is little progress from these three different approaches. In \cite{price2022enumeration}, the author provided a remarkable approach to three-quadrant lattice walks using elliptic functions and gives exact integral expressions for all cases. The results can also be extended to 2-D walks in $M$-quadrant cones for any positive integer $M$ (for $M=1$, it is the quarter-plane. $M=3$ is the three-quadrants. For $M>4$, we shall consider the cones on the Riemann surface).

\subsection{Objective of This Paper}
One of the main objectives of this paper is to extend the obstinate kernel method introduced in \cite{bousquet2016square} to more three-quadrant models. The advantage of this method lies in its ability to reveal transparent algebraic structures. For example, it was shown in \cite{bousquet2016square} that the generating functions of simple lattice walks (walks with allowed steps $\{\uparrow,\downarrow,\leftarrow,\rightarrow\}$) in three quadrants are expressed as the sum of the corresponding generating function of the same walk in the quarter-plane and an algebraic function. For example,
\begin{align}
    F(1/x,0,t)=-\frac{1}{3}x^2Q(1/x,0,t)+M(1/x,t).\label{catalytic pre}
\end{align}
Here $F(1/x,0,t)$ represents the generating function of walks ending on the negative horizontal axis in the three-quadrants model and $Q(1/x,0,t)$ corresponds to the generating function of walks ending on the positive horizontal axis in the quarter-plane model. $M(x,t)$ is an algebraic function satisfying a polynomial equation,
\begin{align}
    P(M(x,t),M_1,M_2\dots M_n,x,t)=0,\label{catalytic}
\end{align}
where $M_1,M_2,\dots,M_n$ are parameters independent of $x$. \eqref{catalytic} is a polynomial equation in one catalytic variable. It can be solved via the general strategy proposed in \cite{bousquet2006polynomial}.

The authors of \cite{bousquet2016square} derived this algebraic structure from the orbit-sum property in the algebraic kernel method. \eqref{catalytic pre} holds since the orbit-sum of the three-quadrant walk is section-free (i.e., orbit-sum does not contain unknown functions in $x$ or $y$) and equals the orbit-sum of the corresponding walk in the quarter plane. They further provide a combinatorial proof through the reflection principle \cite{guy1992lattice}. We will discuss these concepts later in the calculations. Here we emphasize that the orbit-sum property also arises in the non-Weyl models. In \cite{bousquet2023walks}, \textbf{Section 3.2}, the author noted that for non-Weyl models, constructing $M(x,t)$ satisfying \eqref{catalytic pre} is infeasible due to the lack of symmetries. However, the existence of \eqref{catalytic} remains open. In this work, we establish \eqref{catalytic} by constructing a matrix cRBVP from the functional equation \eqref{DDE}.

\subsection{Integrability for Walks in Three Quadrants and the cRBVP}
To avoid case-by-case proofs, we select a typical three-quadrant model exhibiting the orbit-sum property but lacking Weyl symmetry. The generating function of this model satisfies a linear discrete difference equation of the form \eqref{DDE}. By applying the obstinate kernel method, we show that \eqref{DDE} can be transformed into a matrix-type cRBVP \eqref{generic cRBVP} with specific algebraic properties. These properties imply that there exists a function $A(x,t)$ that satisfies a polynomial equation with one catalytic variable.

To generalize the result, we discuss the algebraic properties of matrix type cRBVP in a generic setting. The matrix cRBVP arising in a lattice walk problem assumes the following elliptic structure,
\begin{align}
H(1/x,t)=\left(P_0(x,t)+P_1(x,t)\sqrt{\Delta(x,t)}\right)H(x,t)+C(x,t),\label{independent 1}
\end{align}
where $P_0(x,t)$ and $P_1(x,t)$ are matrices with rational entries in $x,t$ and $\Delta(x,t)$ is the discriminant of some quadratic polynomial.

Matrix-type cRBVPs are not limited to three-quadrant lattice walks. For models with diverse boundary conditions including $M$-quadrant cones, the construction of a cRBVP from the discrete difference equation via the obstinate kernel method is a universal procedure.

The solvability condition is straightforward: if there exist $n$ functions $A_1(x,t),\dots,A_n(x,t)$ satisfying $n$ independent solvable equations, then the $n$-dimensional matrix cRBVP admits a solution. By reducing the system \eqref{DDE}, which involves multiple unknown functions such as $F(x,y,t)$ and $F(0,y,t)$, to a single equation with one unknown function $A(x,t)$, \eqref{catalytic} parallels the concept of first integrals in PDE theory or integrals of motion in classical mechanics. Polynomial equations with catalytic variables are also called discrete difference equations.

The solvability of a multivariate DDE (or equivalently, a cRBVP) separates into two phases: integrability and explicit solution. We call a lattice walk model or matrix cRBVP integrable if one can find enough polynomial equations with one catalytic variable (potentially including D-finite terms, as discussed later). However, as with the classical notions of integrability, solving these equations explicitly requires distinct techniques. This paper focuses on establishing integrability, leaving explicit solutions for future work.

\subsection{Structure and Results of This Paper}
This paper is organized as follows,
\begin{enumerate}
    \item In \cref{three quarter}, we analyze a three-quadrant lattice walk model lacking reflection symmetry and establish its exact solvability. The model has allowed steps $\{\nearrow,\nwarrow,\downarrow\}$. In \cref{statement}, we demonstrate that the solvability of this model is determined by a linear equation and two polynomial equations with one catalytic variable. The integrability follows from the independence of these three equations. We further prove the existence and uniqueness of solutions to these equations.  
    \item In \cref{RBVP}, we develop a general framework for $3 \times 3$ matrix cRBVPs, establishing their integrability and characterizing their algebraic structures. Integrability depends on the eigenvalues of an associated matrix, and the results are extended to higher-dimensional systems. A concise summary is provided in \cref{conclude}.  

    \item In \cref{orbit example}, we analyze a counterexample to the conjecture that zero orbit sums imply algebraic generating functions \cite{bousquet2016square,bousquet2016elementary,raschel2020counting}. The model is a whole-plane lattice walk with steps $\{\leftarrow,\rightarrow,\uparrow,\downarrow\}$, starting from $(-1,-1)$ and restricting transitions from the first quadrant to the other three quadrants. Introduced in \cite{Buchacher2022}, this model exhibits D-algebraic generating functions despite a zero orbit-sum. Using the matrix cRBVP framework, we reveal the structural mechanisms behind this discrepancy and prove the model's integrability. 
\end{enumerate}

\subsection{Notations for Formal Power Deries}
Before analyzing lattice walk models and cRBVPs (combinatorial Riemann boundary value problems), we introduce the following conventions for formal power series.

A fractional formal power series in \( x \) is defined as:
\begin{align}
    f(x) = \sum_{k \geq k_0} f_k x^{k/d},
\end{align}
where \( d \in \mathbb{Z} \setminus \{0\} \). We consider fractional power series since we may take algebraic roots of some series. We denote \( [x^i]f(x) \) as the coefficient of $i$th degree term of $x$ in \( f(x) \). We denote \( [x^>]f(x) \) as the positive-degree terms of \( x \) in $f(x)$, \( [x^<]f(x) \) as the negative-degree terms of \( x \) and \( [x^\geq]f(x) \) as the nonnegative-degree terms of \( x \).

Let \( \mathbb{K} \) be a commutative ring and \( \overline{\mathbb{K}} \) its algebraic closure. We define:
\begin{enumerate}
    \item \( \mathbb{K}[t] \): Polynomials in \( t \) over \( \mathbb{K} \).
    \item \( \mathbb{K}\left[t, \frac{1}{t}\right] \): Laurent polynomials in \( t \) over \( \mathbb{K} \).
    \item \( \mathbb{K}^{\text{fr}}[[t]] \): Fractional power series in \( t \) over \( \mathbb{K} \).
    \item \( \mathbb{K}^{\text{fr}}((t)) \): Fractional Laurent series in \( t \) over \( \mathbb{K} \).
    \item \( \mathbb{K}(t) \): Rational functions in \( t \) over \( \mathbb{K} \).
\end{enumerate}
These notations generalize iteratively to multivariate series. For instance: \( \mathbb{R}(x)[[t]] \): Laurent series in \( t \) with coefficients in \( \mathbb{R}(x) \).

We classify functions based on their algebraic and differential properties:
\begin{enumerate}
    \item \textbf{Algebraic}: \( f(x) \) is algebraic over \( \mathbb{C}(x) \) if \( \exists P(f, x) = 0 \), where \( P \) is a polynomial with coefficients in \( \mathbb{C}(x) \).
    \item \textbf{D-finite (Holonomic)}: \( f(x) \) is D-finite if \( \exists L(f, f', \dots, f^{(n)}) = 0 \), where \( L \) is a linear differential operator with coefficients in \( \mathbb{C}(x) \).
    \item \textbf{D-algebraic (Hyperalgebraic)}: \( f(x) \) is D-algebraic if \( \exists P(f, f', \dots, f^{(n)}) = 0 \), where \( P \) is a polynomial differential operator with coefficients in \( \mathbb{C}(x) \).
    \item \textbf{Hyper-transcendental}: \( f(x) \) satisfies none of the above.
\end{enumerate}

\begin{figure}[ht!]
\centering
\includegraphics[scale=0.25]{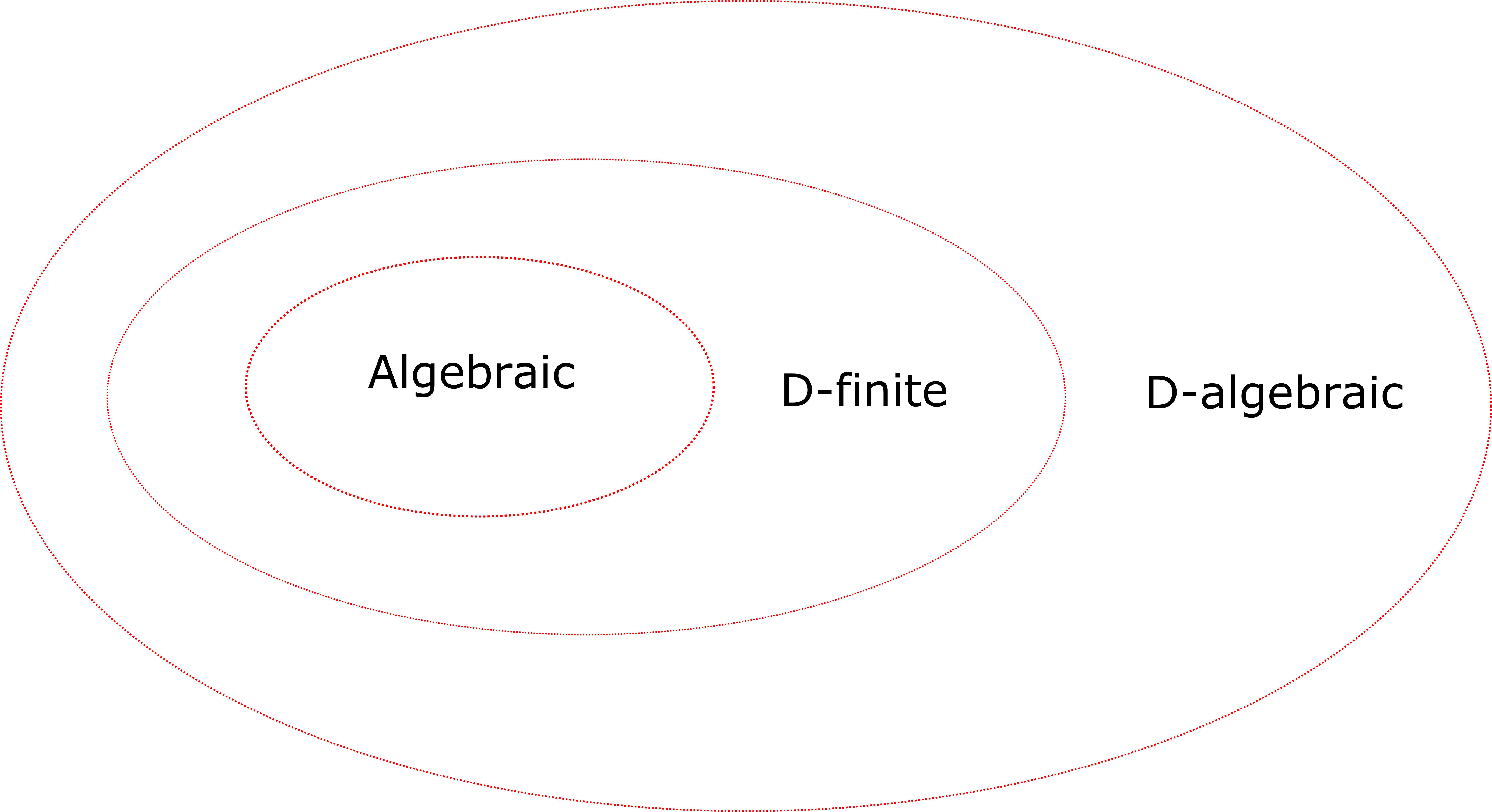}
\caption{The relation between algebraic, D-finite and D-algebraic}
\label{fig The relation between algebraic, D-finite and D-algebraic}
\end{figure}

For multivariate functions \( f(x_1, \dots, x_k) \),
\begin{itemize}
    \item \( f \) is D-finite if it satisfies linear differential equations in each variable \( x_i \) with coefficients in $\mathbb{C}(x_1, \dots, x_k)$.
    \item \( f \) is algebraic/D-algebraic if it satisfies polynomial (differential) equations in each \( x_i \) with coefficients in $\mathbb{C}(x_1, \dots, x_k)$.
\end{itemize}
For simplicity in the discussion, we may specify D-finite (algebraic, D-algebraic) over one variable $x_i$ if they satisfy the corresponding one-variable condition with coefficient in $\mathbb{C}(x_i)$. Notice that $f(x_1,x_2\dots x_k)$ is D-finite over each $x_i$ does not mean that it is a D-finite function.

\subsection{Notations for Lattice Walk Models}\label{def}
Lattice walks provide a foundational framework for modeling discrete difference equations. The recurrence relation of lattice walk is exactly a discrete Laplacian operator \cite{trotignon2022discrete,hoang2022discrete}. We construct the functional equation directly by the generating functions.
\begin{enumerate}
    \item The walk occurs on the $(i,j)$-plane. An arbitrary point is denoted as $(i,j)$. $x,y$ are the auxiliary variable for generating functions.
    \item The allowed step set $\mathcal{S}$ satisfies \[\mathcal{S} \subseteq \{-1,0,1\}\times \{-1,0,1\}\setminus \{(0,0)\}.\]
    The \emph{step generator} $S$ is defined as
\begin{align}
S(x,y) = \sum_{(k,l)\in\mathcal{S}}x^k y^l.
\end{align}
This can be regarded as the generating function of each step.
    \item The number of configurations, $f_{i,j,n}$ refers to the number of $n$-step paths from $(0,0)$ to $(i,j)$. The generating function is
\begin{align}
F(x,y,t)\equiv F(x,y)=\sum_{i,j,n}f_{i,j,n}x^i y^j t^n\equiv\sum_{i,j}F_{i,j}x^iy^j\label{function}.
\end{align}
We abbreviate $t$ in the notation. For instance, $F_{i,j}$ here is a formal series of $t$.
    \item The weight of a path is the product of the weight of each step. The default weight is $1$. We may add different weights to some special steps to change the symmetry of the model. For example, if we add weight $a$ to each step that visits $(0,0)$, \eqref{function} becomes,
\begin{align}
 F(x,y)=\sum_{i,j,n,k}f_{i,j,n,k}x^i y^j a^kt^n=\sum_{i,j}F_{i,j}(a)x^iy^j,
\end{align}
where $f_{i,j,n,k}$ is the number of paths of length $n$ that start at $(0,0)$, end at $(k,l)$, and visits $(0,0)$ $k$ times.
\item  We define the generating functions of walks ending on some lines. For example, the generating function of walks ending on the nonnegative $i$-axis,
\begin{equation}
[x^\geq y^0]F(x,y)\equiv F(x,0)=\sum_{i,n}f_{i\geq 0,n}x^it^n.
\end{equation}
\end{enumerate}
Other boundary terms will be defined during the calculations.

\subsection{Derivation of the Functional Equation}\label{Model}
To construct a functional equation, we obey the following logic: A lattice path is constructed by appending one step to a one-step shorter path. In the representation of generating functions, we have
\begin{align}
    F(x,y)= t S(x,y) F(x,y)+\text{boundary terms}.
\end{align}
Boundary terms are those steps which do not satisfy the definition of $F(x,y)$ or do not satisfy the general recursion relation.

For example. Suppose that we are considering a simple lattice walk ($\{\uparrow,\downarrow,\leftarrow,\rightarrow\}$) starting from $(0,0)$ and restricted in the right half-plane (including the $j$-axis). We defined $F(x,y)$ as the generating function of the paths ending in the right half-plane and $Q(x,y)$ as the generating function of the paths ending in the first quadrant (including the axis $j=0, i\geq 0$ and $i=0, j\geq 0$). The boundary terms for $F(x,y)$ are,
\begin{enumerate}
    \item $1$, refers to the first step. This is the initial step which does not satisfy the recursion relation.
    \item $-tF(0,y)/x$, which counts the illegal $\leftarrow$ on the $j$-axis.
\end{enumerate}
Then $F(x,y)$ satisfies the following functional equations,
\begin{align}
    F(x,y)=t\left(x+\frac{1}{x}+y+\frac{1}{y}\right)F(x,y)+1-tF(0,y)/x\label{func right half}.
\end{align}
The boundary terms for $Q(x,y)$ are
\begin{enumerate}
    \item $1$, refers to the first step.
    \item The illegal $\leftarrow$ on the positive $j$-axis. We denote them as $-tVp(y)/x=-\frac{t}{x}[y^\geq]F(0,y)$.
    \item The steps that quit the first quadrant. This is $-\frac{t}{y}[x^\geq y^0]F(x,y)$ ($\downarrow$ on line $j=0$). We denote them as $-\frac{t}{y}Hp(x)$.
    \item The steps that enter the first quadrant from the outside. This is $ty y^{-1}[x^\geq y^{-1}]F(x,y)$ ($\uparrow$ on line $j=-1$. Notice that the definition $[y^{-1}]f(y)$ only refers to the coefficients. The generating function of $y^{-1}$ terms is $y^{-1}[y^{-1}]f(y))$. We denote them as $tyHp_{-1}(x)/y$.
\end{enumerate}
(Naming convention: $Hp_{a}$ refers to the horizontal positive line $j=a, i\geq 0$. If $a=0$, we omit $a$ and write $Hp(x)$. $Vp$ refers to vertical positive). Similarly, $Hn$ represents `horizontal negative'. By convention, $0$ is included in the positive part. To simplify notation, we denote $1/x$ as $\bar{x}$ and $1/y$ as $\bar{y}$ throughout the analysis.

The functional equation for $Q(x,y)$ reads,
\begin{align}
    Q(x,y)=1+t(x+\bar{x}+y+\bar{y})Q(x,y)-t\bar{y}Hp(x)+tHp_{-1}(x)\label{upper half plane g}.
\end{align}
By the recursion relation, we can construct the functional equation of any generating function for lattice walk problems.

\eqref{upper half plane g} can also be obtained by taking $[y^{\geq}]$ terms of \eqref{func right half}. $Q(x,y)$ is a part of $F(x,y)$.

In the following sections, we present functional equations for diverse lattice walk models without explicit proofs, as they are systematically derived via the methodology outlined in this section.

\section{Walks Avoiding a Quadrant with Full Orbit-Sum  Properties}\label{three quarter}
We consider a model without Weyl symmetry, walk with allow steps $\{\nearrow,\nwarrow,\downarrow\}$ in three quadrants. This walk only has $x\to \bar{x}$ symmetry. Following the idea in \cite{bousquet2016square,bousquet2023walks}, we start this section by considering the orbit-sum and applying the algebraic kernel method \cite{bousquet2005walks}.
\subsection{Walks with Allowed Steps $\{\nearrow,\nwarrow,\downarrow\}$}
The generating function of walk in three quadrants is denoted as $F(x,y)$. The boundary terms $Hp(x)$, $Hn(\bar{x})$, $Vp(y)$, $Vn(\bar{y})$ are defined as per in \cref{Model}, which is horizontal positive, horizontal negative, vertical positive and vertical negative.

Moreover, we allow an extra $\{\nwarrow\}$ from $(0,-1)\to (-1,0)$ with weight $p$. If we choose $p=0$, this is the original three-quadrant model. In the calculation later, we show that this special step does not affect the integrability, but taking $p=1$ will simplify the calculation.

The functional equation of $F(x,y)$ reads,
\begin{align}
    (1-t(x y+y\bar{x}+\bar{y}))F(x,y)=1-t\bar{y}HN(\bar{x})-ty\bar{x}VN(\bar{y})+tp\bar{x}F_{0,-1}\label{model3 func}.
\end{align}
the kernel is defined as $K(x,y)=(1-t(x\bar{y}+y\bar{x}+\bar{y}))$ and it is invariant under the following two involutions
\begin{align}
    \phi :(x,y)\to(1/x,y) \qquad \psi:(x,y)\to \left(x,\frac{1}{y(x+\bar{x})}\right).\label{group for our model}
\end{align}
$\phi$ and $\psi$ generates a $D_2$ group $G$,
\begin{align}
(x,y)\to (1/x,y)\to \left(1/x,\frac{1}{y(x+\bar{x})}\right)\to \left(x,\frac{1}{y(x+\bar{x})}\right).
\end{align}
\begin{figure}[h]
\centering
\tikzstyle{element} = [rectangle, rounded corners, minimum width = 1cm, minimum height=0.8cm,text centered, draw = black]
\tikzstyle{arrow} = [->,>=stealth]
\begin{tikzpicture}[node distance=1cm]
\node[element](e00){$J_0:(x,y)$};
\node[element, below of =e00, yshift = -0.2cm,  xshift = -2cm](e10){$J_1:(\bar{x},y)$};
\node[element, below of =e00, yshift = -0.2cm,  xshift = 2cm](e01){$J_2:(\bar{x},\frac{1}{y(x+\bar{x})})$};
\node[element, below of =e10, yshift = -0.2cm, xshift=2cm](e44){$J_3:(x,\frac{1}{y(x+\bar{x})})$};
\draw [arrow] (e00) -- (e01);
\draw [arrow] (e00) -- (e10);
\draw [arrow] (e01) -- (e44);
\draw [arrow] (e10) -- (e44);
\end{tikzpicture}
    \label{fig:enter-label}
\end{figure}

\subsection{The Full Orbit-Sum}
If we use $J_0,J_1,J_2,J_3$ to denote the functional equation \eqref{model3 func} after applying the corresponding transformations in the group (Fig.\ref{fig:enter-label}), then the alternating sum $xy J_0-y\bar{x}J_1+\frac{\bar{x}}{(x+\bar{x})y}J_2-\frac{x}{(x+\bar{x})y}J_3$ is section free and reads,
\begin{align}
\begin{split}
    &\left(xy F(x,y)-y\bar{x}F(\bar{x},y)+\frac{\bar{x}}{(x+\bar{x})y}F\left(\bar{x},\frac{1}{y(x+\bar{x})}\right)-\frac{x}{(x+\bar{x})y}F\left(x,\frac{1}{y(x+\bar{x})}\right)\right)\\
    &=\frac{(x-1) (x+1) \left(x^2 y^2-x+y^2\right)}{x \left(x^2+1\right) yK(x,y)}\label{full orbit sum Model 3}.
\end{split}
\end{align}
The sum of all $g(xyJ_0),g\in G$ with suitable coefficients in $\mathbb{C}(t)(x,y)$\footnote{For walks without boundary weights, the coefficients are $\pm 1$ alternatively.} is called the orbit-sum ($OS$). We call the orbit-sum section free if the sum eliminates all boundary unknown functions on the right hand-side. If we consider the walk with same allowed steps but restricted in the quadrant and denote the generating function as $Q(x,y)$, the full orbit-sum of $Q(x,y)$ reads,
\begin{align}
\begin{split}
    &xy Q(x,y)-y\bar{x}Q(\bar{x},y)+\frac{\bar{x}}{(x+\bar{x})y}Q\left(\bar{x},\frac{1}{y(x+\bar{x})}\right)-\frac{x}{(x+\bar{x})y}Q\left(x,\frac{1}{y(x+\bar{x})}\right)\\
    &=\frac{(x-1) (x+1) \left(x^2 y^2-x+y^2\right)}{x \left(x^2+1\right) yK(x,y)}\label{full orbit sum Model 32}.
\end{split}
\end{align}
The right hand-sides of the orbit-sum of $F(x,y)$ and $Q(x,y)$ are equal. This strongly suggests a linear combination of $g(xyF(x,y)), g(xyQ(x,y)), g\in G$ has orbit-sum $0$, which is an algebraic criteria in quarter-plane lattice walk model \cite{bousquet2005walks}. In \cite{bousquet2023walks}, the author shows that if we consider $F(x,y)$ and $Q(x,y)$ for simple lattice walk $\{\leftarrow,\uparrow,\rightarrow,\downarrow\}$ in three quadrants, there is an algebraic function $A(x,y)$ satisfying
\begin{align}
        xy A(x,y)-y\bar{x}A(\bar{x},y)+\bar{xy}A\left(\bar{x},\bar{y}\right)-x\bar{y}A\left(x,\bar{y}\right)=0,
\end{align}
and 
\begin{align}
    F(x,y)=A(x,y)+\frac{1}{3} (Q(x,y)-\bar{x}^2 Q(\bar{x},y)-\bar{y}^2 Q\left(x,\bar{y}\right)).
\end{align}

The reason we choose the model $\{\nearrow,\nwarrow,\downarrow\}$ is due to the classification in \cite{bousquet2023walks}. The authors classified the $23$ models associated with finite group into two cases,
\begin{itemize}
    \item $7+4$ models with a monomial group (for every $g\in G$, the pair $g(x,y)$ consists of two Laurent monomials in $x$ and $y$). An example is the simple lattice walk. For walks restricted in one quadrant, seven of these models can be solved by reflection principles \cite{Gessel1992} and this is why they are called Weyl models. Four of them can be deformed to walks in a Weyl chamber and the corresponding quarter-plane models of these four have algebraic generating functions.
    \item $12$ non-monomial models which does not satisfy the condition. An example is the group of model with allow steps $\{\nearrow,\nwarrow,\downarrow\}$ shown in \eqref{group for our model}.
\end{itemize}

For the $4$ algebraic models, they automatically have orbit sum $0$ and we can choose $F(x,y)=A(x,y)$. In \cite{bousquet2023walks}, the authors proved for the King walk $(\{\leftarrow,\nwarrow,\uparrow,\nearrow,\rightarrow,\searrow,\downarrow,\swarrow\})$, and conjectured (\textbf{Conjecture 3.2}) for any of the seven Weyl models in $d$-dimension, one can find $A(x,y)$ in a general form,
\begin{align}
    A(x,y)=F(x,y)-\frac{\bar{x}\bar{y}}{2d-1}\left(\frac{OS(xy)}{K(x,y)}-\epsilon_\omega \omega(xyQ(x,y))\right).
\end{align}
$\omega$ is the group element $\omega=\phi\psi\phi\dots$ with length $l(\omega)=d$ and $\epsilon_\omega$ is the sign in the orbit sum. $A(x,y)$ is algebraic, $F(x,y)$ is D-finite. Further the boundary $Hn(\bar{x})$ satisfies the following equation,
\begin{align}
    Hn(\bar{x})=A_{-,0}(\bar{x})+\frac{(-1)^{d-1}}{2d-1}\Big\{\begin{array}{cc}
       \bar{x}^d Q(\bar{x},0)  & \text{if } d=2,4\\
       \bar{x}^d Q(0,x)  &  \text{if } d=3.
    \end{array}\label{condition}
\end{align}
where $A_{-,0}(\bar{x})=[x^<]A(x,0)$ is also algebraic.

The author also noted that for the $12$ non-monomial models, one cannot find such $A(x,y)$. They further noted that constructing such equations as \eqref{condition} requires specializations of \( F(x,y) \).

The model $\{\nearrow,\nwarrow,\downarrow\}$ is the simplest non-monomial case. It also does not satisfy $x/y$ reflection symmetry and cannot be solved via the analytic methods in \cite{raschel2020counting}. It only has a vertical reflection symmetry. 

In this section, we show that the algebraic property still exists. We can find two independent algebraic functions. Each satisfies a polynomial equation with one catalytic variable and is solvable by the general strategy introduced in \cite{bousquet2006polynomial}. A deeper theoretical analysis of this approach is provided in \cref{universal}.

\subsection{Representations of the Generating Functions}
We construct a combinatorial Riemann boundary value problem (cRBVP) by the obstinate kernel method \cite{xu2022interacting}. We draw some inspirations from the representation theory of Lie algebra. Every finite-dimensional representation admits a weight decomposition, and the representation is generated from the highest weight vectors. We have a similar situation in lattice walk problems. For example if we consider the generating functions of simple walks in the quarter-plane and denote the generating function of paths on line $j=k$ as $Q_k(x,0)$, then 
\begin{align}
    Q_1(x,0)=\frac{1}{t}Q(x,0)-\frac{1}{t}+\bar{x}Q(0,0)-(x+\bar{x})Q(x,0),
\end{align}
and
\begin{align}
    Q_{k+1}(x,0)=\frac{1}{t}Q_k(x,0)-Q_{k-1}(x,0)+\bar{x}Q_{0,k}-(x+\bar{x})Q_k(x,0).
\end{align}
$Q(x,0)$ acts as the highest weight `vector' and all $Q_k(x,0)$ are generated from $Q(x,0)$.

For our model, we can generate $Hn_k(\bar{x})$\footnote{$Hn_k(\bar{x})$ refers to horizontal negative $j=k$ line.} for $k\geq 0$ from $Hn(\bar{x})$. However, for the right half plane, to generate all the $Hp_k(x)$for $k\in \mathbb{Z}$, we need two $Hp_i(x)$ as generators. So, the matrix cRBVP shall involve three independent unknown functions. Let us choose $Hn(\bar{x}),Hp(x)$ and $Hp_{-1}(x)$ as three generators.

\subsection{Matrix cRBVP for Three-Quadrant Walks}
Now, let us start constructing the cRBVP. The cRBVP is an automorphism relation between $Hn(x)$, $Hp(x)$, $Hp_{-1}(x)$ and $Hn(\bar{x})$, $Hp(\bar{x})$, $Hp_{-1}(\bar{x})$. This suggests that we shall consider the generating function of paths ending in the upper half-plane and in the fourth quadrant which have these generators as boundaries. 

Denote the generating function of the walks ending in the upper half plane (including the $j=0$ axis) as $U(x,y)$. It satisfies,
\begin{align}
\begin{split}
&K(x,y)U(x,y)=1+t((x+\bar{x})y+\bar{y})U(x,y)\\
&+t(x+\bar{x})Hp_{-1}(x)-t\bar{x}F_{0,-1}-t\bar{y}HN(\bar{x})-t\bar{y}Hp(x)+t p \bar{x}F_{0,-1}\label{upper half plane}.
\end{split}
\end{align}
\begin{comment}
\eqref{upper half plane} can be interpreted as each path ending in the upper half plane is construct by another path ending in the upper half plane, attaching a step $\in\{\nearrow,\nwarrow,\downarrow\}$ to its end, or by another path ending on the line $j=-1,i\geq 0$, attaching a step $\in\{\nearrow,\nwarrow\}$. And we should subtract those illegal steps ending outside the upper plane.
\end{comment}

\eqref{upper half plane} can be rewritten in a kernel form,
\begin{align}
(1-t((x+\bar{x})y+\bar{y}))U(x,y)=1+t(x+\bar{x})HP_{-1}(x)-t(1-p)\bar{x}F_{0,-1}-t\bar{y}HN(\bar{x})-t\bar{y}HP(x)\label{upper half plane kernel form}.
\end{align}

The kernel $K(x,y)=(1-t((x+\bar{x})y+\bar{y}))$ has two roots as a function of $y$, namely,
\begin{align}
    \begin{split}
        &Y_0(x)=\frac{x-\sqrt{-4 t^2 x^3-4 t^2 x+x^2}}{2 \left(t x^2+t\right)}=t+t^3 \left(x+\frac{1}{x}\right)+\frac{2 t^5 \left(x^2+1\right)^2}{x^2}+\frac{5 t^7 \left(x^2+1\right)^3}{x^3}+O\left(t^9\right),\\
        &Y_1(x)=\frac{x+\sqrt{-4 t^2 x^3-4 t^2 x+x^2}}{2 \left(t x^2+t\right)}=\frac{x}{t \left(x^2+1\right)}-t-\frac{t^3 \left(x^2+1\right)}{x}-\frac{2 t^5 \left(x^2+1\right)^2}{x^2}+O\left(t^7\right).
    \end{split}
\end{align}
and 
\begin{align}
Y_0(x)Y_1(x)=\frac{1}{x+\bar{x}}, \qquad Y_0(x)+Y_1(x)=\frac{1}{t(\bar{x}+x)}.
\end{align}
Both roots are invariant under $x\to \bar{x}$. $Y_0(x)$ is analytic at $t=0$ and can be expanded as a formal series in $t$ while $Y_1(x)$ has a pole at $t=0$. Its expansion around $0<t<\epsilon$ for some small value $\epsilon$ contains a $1/t$ term.

$U(x,y)$ is a formal series in $\mathbb{C}[y,x,1/x][[t]]$, so we can substitute $y=Y_0(x)$ into it. $U(x,Y_0(x))$ is a well-defined formal pwoer series (or convergent series with finite $x$ and $t<\epsilon$ from an analytic point of view) and $K(x,Y_0(x))=0$. This eliminates $U(x,Y_0(x))$. Applying the transform $x\to 1/x$ and we get two automorphism relations,
\begin{align}
        &-t\bar{x}(1-p)F_{0,-1}-\frac{t Hn\left(\bar{x}\right)}{Y_0}+t \left(x+\bar{x}\right) Hp_{-1}(x)-\frac{t Hp(x)}{Y_0}+1=0\label{upper half plane 1},\\
        &-t x(1-p)F_{0,-1}-\frac{t Hn(x)}{Y_0}+t \left(x+\bar{x}\right) Hp_{-1}\left(\bar{x}\right)-\frac{t Hp\left(\bar{x}\right)}{Y_0}+1=0\label{upper half plane 2}.
\end{align}

$Y_0(x)$ is abbreviated as $Y_0$ in later calculations. 

The third automorphism relation comes from the fourth quadrant and is obtained from a similar calculation in quarter-plane models. Denote the generating functions of the paths ending in the fourth quadrant as $V(x,y)$. It satisfies the following equation,
\begin{align}
    K(x,y)V(x,y)=t(1-p)\bar{x}F_{0,-1}-t \left(x+\bar{x}\right) Hp_{-1}(x)+t\bar{y} Hp(x)-t \bar{y} Vn\left(\bar{y}\right).
\end{align}
$V(x,y)$ is in $\mathbb{C}[x,\bar{y}][[t]]$. We should substitute $y=Y_1(x)$ into the equation, since $1/Y_1=(x+\bar{x})Y_0\in \mathbb{C}(x)[[t]]$. $Vn(x,Y_0)$ is a well-defined substitution. We have,
\begin{align}
    t(1-p)\bar{x}F_{0,-1}+t \left(x+\bar{x}\right) Y_0 Hp(x)-t \left(x+\bar{x}\right) Hp_{-1}(x)-\frac{t Vn\left(\left(x+\bar{x}\right) Y_0\right)}{x\left(x+\bar{x}\right) Y_0}=0\label{fourth quadrant}.
\end{align}
Applying the transformation $x\to 1/x$ to \eqref{fourth quadrant} we get,
\begin{align}
    t(1-p)x F_{0,-1}+t \left(x+\bar{x}\right) Y_0 Hp\left(\bar{x}\right)-t \left(x+\bar{x}\right) Hp_{-1}\left(\bar{x}\right)-\frac{t x Vn\left(\left(x+\bar{x}\right) Y_0\right)}{\left(x+\bar{x}\right) Y_0}=0 \label{fourth quadrant 2}.
\end{align}
By a linear combination of \eqref{fourth quadrant} and \eqref{fourth quadrant 2}, we eliminate $Vn((x+\bar{x})Y_0(x))$ and get the third automorphism relation,
\begin{align}
    \bar{x}Y_0 Hp\left(\bar{x}\right)- xY_0Hp(x)+x Hp_{-1}(x)-\bar{x}Hp_{-1}\left(\bar{x}\right)=0\label{fourth quadrant 3}.
\end{align}

\subsection{The Null-Space}
\eqref{upper half plane 1}, \eqref{upper half plane 2}, \eqref{fourth quadrant 3} together gives a matrix cRBVP in $Hn(x),Hp(x),Hp_{-1}(x)$,
\begin{align}
    \left(\begin{array}{c}
    Hp(\bar{x})\\
    Hp_{-1}(\bar{x})\\
    Hn(\bar{x})
    \end{array}\right)=M
        \left(\begin{array}{c}
    Hp(x)\\
    Hp_{-1}(x)\\
    Hn(x)
    \end{array}\right)
    +
    \left(\begin{array}{c}
    -\frac{x Y_0 ((p-1) t x F_{0,-1}+1)}{t \left(x^2 Y_0^2-x+Y_0^2\right)}\\
    -\frac{x Y_0^2 ((p-1)t x F_{0,-1}+1)}{t \left(x^2 Y_0^2-x+Y_0^2\right)}\\
    \frac{Y_0 ((p-1) t F_{0,-1}+x)}{t x} 
    \end{array}\right)\label{RBVP for model 3},
\end{align}
where
\begin{align}
    M=\left(
\begin{array}{ccc}
 \frac{x^2 \left(x^2+1\right) Y_0^2}{x^2 Y_0^2-x+Y_0^2} & -\frac{x^2 \left(x^2+1\right) Y_0}{x^2 Y_0^2-x+Y_0^2} & \frac{x}{x^2 Y_0^2-x+Y_0^2} \\
 \frac{x^3 Y_0}{x^2 Y_0^2-x+Y_0^2} & -\frac{x^3}{x^2 Y_0^2-x+Y_0^2} & \frac{x Y_0}{x^2 Y_0^2-x+Y_0^2} \\
 -1 & \frac{\left(x^2+1\right) Y_0}{x} & 0 \\
\end{array}
\right).
\end{align}
Since $K(x,Y_0)=0$, we can simplify $M$ such that each entry in $M$ is linear in $Y_0$,
\begin{align}
    M=\left(
\begin{array}{ccc}
 \frac{t x^2 \left(x^2+1\right) \left(2 t-Y_0\right)}{4 t^2 x^2+4 t^2-x} & \frac{t x \left(x^2+1\right) \left(2 t x^2 Y_0+2 t Y_0-x\right)}{4 t^2 x^2+4 t^2-x} & -\frac{2 t^2 x^2+2 t^2+t x^2 Y_0+t Y_0-x}{4 t^2 x^2+4 t^2-x} \\
 -\frac{t x^2 \left(2 t x^2 Y_0+2 t Y_0-x\right)}{4 t^2 x^2+4 t^2-x} & \frac{x^2 \left(2 t^2 x^2+2 t^2+t x^2 Y_0+t Y_0-x\right)}{4 t^2 x^2+4 t^2-x} & -\frac{t \left(2 t x^2 Y_0+2 t Y_0-x\right)}{4 t^2 x^2+4 t^2-x} \\
 -1 & \frac{\left(x^2+1\right) Y_0}{x} & 0 \\
\end{array}
\right).
\end{align}
Substitute $Y_0(x)=\frac{1-\sqrt{\Delta}}{2 t (x+\bar{x})}$ in and write $M=P_0(x)+P_1(x)\sqrt\Delta$. $\sqrt{\Delta}=\sqrt{-4 t^2(x+\bar{x})+1}$.
\begin{align}
    M=\left(
\begin{array}{ccc}
 \frac{x^2}{2} & 0 & -\frac{1}{2} \\
 0 & \frac{x^2}{2} & 0 \\
 -1 & \frac{1}{2 t} & 0 \\
\end{array}
\right)+\sqrt{\Delta}
\left(
\begin{array}{ccc}
 \frac{x^3}{2 \left(4 t^2 x^2+4 t^2-x\right)} & -\frac{t x^2 \left(x^2+1\right)}{4 t^2 x^2+4 t^2-x} & \frac{x}{2 \left(4 t^2 x^2+4 t^2-x\right)} \\
 \frac{t x^3}{4 t^2 x^2+4 t^2-x} & -\frac{x^3}{2 \left(4 t^2 x^2+4 t^2-x\right)} & \frac{t x}{4 t^2 x^2+4 t^2-x} \\
 0 & -\frac{1}{2 t} & 0 \\
\end{array}
\right).
\end{align}
This is an equation in the form \eqref{independent 1}. Further, we can check that $Det|P_1(x)|=0$ and $Det|P_0(x)|\neq 0$. Thus, there is an left null-vector $v(\bar{x})$ such that $v(\bar{x})P_1(x)=0$. By some simple calculation, we have,
\begin{align}
    v(\bar{x})=\left(-2\bar{x}^2,\frac{\bar{x}^2}{t},1\right).
\end{align}
Multiply $v(\bar{x})$ on the left to \eqref{RBVP for model 3}. Since $v(\bar{x})P_1(x)=(0,0,0)$, we eliminate $\sqrt{\Delta}$ in the coefficients of $Hp(x),Hp_{-1}(x)$ and $Hn(x)$. The equation then reads,
\begin{align}
    -tx Hn\left(\bar{x}\right)+t\bar{x}Hn(x)+x Hp_{-1}(x)-\bar{x}Hp_{-1}\left(\bar{x}\right)+2t\bar{x}Hp\left(\bar{x}\right)-2txHp(x)+(x-\bar{x})Y_0=0\label{Matrix RBVP Model3 final}.
\end{align}
This is a linear equation of $Hp_{-1}(\bar{x}),Hp(\bar{x}),Hn(\bar{x})$ and $Hp_{-1}(x),Hp(x),Hn(x)$ with rational coefficients. It is suitable for taking $[x^>]$ and $[x^<]$ terms. We call an equation with this property a separable equation. Denote $PR(x)=[x^>](x-\bar{x})Y_0$ and $NR(x)=[x^<](x-\bar{x})Y_0$, \eqref{Matrix RBVP Model3 final} gives a linear relation among three unknown functions,
\begin{align}
    \begin{split}
        &t\bar{x}(Hn(x)-x F_{-1,0})-2 t x Hp(x)+x Hp_{-1}(x)+PR(x)=0\\
       & -t x (Hn\left(\bar{x}\right)-\bar{x}F_{-1,0})+2 t\bar{x} Hp\left(\bar{x}\right)-\bar{x}Hp_{-1}\left(\bar{x}\right)+NR\left(\bar{x}\right)=0\label{pos and neg}.
    \end{split}
\end{align}
\begin{remark}\label{orbit sum and null space}
    There is another way to find \eqref{Matrix RBVP Model3 final}. Take the $[y^1]$ degree term of the full orbit sum \eqref{full orbit sum Model 3},
\begin{align}
\begin{split}
    &x(Hn(x)+Hp(x))-\bar{x}(Hn(\bar{x})+Hp(\bar{x}))\\
    &+\bar{x}(x+\bar{x})Hp_{-2}(\bar{x})-x(x+\bar{x})Hp_{-2}(x)=-\frac{(x-1) (x+1) Y_0}{t x}\label{orbit sum and null space 111}.
\end{split}
\end{align}  
    Then by the recurrent relation between $Hp_{-2}(x,0),Hp_{-1}(x,0),Hp(x,0)$, we get exactly \eqref{Matrix RBVP Model3 final}. This shows that the full orbit sum condition is equivalent to the rank condition of $P_1(x)$ in the matrix cRBVP. We have more discussions on this in \cref{universal}.
\end{remark}
\subsection{Another Separable Equation}
\eqref{pos and neg} is not enough to solve the problem. We need to find at least two separable equations. To find another separable equation, we substitute \eqref{pos and neg} into \eqref{matrix RBVP} and eliminate $Hp(x),Hp(\bar{x})$, we get the following two equations,
\begin{align}
\begin{split}
&\sqrt{-4 t^2 x-4 t^2 \bar{x}+1} \left(-\frac{(p-1) F_{0,-1}}{x+\bar{x}}-xHp_{-1}(x)-\frac{x^2}{t \left(x^2+1\right)}\right)\\
&+\frac{(p-1)  F_{0,-1}}{x+\bar{x}}+t F_{-1,0}-2 t x Hn\left(\bar{x}\right)-t\bar{x}Hn(x)-PR(x)+\frac{x^2}{t \left(x^2+1\right)}=0,
\end{split}
\end{align}
\begin{align}
\begin{split}
&\sqrt{-4 t^2 x-4 t^2\bar{x}+1} \left(-\frac{2 (p-1) F_{0,-1}}{x+\bar{x}}-2\bar{x} Hp_{-1}\left(\bar{x}\right)-\frac{2}{t \left(x^2+1\right)}\right)\\
&\frac{2 (p-1)F_{0,-1}}{x+\bar{x}}+2 t F_{-1,0}-2 t xHn\left(\bar{x}\right)-4\bar{x} t Hn(x)+2NR(\bar{x})+\frac{2}{t \left(x^2+1\right)}=0
\end{split}
\end{align}
Here we use the definition of $NR(\bar{x})+PR(x)=(x-\bar{x})Y_0$ to move $NR(\bar{x})$ and $PR(x)$ outside the coefficients of $\sqrt{-4 t^2 (x+ \bar{x})+1}$. The difference of the above two equations gives,
\begin{align}
\begin{split}
    &\sqrt{-4 t^2 x-4 t^2\bar{x}+1} \left(\frac{(1-p)tF_{0,-1}}{ \left(x+\bar{x}\right)}-2t\bar{x} Hp_{-1}\left(\bar{x}\right)+tx Hp_{-1}(x)-\frac{1}{\left(x^2+1\right)}\right)\\
    &-\frac{x(1-p) F_{0,-1}}{t \left(x^2+1\right)}+t^2F_{-1,0}-3 t^2\bar{x}Hn(x)-t  PR(x)+\frac{1}{ \left(x^2+1\right)}=0\label{qudratic method 1}.
\end{split}
\end{align}
\eqref{qudratic method 1} has a very special form. If we denote
\begin{align}
\begin{split}
    &HL=-\frac{x(1-p) F_{0,-1}}{t \left(x^2+1\right)}+t^2F_{-1,0}-3 t^2\bar{x}Hn(x)-t  PR(x)+\frac{1}{ \left(x^2+1\right)},\\
    &HR=\left(\frac{(1-p)tF_{0,-1}}{ \left(x+\bar{x}\right)}-2t\bar{x} Hp_{-1}\left(\bar{x}\right)+tx Hp_{-1}(x)-\frac{1}{\left(x^2+1\right)}\right).
\end{split}
\end{align}
$HL$ only contains one unknown function in $x$, which is $Hn(x)$. $HR$ contains $\bar{x}Hp(\bar{x})$ and $xHp(x)$ and the equation reads,
\begin{align}
    HL=-\left(\sqrt{-4 t^2 x-4 t^2 \bar{x}+1}\right)HR.
\end{align}
Now, we can square $HL$ and eliminate the square-root in the equation. Further notice that if we let $p=1$, $HL$ and $HR$ will be simplified. Without loss of generality, we consider the case $p=1$. After squaring \eqref{qudratic method 1}, the equation reads,
\begin{align}
    \begin{split}
        &PR(x) \left(-2 tF_{-1,0}-\frac{2}{t \left(x^2+1\right)}\right)+\frac{2 F_{-1,0}}{ \left(x^2+1\right)}+t^2F_{-1,0}^2+PR(x)^2+\frac{4}{x \left(x^2+1\right)}\\
        &+9 t^2\bar{x}^2Hn(x)^2+Hn(x) \left(-6t^2\bar{x} F_{-1,0}+6t\bar{x}PR(x)-\frac{6}{x \left(x^2+1\right)}\right)\\
        &+\frac{4 Hp_{-1}\left(\bar{x}\right)^2 \left(4 t^2 x^2+4 t^2-x\right)}{ x^3}+\frac{4 Hp_{-1}\left(\bar{x}\right) \left(4 t^2 x^2+4 t^2-x\right)}{t x^2 \left(x^2+1\right)}\\
        &x Hp_{-1}(x)^2 \left(4 t^2 x^2+4 t^2-x\right)-\frac{2 Hp_{-1}(x) \left(4 t^2 x^2+4 t^2-x\right)}{t \left(x^2+1\right)}\\
        &-4 Hp_{-1}\left(\bar{x}\right) Hp_{-1}(x) \left(4 t^2x+4 t^2\bar{x}-1\right)=0\label{qudratic method 2}.
    \end{split}
\end{align}
We have two objectives,
\begin{enumerate}
    \item We want to separate the functions $Hp_{-1}(x)$ and $Hn(x)$.
    \item We want to separate $Hp_{-1}(x)$ and $Hp_{-1}(\bar{x})$.
\end{enumerate}
The main term preventing us from taking $[x^>]$ and $[x^<]$ is $4 Hp_{-1}\left(\bar{x}\right) Hp_{-1}(x) \left(4 t^2 x+4 t^2\bar{x}-1\right)$, because it is a product of a formal series in $x$ and a formal series in $\bar{x}$. However, it is symmetric under the transform $x\to 1/x$. Following the idea of \cite{bousquet2016square}, we first separate $Hp_{-1}(x)$ and $Hn(x)$. 

Without loss of generality, let us consider the following equation,
\begin{align}
    P(x)-A(x,\bar{x})+Q(\bar{x})=R(x)\label{qudratic method 2 ab}
\end{align}
where $P(x),R(x)$ are the sums of formal series in $x$ and finite number of $[x^<]$ monomials. $Q(\bar{x})$ are sums of formal series in $\bar{x}$ and finite number of $[x^>]$ monomials. $A(x,\bar{x})$ is symmetric under the transformation $x\to 1/x$.

To solve \eqref{qudratic method 2 ab}, first take the $[x^<]$ terms of it,
\begin{align}
    \left([x^<](P(x)-R(x)\right)+Q(\bar{x})-[x^>]Q(\bar{x})-[x^<]A(x,\bar{x})=0.\label{qudratic method 31}
\end{align}
The formal series part of $R(x),P(x)$ are eliminated by taking $[x^<]$ terms. Then apply the transformation $x\to 1/x$ to \eqref{qudratic method 31}, we have,
\begin{align}
        \left([x^>](P(\bar{x})-R(\bar{x})\right)+Q(x)-[x^<]Q(x)-[x^>]A(\bar{x},x)=0\label{qudratic method 32}.
\end{align}
The $[x^0]$ terms of \eqref{qudratic method 2 ab} shows,
\begin{align}
    [x^0]P(x)-[x^0]A(x,\bar{x})+[x^0]Q(\bar{x})=[x^0]R(x)\label{qudratic method 33}.
\end{align}
Notice the trivial equality,
\begin{align}
    [x^<]A(x,\bar{x})+[x^0]A(x,\bar{x})+[x^>]A(x,\bar{x})=A(x,\bar{x}).
\end{align}
$\eqref{qudratic method 31}+\eqref{qudratic method 33}+\eqref{qudratic method 32}$ gives,
\begin{align}
    Q(x)-A(x,\bar{x})+Q(\bar{x})=\text{sum of monomials in $x$}.\label{qudratic method reflect}
\end{align}
In \eqref{qudratic method reflect} we eliminate $R(x)$ and $P(x)$ in the equation and get an equation of $Q(x)$. In our case \eqref{qudratic method 2}, $A(x,\bar{x})$ contains $Hp_{-1}(x)Hp_{-1}(\bar{x})$, $P(x)$ contains $Hp_{-1}(x)$, $R(x)$ contains $Hn(x)$ and $Q(\bar{x})$ contains $Hp(\bar{x})$. So we eliminate $Hn(x)$ from \eqref{qudratic method 2} and find an equation of $Hp_{-1}(x),Hp_{-1}(\bar{x})$.

However, in our case, $P(x),R(x)$ are not sums of formal series and finite number of $[x^<]$ monomials. It is a formal series in $x$ with rational coefficients. We apply the following theorem in  \cite{bousquet2021more}.
\begin{lemma}(None-negative part at a pole).\label{lemma 3}
    Let $F(x)\in \mathbb{C}[x][[t]]$ and $\rho \in \mathbb{C}$. Then,
    \begin{align}
        &[x^\geq]\frac{F(\bar{x})}{1-\rho x}=\frac{ F(\rho)}{1-\rho x},\\
        &[x^0]\frac{F(\bar{x})}{1-\rho x}=F(\rho),\\
        &[x^\geq]\frac{F(\bar{x})}{(1-\rho x)^2}=\frac{ F(\rho)}{(1-\rho x)^2}+\frac{\rho F(\rho)}{1-\rho x}.
    \end{align}
\end{lemma}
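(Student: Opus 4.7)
The plan is to prove each identity by expanding both sides as formal Laurent series in $x$ with coefficients in $\mathbb{C}[[t]]$, reducing to a single monomial via linearity. Since $F(x)\in\mathbb{C}[x][[t]]$, write $F(x)=\sum_{n\geq 0}t^n F_n(x)$ with each $F_n$ a polynomial; all three claims are $\mathbb{C}[[t]]$-linear in $F$, so it suffices to verify them for $F(x)=x^k$ with $k\geq 0$ and then extend by linearity over the coefficients of $F_n$ and over $\mathbb{C}[[t]]$.

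For the first identity, substitute $F(\bar x)=\bar x^k$ and expand $(1-\rho x)^{-1}=\sum_{m\geq 0}\rho^m x^m$. Their product equals $\sum_{m\geq 0}\rho^m x^{m-k}$, and the operator $[x^{\geq}]$ restricts the sum to $m\geq k$. Re-indexing $j=m-k$ yields $\rho^k\sum_{j\geq 0}\rho^j x^j=\rho^k/(1-\rho x)=F(\rho)/(1-\rho x)$. The second identity then follows immediately by applying $[x^0]$ to the first identity and using $[x^0](1-\rho x)^{-1}=1$.

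For the third identity, run the same monomial reduction with $(1-\rho x)^{-2}=\sum_{m\geq 0}(m+1)\rho^m x^m$. Multiplying by $\bar x^k$ and keeping only terms with $m\geq k$ gives $\sum_{j\geq 0}(j+k+1)\rho^{j+k}x^j$, which splits cleanly as $\rho^k/(1-\rho x)^2+k\rho^k/(1-\rho x)$. Summing over $k$ weighted by the polynomial coefficients of $F_n$, the first piece assembles into $F(\rho)/(1-\rho x)^2$, while the second assembles into $\rho F'(\rho)/(1-\rho x)$ via the identity $\sum_k c_k\cdot k\rho^k=\rho F'(\rho)$, giving the stated formula (with the caveat that this calculation naturally produces $\rho F'(\rho)$ rather than $\rho F(\rho)$ in the second term).

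No substantive obstacle arises: because each $t^n$-coefficient of $F$ is a polynomial of bounded degree, every $x$-series that appears is a well-defined element of $\mathbb{C}((x))$, and $[x^{\geq}]$ commutes with the $t$-expansion, so the proof is pure bookkeeping of index shifts. The mildly delicate step is the $(1-\rho x)^{-2}$ case, where the shift $m\mapsto j+k$ introduces an extra factor of $k$ which must be recognized as producing a derivative $F'(\rho)$ after the resummation over monomials.
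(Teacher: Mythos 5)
Your proof is correct and follows essentially the same route as the paper's: reduce to the monomials $\bar{x}^k$, expand $(1-\rho x)^{-1}$ and $(1-\rho x)^{-2}$ as geometric series, re-index, and extend by linearity. Your caveat on the third identity is also well founded: the paper's own monomial computation yields the term $k\rho^k/(1-\rho x)$, which upon summing against the coefficients of $F$ gives $\rho F'(\rho)/(1-\rho x)$, so the $\rho F(\rho)$ in the displayed statement is evidently a typo for $\rho F'(\rho)$ and your version is the correct one.
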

\begin{proof}
    Expanding $\frac{1}{1-\rho x}$ as a formal series of $\rho x$, we have,
    \begin{align}
        [x^\geq]\frac{\bar{x}^k}{1-\rho x}=\bar{x}^k\sum_{n\geq k}\rho^n x^n=\frac{\rho^k}{1-\rho x},
    \end{align}
    and
    \begin{align}
    [x^\geq]\frac{\bar{x}^k}{(1-\rho x)^2}=\bar{x}^k\sum_{n\geq k}(n+1)\rho^n x^n=\rho^k\sum_{n\geq k}(n+k+1)\rho^n x^n=\frac{k\rho^k}{1-\rho x}+\frac{\rho^k}{(1-\rho x)^2}.
    \end{align}
These two equations are valid for any monomials and thus hold for $F(\bar{x})$.
\end{proof}
\begin{remark}
    To apply \cref{lemma 3}, we extend the condition from $\rho\in\mathbb{C}$ to $\rho\in\mathbb{C}[[t]]$. We need to take care of the `value' of each pole carefully, such that all series are compatible with each other. This is because we already have some conditions on the convergent domain.  Analytically speaking, the series expansion of $Y_0(x)$ only holds in the annulus away from the branch cut of $\sqrt{\Delta}$. All Laurent series shall be expanded in the same annulus.
\end{remark}
Let us apply \cref{lemma 3} to \eqref{qudratic method 2}. We have a factor $1+x^2$ in the denominator. $Hp_{-1}(x),Hn(x)$ are both defined as formal series in $\mathbb{C}(x)[[t]]$. If we consider them as convergent series on $x$-plane for fixed $0<t<\epsilon$,
\begin{align}
    Hp_{-1}(x)=\sum_{i=0}^\infty \sum_{k=i+1}^{\infty}f_{i,-1,k}t^kx^i.
\end{align}
The convergent domain is $|(\sum_{k=i+1}^{\infty}f_{i,-1,k}t^k)x^i|^{1/i}<1$, or briefly, $|x|<A/t$ for some constant $A$. For $Hp_{-1}(\bar{x})$, the convergent domain is $|x|>t/A$. Thus, if we consider Laurent expansion in the annulus $t/A<|x|<A/t$, \eqref{qudratic method 2} coincide with the formal series definition. For small $t$ the circle $|x|=1$ is inside this annulus. We can choose the series expansion of $\frac{1}{1+x^2}$ either as $\frac{1}{1+x^2}=\sum_{i=0}(-x^2)^i$ or $\frac{1}{1+x^2}=\sum_{i=1}(-\bar{x})^i$. $1+x^2$ appears as the denominator of $Hp_{-1}(x),Hp_{-1}(\bar{x})$ and $PR(x)$. For simplicity, we choose $\frac{1}{1+x^2}=\sum_{i=0}(-x^2)^i$ and the convergent domain becomes $t/A<|x|<1$ in the future. The series expansion of $\frac{Hp_{-1}(x)}{1+x^2}$ and $\frac{PR(x)}{1+x^2}$ still belong to $\mathbb{C}(x)[[t]]$ and by \cref{lemma 3}, 
\begin{align}
\begin{split}
&[x^>]\frac{4 Hp_{-1}\left(\bar{x}\right) \left(4 t^2 x^2+4 t^2-x\right)}{t x^2 \left(x^2+1\right)}=[x^>]\frac{4 Hp_{-1}\left(\bar{x}\right) \left(4 t^2 x^2+4 t^2-x\right)}{t x^2}\frac{1}{2}\left(\frac{1}{1-ix}+\frac{1}{1+ix}\right)\\
&=\frac{2 i Hp_{-1}(-i)}{t (1+i x)}-\frac{2 i Hp_{-1}(i)}{t (1-i x)}.
\end{split}
\end{align}
\eqref{qudratic method reflect} for our model reads,
\begin{align}
    \begin{split}
        &\frac{Hp_{-1}\left(\bar{x}\right)}{t x \left(x^2+1\right)}+\frac{x^3 Hp_{-1}(x)}{t \left(x^2+1\right)}+x^2 Hp_{-1}(x){}^2+\bar{x}^2Hp_{-1}\left(\bar{x}\right)^2-HP_{-1}\left(\bar{x}\right)Hp_{-1}(x)\\
        &-\frac{Hp_{-1}(i) x^2}{t (x-i) (x+i) \left(4 t^2 x^2+4 t^2-x\right)}-\frac{HP_{-1}(-i) x^2}{t (x-i) (x+i) \left(4 t^2 x^2+4 t^2-x\right)}\\
        &+\frac{t^2 x F_{-1,0}^2-x F_{-1,0}+x^2+1}{4 t^2 x^2+4 t^2-x}-\frac{2 t x F_{0,-1}}{4 t^2 x^2+4 t^2-x}=0.\label{poly 1}
    \end{split}
\end{align}
If we denote $A(x)=x\left(Hp_{-1}(x)+\frac{1+2x^2}{3t(1+x^2)}\right)$, \eqref{poly 1} reads,
\begin{align}
\begin{split}
    &A(\bar{x})^2-A(x)A(\bar{x})+A(x)^2\\
    &+\frac{t^2 x F_{-1,0}^2}{4 t^2 x^2+4 t^2-x}-\frac{x F_{-1,0}}{4 t^2 x^2+4 t^2-x}-\frac{2 t x F_{0,-1}}{4 t^2 x^2+4 t^2-x}\\
    &-\frac{Hp_{-1}(-i) x^2}{t (x^2+1) \left(4 t^2 x^2+4 t^2-x\right)}-\frac{Hp_{-1}(i) x^2}{t (x^2+1) \left(4 t^2 x^2+4 t^2-x\right)}\\
    &-\frac{t^2 x^6-t^2 x^4-t^2 x^2+t^2-x^5-x^3-x}{3 t^2 \left(x^2+1\right)^2 \left(4 t^2 x^2+4 t^2-x\right)}=0\label{poly 2}
\end{split}
\end{align}
In \eqref{poly 2}, the terms of $A(x)$ form an expression of cyclotomic polynomial  $a^2-ab+b^2$. Thus multiplying \eqref{poly 2} by $A(\bar{x})+A(x)$ and $\left(4 t^2 x+4 t^2\bar{x}-1\right)$ to clear the denominator, we have
\begin{align}
    \begin{split}
        &\bar{x}^3Hp_{-1}\left(\bar{x}\right)^3 \left(4 t^2 x+4 t^2\bar{x}-1\right)+\frac{\left(x^2+2\right) Hp_{-1}\left(\bar{x}\right){}^2 \left(4 t^2 x^2+4 t^2-x\right)}{t x^3 \left(x^2+1\right)}\\
        &+x^3 Hp_{-1}(x)^3 \left(4 t^2 x+4 t^2\bar{x}-1\right)+\frac{x\left(2 x^2+1\right) Hp_{-1}(x){}^2 \left(4 t^2 x^2+4 t^2-x\right)}{t \left(x^2+1\right)}\\
        &+Hp_{-1}\left(\bar{x}\right) \left(\bar{x}(t^2 F_{-1,0}^2+F_{-1,0})-2 t\bar{x} F_{0,-1}-\frac{Hp_{-1}(-i)+Hp_{-1}(i)}{t (x-i) (x+i)}+\frac{t^2 x^4+6 t^2 x^2+5 t^2-x}{t^2 x^2 \left(x^2+1\right)}\right)\\
        &+Hp_{-1}(x) \left( x (t^2F_{-1,0}^2+F_{-1,0})-2t x F_{0,-1}-\frac{(Hp_{-1}(-i)+Hp_{-1}(i)) x^2}{t (x-i) (x+i)}+\frac{5 t^2 x^4+6 t^2 x^2+t^2-x^3}{t^2 \left(x^2+1\right)}\right)\\
        &+t F_{-1,0}^2-\frac{F_{-1,0}}{t}-2 F_{0,-1}-\frac{(Hp_{-1}(-i)+Hp_{-1}(i)) x}{t^2 (x-i) (x+i)}+\frac{x^2+1}{t x}=0\label{poly 3}.
    \end{split}
\end{align}
In \eqref{poly 3}, the unknown functions in $\mathbb{C}[x][[t]]$ and unknown functions in $\mathbb{C}[\bar{x}][[t]]$ are separated. Thus we can take the $[x^>]$ and $[x^<]$ part of this equation. We still need to apply \cref{lemma 3} because of the term $\frac{Hp_{-1}(\bar{x})^2}{x^2+1}$. Further, we find $[x^0]$ term of \eqref{poly 3} reads
\begin{align}
    \frac{2 t^3 F_{-1,0}^2-2 t F_{-1,0}-2 Hp_{-1}(-i) Hp_{-1}(i) t+i Hp_{-1}(-i)-i Hp_{-1}(i)}{2 t^2}=0.
\end{align}
This solves
\begin{align}
    Hp_{-1}(-i)= \frac{2 t^3 F_{-1,0}^2-2 t F_{-1,0}-i Hp_{-1}(i)}{2 Hp_{-1}(i) t-i}.
\end{align}
We can eliminate $Hp_{-1}(-i)$ in the equation and reduce the number of unknowns. Actually, a more convenient substitution is to consider,
\begin{align}
    F_{-1,0}^2=\frac{2 t F_{-1,0}+2 Hp_{-1}(-i) Hp_{-1}(i) t-i Hp_{-1}(-i)+i Hp_{-1}(i)}{2 t^3}.\label{Fm1}
\end{align}
This will fortunately eliminate $F_{-1,0}$ in the $[x^>]$ part of \eqref{poly 3}.

After substitution, thte $[x^>]$ part of \eqref{poly 3} reads,
\begin{align}
    \begin{split}
        &t^2 x^2 \left(x^2+1\right) Hp_{-1}(x){}^3 \left(4 t^2 x^2+4 t^2-x\right)+t x \left(2 x^2+1\right) Hp_{-1}(x){}^2 \left(4 t^2 x^2+4 t^2-x\right)\\
        &+Hp_{-1}(x) \Big(-2 t^3x( x^2+1) Q_{0,-1}+Hp_{-1}(-i)Hp_{-1}(i) t^2 \left(x^2+1\right) x\\
        &-\frac{1}{2}Hp_{-1}(-i) it \left(x^2-2 i x+1\right) x+\frac{1}{2}Hp_{-1}(i) it \left(x^2+2 i x+1\right) x+5 t^2 x^4+6 t^2 x^2+t^2-x^3\Big)\\
        &-t^2 \left(x^2+1\right) F_{0,-1}+Hp_{-1}(-i) Hp_{-1}(i) t x^2+t x\left(x^2+1\right)\\
        &-\frac{1}{2} Hp_{-1}(-i) i(x-i) x+\frac{1}{2} Hp_{-1}(i) i(x+i) x=0\label{poly final}.
    \end{split}
\end{align}
\subsection{Existence and Uniqueness of the Solution}
\eqref{poly final} is a polynomial equation with one catalytic variable
\begin{align}
    P(Hp_{-1}(x),F_{0,-1},Hp_{-1}(i),Hp_{-1}(-i);x,t)=0.\label{polynomial equation}
\end{align}
In \cite{bousquet2006polynomial}, the authors introduced a general strategy to solve it. Namely, to solve a polynomial equation in the form,
\begin{equation}
P(Q(x),Q_1,Q_2\dots Q_k,t,x)=0.
\end{equation}
where $Q(x)$ is a formal series of $t$ with coefficients in $\mathbb{C}(x)$ and $Q_i$ are formal series in $t$, one performs the following process,
\begin{enumerate}
\item Differentiate the equation with respect to $x$:
\begin{equation}
Q'(x)\partial_{x_0}P(Q(x),Q_1,Q_2\dots Q_k,t,x)+\partial _x P(Q(x),Q_1,Q_2\dots Q_k,t,x)=0.
\end{equation}
\item Find roots $X$ which satisfy
\begin{equation}
\partial_{x_0}P(Q(X),Q_1,Q_2\dots Q_k,t,X)=0\label{bou 3 non}.
\end{equation}
Then 
\begin{equation}
\partial_x P(Q(X),Q_1,Q_2\dots Q_k,t,X)=0
\end{equation}
automatically holds.
\item If there are $k$ distinct $X_i$ such that \eqref{bou 3 non} holds, then we get $3k$ polynomial equations
\begin{align}
\begin{split}
&P(Q(X_i),Q_1,Q_2\dots Q_k,t,X_i)=0\\
&\partial_{x_0}P(Q(X_i),Q_1,Q_2\dots Q_k,t,X_i)=0\\
&\partial_x P(Q(X_i),Q_1,Q_2\dots Q_k,t,X_i)=0.
\end{split}\label{polyproof matrix}
\end{align}
\end{enumerate}
If these $3k$ equations are independent, the system admits a unique solution for $X_1,\dots X_k$, $Q(X_1)\dots Q(X_k)$, \( Q_1,\dots,Q_k \), thereby determining \( Q(x) \).  

The key challenge of this strategy is to find $k$ distinct $X_i$. However, we failed to find a feasible way to find these $X_i$ \eqref{polynomial equation}. In \cite{bousquet2023walks,bousquet2021more} the authors introduce a guess-and-check procedure to find these roots. We conjecture that this model can also be solved in a similar way.

Instead of finding the exact $X_1,X_2,X_3$, we give a simple proof to show that these $X_i$ exist and \eqref{polynomial equation} is integrable. The proof is based on the following two theorems (\textbf{Theorem 2} and \textbf{Theorem 4} in \cite{bousquet2006polynomial}).
\begin{theorem} (\textbf{Theorem 2} in \cite{bousquet2006polynomial})\label{theorem 2}
 Let $\Phi(x,t) \in \mathbb{K}[x]^{fr}[[t]]$ and $\mathbb{K}$ is an algebraically closed field.
 If $[t^0]\Phi(x,t)=x^k$, then $\Phi(x,t)$ has exactly $k$ roots $X_1,X_2,\dots X_k$ in $\mathbb{K}^{fr}[[t]]$.
\end{theorem}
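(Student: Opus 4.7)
The plan is to reduce the statement to the Newton--Puiseux theorem via Weierstrass preparation. First, I observe that any root $X(t) \in \mathbb{K}^{\mathrm{fr}}[[t]]$ of $\Phi(X(t), t) = 0$ must satisfy $X(0) = 0$: evaluating at $t = 0$ in the equation gives $X(0)^k = 0$. Hence every candidate root has strictly positive $t$-valuation, so we are counting ``small'' roots in the sense of the $t$-adic topology.

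Next, I clear denominators by setting $s = t^{1/d}$ for a common denominator $d$ of the fractional exponents appearing in $\Phi$, so that $\Phi \in \mathbb{K}[x][[s]] \subset \mathbb{K}[[x, s]]$. Because $\Phi(x, 0) = x^k$ is $x$-regular of order $k$, the formal Weierstrass preparation theorem applied in the variable $x$ produces a factorization $\Phi(x, s) = U(x, s) \cdot P(x, s)$ where $U$ is a unit in $\mathbb{K}[[x, s]]$ and $P(x, s) = x^k + c_{k-1}(s) x^{k-1} + \dots + c_0(s)$ is a distinguished polynomial with $c_i(0) = 0$ for every $i < k$. This reduces the problem to counting small roots of the monic degree-$k$ polynomial $P$.

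I then invoke the Newton--Puiseux theorem: since $\mathbb{K}$ is algebraically closed in characteristic zero, the Puiseux field $\mathbb{K}^{\mathrm{fr}}((t))$ is algebraically closed, so $P(x, t)$ has exactly $k$ roots in $\mathbb{K}^{\mathrm{fr}}((t))$ counted with multiplicity. To place each such root in the valuation ring $\mathbb{K}^{\mathrm{fr}}[[t]]$, a short valuation comparison in the identity $X^k = -\sum_{i < k} c_i(t) X^i$ suffices: since each $c_i$ has $v_t(c_i) > 0$, assuming $v_t(X) \leq 0$ makes the left-hand side the unique term of strictly lowest $t$-valuation, a contradiction. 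Finally, because $U$ is a unit, it vanishes nowhere, so the roots of $\Phi$ and of $P$ coincide, giving exactly $k$ roots.

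\textbf{Main obstacle.} The step requiring most care is the Weierstrass preparation in the mixed setting $\mathbb{K}[x]^{\mathrm{fr}}[[t]]$: the $t$-coefficients of $\Phi$ are polynomials in $x$ of possibly unbounded degree, so one must justify embedding $\Phi$ into $\mathbb{K}[[x, t^{1/d}]]$ and verify that the prepared polynomial $P$ actually has coefficients in $\mathbb{K}^{\mathrm{fr}}[[t]]$, a property that follows from the uniqueness clause of preparation. A more constructive alternative bypasses Weierstrass entirely by running Newton's polygon algorithm directly on $\Phi$: the rightmost vertex is $(k, 0)$, all slopes of the lower convex hull are positive, each slope segment of horizontal width $w$ contributes $w$ Puiseux roots with the corresponding leading exponent, and the widths telescope to $k$. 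Either route yields the desired count.
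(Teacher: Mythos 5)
The paper offers no proof of this statement---it is quoted directly as Theorem 2 of \cite{bousquet2006polynomial} and used as a black box---so there is no in-paper argument to compare yours against. On its own terms your proof is correct, and it follows the classical route taken in the cited source: reduce to counting small roots, then apply Newton--Puiseux, either after a Weierstrass preparation or directly through the Newton polygon. The individual steps check out: a root $X\in\mathbb{K}^{\mathrm{fr}}[[t]]$ must satisfy $X(0)^k=0$ and hence has strictly positive valuation; after setting $s=t^{1/d}$ the series embeds in $\mathbb{K}[[x,s]]$ and is $x$-regular of order $k$, so formal Weierstrass preparation yields $\Phi=U\cdot P$ with $P$ distinguished of degree $k$ and $c_i(0)=0$; the valuation comparison in $X^k=-\sum_{i<k}c_i(t)X^i$ forces every Puiseux root of $P$ to have positive valuation; and since $U(0,0)\neq 0$, the unit cannot vanish when a positive-valuation series is substituted, so the roots of $\Phi$ in $\mathbb{K}^{\mathrm{fr}}[[t]]$ are exactly the $k$ roots of $P$, counted with multiplicity. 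The two points you flag as delicate (the embedding into $\mathbb{K}[[x,s]]$ despite unbounded $x$-degree, and the fact that substitution of a positive-valuation series is a ring homomorphism there) are the right ones to flag, and both are routine to discharge; the Newton-polygon alternative is equally valid once one notes that the coefficients of $x^i$ with $i>k$ all have positive $t$-valuation and therefore contribute only large roots.

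One hypothesis deserves explicit mention. You invoke Newton--Puiseux under the assumption of characteristic zero, which does not appear in the statement as quoted, but it is genuinely necessary rather than a convenience: in characteristic $p$ the series $\Phi(x,t)=x^p-t^{p-1}x-t$ satisfies $[t^0]\Phi=x^p$, yet its roots begin $t^{1/p}+t^{(p^2-p+1)/p^2}+\cdots$ with unbounded exponent denominators, so none of them lie in $\mathbb{K}^{\mathrm{fr}}[[t]]$ as defined in this paper (a fractional power series has a single common denominator $d$). Your proof therefore establishes the theorem in the setting where it is true and where \cite{bousquet2006polynomial} intends it; the version quoted here simply omits the characteristic assumption.
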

\begin{theorem}(\textbf{Theorem 4} in \cite{bousquet2006polynomial})\label{theorem 4}
    Let $\mathbb{K}\subset \mathbb{L}$ be a field extension. For $1\leq i\leq n$, let $P_i(x_1,\dots x_n)$ be polynomials in indeterminate $x_1,\dots x_n$, with coefficients in the (small) field $\mathbb{K}$. Assume $F_1\dots F_n$ are $n$ elements of the (big) field $\mathbb{L}$ that satisfy $P_i(F_1,\dots,F_n)=0$ for all $i$. Let $J$ be the Jacobian matrix
    \begin{align}
        J=\left(\frac{\partial P_i}{\partial x_j}(F_1,\dots F_n)\right)_{1\leq i,j\leq n}\label{Jacob}
    \end{align}
    If $Det|J|\neq 0$, then each $F_j$ is algebraic over $\mathbb{K}$.
\end{theorem}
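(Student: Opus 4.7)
The plan is to translate the Jacobian hypothesis into the vanishing of the module of Kähler differentials $\Omega_{L/\mathbb{K}}$, where $L := \mathbb{K}(F_1,\dots,F_n) \subseteq \mathbb{L}$ is the field generated by the $F_j$, and then invoke the classical fact that $\Omega_{L/\mathbb{K}}=0$ forces $L$ to be algebraic over $\mathbb{K}$. First I would introduce $A := \mathbb{K}[x_1,\dots,x_n]/I$ with $I := (P_1,\dots,P_n)$, together with the evaluation homomorphism $\phi : A \to \mathbb{L}$ sending $x_i \mapsto F_i$, which factors as $A \twoheadrightarrow R \hookrightarrow L \hookrightarrow \mathbb{L}$ where $R := \mathbb{K}[F_1,\dots,F_n]$ and $L = \operatorname{Frac}(R)$.

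The central step is the conormal exact sequence
\[
I/I^2 \xrightarrow{\,d\,} \Omega_{\mathbb{K}[x_1,\dots,x_n]/\mathbb{K}} \otimes_{\mathbb{K}[x_1,\dots,x_n]} A \longrightarrow \Omega_{A/\mathbb{K}} \longrightarrow 0,
\]
in which the matrix of $d$ with respect to the generators $\{P_1,\dots,P_n\}$ and the basis $\{dx_1,\dots,dx_n\}$ is precisely the transposed Jacobian. Tensoring over $A$ with $\mathbb{L}$ along $\phi$, the middle term becomes $\mathbb{L}^n$ and $d$ becomes multiplication by $J(F)$; the hypothesis $\det J(F)\neq 0$ makes $d$ surjective, so $\Omega_{A/\mathbb{K}} \otimes_A \mathbb{L} = 0$. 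Pushing the vanishing along the tower $A \twoheadrightarrow R \hookrightarrow L$, using that localisation commutes with $\Omega$ and that $\mathbb{L}/L$ is faithfully flat, one deduces $\Omega_{L/\mathbb{K}} = 0$. Since $\dim_L \Omega_{L/\mathbb{K}} \geq \operatorname{tr.deg}_\mathbb{K} L$ for every finitely generated field extension (seen by differentiating any transcendence basis), the vanishing forces $\operatorname{tr.deg}_\mathbb{K} L = 0$, so each $F_j$ is algebraic over $\mathbb{K}$.

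The main technical point to watch is the bookkeeping in the transfer from $A$ to $L$: one has to check that the conormal sequence behaves well even when $I$ is not prime, so that $A$ may have zero divisors and $\phi$ may have a nontrivial kernel, and to keep straight which module is being localised at which stage. Once this is sorted out, the argument is purely formal and works in every characteristic, because the inequality $\dim_L \Omega_{L/\mathbb{K}} \geq \operatorname{tr.deg}_\mathbb{K} L$ requires no separability hypothesis on $L/\mathbb{K}$. An equivalent route, which I would keep in reserve as a cross-check, is to apply the Jacobian criterion for regularity to show that the prime $\ker \phi$ is a minimal prime of $I$ of height $n$ in $\mathbb{K}[x_1,\dots,x_n]$ and then conclude $\dim R = 0$ via the dimension formula for finitely generated $\mathbb{K}$-algebras. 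No combinatorial input from the lattice-walk problem is needed; the theorem is pure commutative algebra, and the authors will subsequently plug it into the uniqueness argument for the system produced by \eqref{polynomial equation} once the auxiliary roots $X_i$ have been located.
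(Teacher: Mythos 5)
Your argument is correct, but note first that the paper contains no proof of this statement to compare against: it is imported verbatim as \textbf{Theorem 4} of \cite{bousquet2006polynomial}. The proof in that reference is the elementary dual of yours: assuming some $F_j$ is transcendental over $\mathbb{K}$, one takes a nonzero $\mathbb{K}$-derivation $D$ of $L=\mathbb{K}(F_1,\dots,F_n)$, applies it to the relations $P_i(F_1,\dots,F_n)=0$ to obtain $J\cdot(DF_1,\dots,DF_n)^T=0$, and concludes from $\det J\neq 0$ that $D$ annihilates every $F_j$ and hence all of $L$, a contradiction. Since $\operatorname{Der}_{\mathbb{K}}(L,L)\cong\operatorname{Hom}_L(\Omega_{L/\mathbb{K}},L)$, your conclusion $\Omega_{L/\mathbb{K}}=0$ is literally the same assertion as the nonexistence of such a derivation, and your conormal-sequence computation is the transpose of applying $D$ to the equations; the two routes are equivalent in content, yours packaged in the machinery of K\"ahler differentials, the original needing only the existence of derivations on transcendental extensions. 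One small inaccuracy: the inequality $\dim_L\Omega_{L/\mathbb{K}}\geq \operatorname{tr.deg}_{\mathbb{K}}L$ does hold for finitely generated extensions in every characteristic, but it is not ``seen by differentiating any transcendence basis'' --- in characteristic $p$ the differentials of a transcendence basis may vanish (e.g.\ $t=s^p$ in $L=\mathbb{F}_p(s)$ gives $dt=0$), and the standard proof instead extracts the bound from a generating set whose differentials span $\Omega_{L/\mathbb{K}}$. This does not affect the present application, where the base field contains $\mathbb{C}(t)$ and has characteristic zero.
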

To prove the existence, let us change the variable to simplify the expression. First, $Hp_{-1}(i)$ and $Hp_{1}(i)$ are complex conjugate to each other. We can write
\begin{align}
\begin{split}
    &Hp_{-1}(i)=B_1+B_2 i\\
    &Hp_{-1}(-i)=B_1-B_2 i.
\end{split} 
\end{align}
Then we apply the following substitution,
\begin{align}
    \begin{split}
        &B_3=t (B_1^2+B_2^2)- B_2\\
        &B_4=t^2F_{0,-1}-\frac{1}{2}B_3\\
        &f(x)=Hp_{-1}(x).
    \end{split}
\end{align}
\eqref{poly final} then reads,
\begin{align}
\begin{split}
&B_1 x (2 t f(x)+1)-\frac{1}{2} B_3 \left(x^2-1\right)+B_4 \left(x^2+1\right) (2 t f(x)+1)\\
&-\left(t( x^2 +1) f(x)+x\right) \left(t x^2 f(x)^2 \left(4 t^2 \left(x^2+1\right)-x\right)+x f(x) \left(4 t^2 \left(x^2+1\right)-x\right)+t \left(x^2+1\right)\right)=0\label{polyproof 1}
\end{split}
\end{align}
This is a polynomial equation $P(f(x),B_1,B_3,B_4,t,x)=0$. We abbreviate it as $P(x)$. $\partial_{x_0}P(x)=0$ reads,
\begin{align}
\begin{split}
&2 B_1 t x^2+2 B_4 t \left(x^2+1\right) x-3 t^2x^2 \left(x^2+1\right)  \left(4 t^2 \left(x^2+1\right)-x\right)f(x)^2\\
&-2 t x\left(2 x^2+1\right)\left(4 t^2 \left(x^2+1\right)-x\right)f(x)-t^2 \left(5 x^4+6 x^2+1\right)+x^3=0\label{polyproof 2}
\end{split}
\end{align}
\eqref{polyproof 2} contains only positive powers in $t$. Let $t\to 0$, \eqref{polyproof 2} equals $x^3$. By \cref{theorem 2}, $\partial_{x_0}P(x)=0$ has three roots $X_1,X_2,X_3$.

$\partial_{x}P(x)=0$ reads,
\begin{align}
\begin{split}
    &+B_1 x (4t f(x)+1)- B_3 x++2 B_4\left(f(x) \left(3 t x^2+t\right)+x\right)\\
    &t^2 x \left(x \left(5 x^2+3\right)-8 t^2 \left(3 x^4+4 x^2+1\right)\right) f(x)^3+2 t \left(-2 t^2 \left(10 x^4+9 x^2+1\right)+4 x^3+x\right)f(x)^2\\
    &+x  \left(3 x-4 t^2 \left(5 x^2+3\right)\right)f(x)-t \left(3 x^2+1\right)=0
\end{split}
\end{align}
We have $3\times 3=9$ equations,
\begin{align}
\begin{split}
    &P(f(X_i),B_1,B_3,B_4,t,X_i)=0\\
   & \partial_{x_0}P(f(X_i),B_1,B_3,B_4,t,X_i)=0\\
   & \partial_x P(f(X_i),B_1,B_3,B_4,t,X_i)=0
\end{split}
\end{align}
for $i=1,2,3$. The $9$ unknowns are $f(X_i),X_i$($i=1,2,3$) and $B_1,B_3,B_4$.

We want to apply \cref{theorem 4} to prove that all unknowns are algebraic over the field generated by $\mathbb{C}(t)$, so we need to prove that $Det|J|$ defined in \cref{theorem 4} is not zero. Notice that this also implies that all roots are distinct.

We consider ordering the rows and columns of $J$ as follows,
\begin{small}
\begin{align}
    \begin{pmatrix}
        \partial_{x_0} P(X_1)& \partial_{x} P(X_1)& 0 & 0 & \dots &\partial_{B_1}P(X_1) &\partial_{B_2}P(X_1) &\partial_{B_3}P(X_1)\\
        \partial_{x_0}\partial_{x_0} P(X_1)& \partial_{x}\partial_{x_0}P(X_1)& 0 & 0 & \dots &\partial_{B_1}\partial_{x_0}P(X_1) &\partial_{B_2}\partial_{x_0}P(X_1) &\partial_{B_3}\partial_{x_0}P(X_1)\\
        \partial_{x_0}\partial_{x} P(X_1)& \partial_{x}\partial_{x} P(X_1)& 0 & 0 & \dots &\partial_{B_1}\partial_{x}P(X_1) &\partial_{B_2}\partial_{x}P(X_1) &\partial_{B_3}\partial_{x}P(X_1)\\
        0& 0 &\partial_{x_0} P(X_2)& \partial_{x} P(X_2)& \dots &\partial_{B_1}P(X_2) &\partial_{B_2}P(X_2) &\partial_{B_3}P(X_2)\\
        0 &0 &\partial_{x_0}\partial_{x_0} P(X_2)& \partial_{x}\partial_{x_0}P(X_2) & \dots &\partial_{B_1}\partial_{x_0}P(X_2) &\partial_{B_2}\partial_{x_0}P(X_2) &\partial_{B_3}\partial_{x_0}P(X_2)\\
         0 &0 & \partial_{x_0}\partial_{x} P(X_2)& \partial_{x}\partial_{x} P(X_2) & \dots &\partial_{B_1}\partial_{x}P(X_2) &\partial_{B_2}\partial_{x}P(X_2) &\partial_{B_3}\partial_{x}P(X_2)\\
         0&0&0&0&\dots&\dots&\dots&\dots
    \end{pmatrix}
\end{align}
\end{small}
Every $3k+1,3k+2,3k+3$ row are derivatives of $P(X_k),\partial_{x_0}P(X_k),\partial_{x}P(X_k)$. The column is indexed by derivatives to 
\[f(X_1),X_1,f(X_2),X_2,f(X_3),X_3,B_1,B_3,B_4.\] 
Notice that $\partial_{x_0}P(X_i)=\partial_x P(X_i)=0$ by the definition of $X_i$. Then as stated in \textbf{Theorem 3} in \cite{bousquet2016elementary}, $Det|J|$ factors into three blocks of size $2$ and one block of size $3$,
\begin{align}
    Det|J|=\pm\prod^3_{j=1}(\partial^2_{x_0x_0}P(X_j)\partial^2_{xx}P(X_j)-\partial^2_{x_0,x}P(X_j)^2)Det|\partial_{B_j}P(X_i)|_{1\leq i,j\leq 3}.
\end{align}
$Det|\partial_{B_j}P(X_i)|_{1\leq i,j\leq 3}\neq 0$ since a linear combination of $P(x),\partial_{x_0}P(x),\partial_xP(x)$ shows,
\begin{align}
    \begin{split}
        &B_3tx(x-1)+G_1(f(x),x,t)=0\\
        &B_1tx^2(x-1)^2(x+1)^2+G_2(f(x),x,t)=0\\
        &B_4tx(x+1)^2(x-1)^2+G_3(f(x),x,t)=0,
    \end{split}
\end{align}

where $G_1,G_2,G_3$ are the terms irrelevant to $B_1,B_3,B_4$. $x=0,1,-1$ are not the solutions $X_i$. $Det|\partial_{x_j}P(X_i)|_{1\leq i,j\leq 3}$ can be diagonalized and all values on the diagonal are not zero.

$(\partial^2_{x_0x_0}P(X_j)\partial^2_{xx}P(X_j)-\partial^2_{x_0,x}P(X_j)^2)\neq 0$ can be checked by direct calculation. Notice that,
\begin{align}
    &f(x)=F_{0,-1}+F_{1,-1}x+F_{2,-1}x^2+F_{3,-1}x^3+O(t^4)\\
    &B_1=F_{0,-1}-F_{2,-1}+O(t^4)\\
    &B_2=F_{1,-1}-F_{3,-1}+O(t^5).
\end{align}

Substitute the simulation results of $B_1,B_2,F_{0,-1}$ to $O(t^4)$ into  $(\partial^2_{x_0x_0}P(X_j)\partial^2_{xx}P(X_j)-\partial^2_{x_0,x}P(X_j)^2)$, we have,
\begin{align}
    (\partial^2_{x_0x_0}P(X_j)\partial^2_{xx}P(X_j)-\partial^2_{x_0,x}P(X_j)^2)=-9X_j^4+24(X_j^3+X_j^5)t^2+O(t^4).
\end{align}
We need to know the order of $t$ in $X_j(t)$ to make sure that this is not zero. So, we substitute the simulation results of $B_1,B_2,B_3, Hp_{-1}(x)$ to $\partial_{x_0}P(x)=0$,
\begin{align}
    \partial_{x_0}P=x^3+(-1-2x^2-x^4)t^2+O(t^4)=0.
\end{align}
By Newton Puiseux algorithm (or Newton polygon), we find the roots $X_j\sim t^{2/3}+O(t^{2/3})$. Thus,
\begin{align}
    (\partial^2_{x_0x_0}P(X_j)\partial^2_{xx}P(X_j)-\left(\partial^2_{x_0,x}P(X_j)\right)^2)\sim t^{8/3}+O\left(t^3\right).
\end{align}
which is not $0$. Then $Det|J|\neq 0$. By \cref{theorem 4}, $X_1,X_2,X_3,Hp_{-1}(X_i),Hp_{-1}(i),Hp_{-1}(-i),F_{0,-1}$ are algebraic over $\mathbb{C}(t)$. 

Now assume that we get the algebraic expression of $Hp_{-1}(x)$ via the general strategy. Substitute $Hp_{-1}(x)$ and $Hp_{-1}(\bar{x})$ in to \eqref{qudratic method 1} we solve $Hn(x)$, which reads,
\begin{align}
\begin{split}
    &Hn(x)+\frac{x}{3t}PR(x)=\frac{x}{3}F_{-1,0}+\frac{1}{3t\left(x+\bar{x}\right)}\\
    &+\frac{1}{3t}\sqrt{-4 t^2 x^3-4 t^2 x+x^2} \left(-2\bar{x} Hp_{-1}\left(\bar{x}\right)+xHp_{-1}(x)-\frac{1}{t\left(x^2+1\right)}\right)\label{Hn}.
\end{split}
\end{align}
$F_{-1,0}$ is solved by \eqref{Fm1}. It is a root of a quadratic equation whose coefficients are algebraic functions in $t$ (namely, $Hp_{-1}(i)$ and $Hp_{-1}(-i)$).  $Hn(x)$ is a sum of algebraic functions (right hand-side of \eqref{Hn}) and a D-finite term $\frac{xPR(x)}{3t}$. Thus, it is D-finite in $x,t$.

By a linear combination of \eqref{qudratic method 1} and \eqref{pos and neg}, we have
\begin{align}
\begin{split}
   &Hp(x)=-\frac{F_{-1,0}}{3 x}+\frac{Hp_{-1}(x)}{2 t}+\frac{PR(x)}{3 t x}+\frac{1}{6 t^2 x \left(x^2+1\right)}\\
   &+\sqrt{-4 t^2 x^3-4 t^2 x+x^2} \left(-\frac{Hp_{-1}\left(\bar{x}\right)}{3 t x^3}+\frac{Hp_{-1}(x)}{6 t x}-\frac{1}{6 t^2 x^2 \left(x^2+1\right)}\right).
\end{split}
\end{align}
$Hp(x)$ is a sum of some algebraic functions and a D-finite term $\frac{PR(x)}{3xt}$. It is D-finite.
\begin{remark}
Recall \eqref{condition}. $Hn(\bar{x})$ is written as an algebraic function $A_{-,0}(x)$ and $xQ(\bar{x},0)$. In \eqref{Hn}, $PR(x)$ comes from the orbit-sum of $F(x,y)$. We can also find it from the orbit sum of $Q(x,y)$ (quarter-plane models). We may take the $[y^1]$ terms of the corresponding \eqref{full orbit sum Model 32} of quarter-plane model,
\begin{align}
    xQ(x,0)-\bar{x}Q(\bar{x},0)=-\frac{(x-1) (x+1) Y_0}{t x}.
\end{align}
Then,
\begin{align}
    txQ(x,0)= -PR(x).
\end{align}
Thus, the algebraic equation $A_{-,0}(x)$ defined in \eqref{condition} is exactly the right hand-side of \eqref{Hn}. Although we cannot construct $A(x,y)$ for non-monomial models, we have $A_{-,0}(x)$.
\end{remark}

\subsection{More Algebraic Structures}\label{statement}
Besides $Hp_{-1}(x)$, this model has more algebraic properties. Let us consider substituting \eqref{pos and neg} into \eqref{RBVP for model 3}. But this time, we eliminate $Hp_{-1}(\bar{x})$ and $Hp(\bar{x})$. We will get the following equation,
\begin{align}
    \begin{split}
        & F_{-1,0}-2 x Hn\left(\bar{x}\right)-\frac{ Hn(x)}{x}-\frac{PR(x)}{t}+\frac{x^2}{t^2(x^2+1)}\\
        &+\sqrt{-4 t^2 x^3-4 t^2 x+x^2} \left(-\frac{ F_{-1,0}}{x}+\frac{ Hn(x)}{x^2}-2  Hp(x)+\frac{ PR(x)}{tx}-\frac{x}{t^2(x^2+1)}\right)
    \end{split}\label{Hn alg}
\end{align}
If we consider the substitution,
\begin{align}
    S(x)=\frac{Hn(x)}{x}+\frac{PR(x)}{3t}-\frac{1}{3}F_{-1,0}-\frac{1}{3t^3(x+\bar{x})},
\end{align}
\eqref{Hn alg} reads,
\begin{align}
\begin{split}
    &2  S\left(\bar{x}\right)+ S(x)=\\
    &\sqrt{-4 t^2 x^3-4 t^2 x+x^2} \left(-\frac{2  F_{-1,0}}{3 x}-\frac{2 \left(3 t^2 (x^2+1) Hp(x)+x\right)}{3t^2 \left(x^2+1\right)}+\frac{2  PR(x)}{3 t x}+\frac{ S(x)}{x}\right)\label{sx}
\end{split}
\end{align}
\eqref{sx} can be solved via exactly the same process as we solve $Hp_{-1}(x)$ in \eqref{qudratic method 1}. Although we still have a D-finite term $PR(x)$ in \eqref{sx}, if one carefully goes through the process, one may find that this $PR(x)$ does not appear in the final polynomial equation with one catalytic variable due to the trick of \eqref{qudratic method reflect}. So $S(x)$ is another algebraic function and should coincide with the solution \eqref{Hn}.

If we again remove $PR(x)$ in \eqref{sx} by \eqref{pos and neg}, we have
\begin{align}
    S(x)=\frac{2  Hn(x)}{3x}+\frac{2x}{3} Hp(x)-\frac{x}{3t} Hp_{-1}(x)-\frac{1}{3t ^2\left(x^2+1\right)}.
\end{align}
Thus, we find three independent equations.
\begin{enumerate}
    \item $Hp_{-1}(x)$ satisfies a polynomial equation with one catalytic variable.
    \item $\frac{2  Hn(x)}{3x}+\frac{2x}{3} Hp(x)-\frac{x}{3t} Hp_{-1}(x)-\frac{1}{3t ^2\left(x^2+1\right)}$ satisfies a polynomial equation with one catalytic variable.
    \item \eqref{pos and neg} is a linear equation between $Hn(x),Hp(x),Hp_{-1}(x)$ by a D-finite $PR(x)$.
\end{enumerate}
By \cref{orbit sum and null space}, $PR(x)$ can be obtained by $[y^0]\frac{OS(xy)}{K(x,y)}$. Thus if it equals $0$, all three independent equations are polynomial equations with rational coefficients in $x,t$. The solution is algebraic. This exactly proves the algebraic property of the model and the conjecture by Raschel and Trotignon in \cite{raschel2018walks}. They conjectured that for any finite group model, the three quadrant walks starting from $(-1,b)$ (or $(b,-1)$) have algebraic generating functions. This can be checked by the orbit-sum directly.

\section{The Combinatorial Riemann Boundary Value Problem in Matrix Form}\label{RBVP}
In this section, we theoretically analyze matrix cRBVP (combinatorial Riemann boundary value problem) in a general scheme and understand why the polynomial equations appear in the lattice walk in three quadrants.

Let us first review the integrability of the scalar RBVP and cRBVP. Consider $t$ as some fixed value. $x,\sigma(x)$ are two automorphism points on some Jordan curve $C$ with the condition $\sigma^2=Id$. $G(x,t)$ is H\"oder continues on $C$. Then on the curve $C$, the Riemann boundary value problem with Carlemann-shift is defined as,
\begin{align}
    H(\sigma(x),t)=G(x,t)H(x,t)+C(x,t).\label{generic RHP}
\end{align}
We may have some extra condition
\begin{align}
    &G(x,t)G(\sigma(x),t)=1\label{co1}\\
    &G(\sigma(x),t)C(x,t)+C(\sigma(x),t)=0\label{co2}.
\end{align}
\eqref{co1} is the condition that excludes the simple automorphism relation and \eqref{co2} is the solvability condition. If \eqref{co1} is not satisfied, we can solve \eqref{generic RHP} directly,
\begin{align}
    H(x,t)=\frac{G(\sigma(x),t)C(x,t)+C(\sigma(x),t)}{1-G(x,t)G(\sigma(x),t)}.
\end{align}

The Carelmann type Riemann boundary value problem is well studied in \cite{fayolle1999random,Litvinchuk2000}. Here, we are facing a combinatorial version of this problem. $H(x,t)$ is defined as a formal series in $\mathbb{C}[x][[t]]$, $G(x,t)\in \mathbb{C}[x,1/x][[t]]$ and $\sigma(x)\in \mathbb{C}[1/x][[t]]$. In \cite{xu2023combinatorial}, we proved that for a quarter-plane lattice walk problem, we can always reduce it to solving a functional equation of two unknowns $H_1(z,t)$ and $H_2(1/z,t)$. So, without loss of generality, we consider $\sigma(x)=1/x$. We recover the cRBVP defined in \eqref{generic cRBVP},
\begin{align}
    H(1/x,t)=G(x,t)H(x,t)+C(x,t).\label{generic RHP com}
\end{align}

In one-dimensional case, there is a standard approach to the solution using Gessel's canonical factorization \cite{gessel1980factorization}. Intuitively, if $G(x,t)$ admits a factorization, $G(x)=x^k G_-G_0G_+$ where $G_-\in\mathbb{C}[1/x][[t]]$, $G_+\in \mathbb{C}[x][[t]]$ and $G_0\in \mathbb{C}[[t]]$.
Then \eqref{generic RHP com} reads,
\begin{align}
   (G_-)^{-1} H(1/x,t)=x^kG_0G_+H(x,t)+(G_-)^{-1}C(x,t)\label{RHP solve}.
\end{align}
Except some finite monomials, $H(1/x,t)/G_-$ is a formal series in $1/x$ and and $G_0G_+H(x,t)$ is a formal series in $x$. Taking $[x^>]$ and $[x^<]$ will give two separate linear equations of $H(x,t)$ and $H(1/x,t)$.

The factorization can be obtained by a $log-exp$ procedure,
\begin{align}
    G(x,t)=e^{\log G(x,t)}=e^{\log x^kt^i}e^{[x^>]\log G(x,t)}\times e^{[x^0]\log G(x,t)}\times e^{[x^<]\log G(x,t)}.
\end{align}
For a well-defined $G(x,t)$ in lattice walk problems, such factorization always exists if we consider the Laurent expansion in some analytic domain with fixed branch (See Section.5.1 in \cite{xu2023combinatorial}). $G_-,G_+,G_0$ are in general D-algebraic functions \cite{xu2022interacting}.

\subsection{Birkhoff Factorization}
\eqref{generic RHP com} is always solvable as a scalar cRBVP. Now we treat \eqref{generic RHP com} as a matrix equation. For matrices, the $log-exp$ procedure is not applicable since matrices do not commute. In \cite{birkhoff1909singular}, Brikhoff proposed a recursive way to factorize a matrix.

\begin{lemma}(Auxiliary lemma in \cite{birkhoff1909singular})\label{birk lemma}
Let $\theta_{ij}(x)$ be a set of $n^2$ functions of $x$. single-valued and analytic for $|x|>R$, but not necessarily analytic at $x=\infty$ and also $Det|\theta_{ij}(x)|\neq 0$. Then it is possible to choose $n^2$ mulyipliers $\lambda_{ij}$, analytic at $x=\infty$, that 
\begin{align}
    \sum_{i=1}^n\lambda_{ij}(x)\theta_{jk}(x)=x^{-m}\zeta_{ik}(x)\label{birkhoff}\\
    \sum_{j=1}^n\theta_{ij}(x)\lambda_{jk}(x)=x^{-m}\zeta_{ik}(x),
\end{align}
where $m$ is a fixed positive integer or zero and $\zeta$ are entire functions of x for which the determinant $|\zeta_{ik}(0)|\neq 0$.
\end{lemma}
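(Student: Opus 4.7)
My plan is to emulate Birkhoff's original construction, which can be viewed as the matrix-valued counterpart of the scalar $\log$-$\exp$ factorization already used earlier for the scalar cRBVP. The idea is to build $\Lambda(x)=(\lambda_{ij}(x))$ as a product of successive left-corrections, each analytic at $\infty$, whose cumulative effect reduces $\Theta(x)=(\theta_{ij}(x))$ to the form $x^{-m}Z(x)$ with $Z$ entire and $\det Z(0)\neq 0$.

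I would begin by expanding each $\theta_{ij}(x)$ as a Laurent series in $1/x$ convergent on $|x|>R$ and truncating: write $\theta_{ij}(x)=P_{ij}(x)+\epsilon_{ij}(x)$, with $P_{ij}\in\mathbb{C}[x,x^{-1}]$ and $\epsilon_{ij}(x)=O(x^{-N})$ for a suitably large $N$. For the truncated matrix $P$, the ring $\mathbb{C}[x,x^{-1}]$ is a principal ideal domain, so the Smith normal form gives $P=U D V$ with $U,V\in\mathrm{GL}_n(\mathbb{C}[x,x^{-1}])$ and $D=\mathrm{diag}(x^{k_1},\ldots,x^{k_n})$. Splitting $U$ into a factor analytic at $\infty$ (absorbed into $\Lambda$) and a factor entire in $x$ (absorbed into $Z$) settles the polynomial case. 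To pass from the truncation $P$ to the full $\Theta$, I would run a Newton-type iteration: each correction to $\Lambda$ comes from solving a linear system whose solvability is guaranteed by $\det\Theta\neq 0$, and the $O(x^{-N})$ decay of $\epsilon$ together with Cauchy-type bounds on the Laurent coefficients of $\Theta^{-1}$ yields geometric convergence in $|x|>R'$ for some $R'\geq R$.

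The uniform exponent $m=\max(0,-\min_i k_i)$ then guarantees that $x^m D$ has only non-negative diagonal entries, so the product is entire. The symmetric identity $\sum_j\theta_{ij}\lambda_{jk}=x^{-m}\zeta_{ik}$ is obtained by running the same argument on $\Theta^{T}$ (or, equivalently, constructing a right factorization). The main obstacle I anticipate is the non-degeneracy condition $\det|\zeta_{ik}(0)|\neq 0$: the raw Birkhoff-type factorization produces unequal diagonal exponents $x^{k_i}$, so after forming $x^m D$ the diagonal of $Z$ vanishes at the origin unless the unequal powers are redistributed off-diagonal by further elementary operations. Carrying out this redistribution while preserving both the ``analytic at $\infty$'' property of $\Lambda$ and the uniform $x^{-m}$ prefactor is the delicate step, and it must be done after the convergent iteration terminates so that the invertibility of $\zeta(0)$ can be tracked through the final sequence of operations.
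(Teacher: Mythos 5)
The paper does not actually prove this lemma: it is quoted verbatim from Birkhoff's 1909 paper, and the text that follows merely records Birkhoff's successive-approximation scheme ($\Psi_0=Tx^{-m}$, $\Gamma_k=[x^\geq](\Theta^{-1}\Psi_k)$, $\Psi_{k+1}=Tx^{-m}+[x^{<-m}](\Theta\Gamma_k)$, \dots) and refers to \cite{birkhoff1909singular} for convergence. Your attempt at an independent proof via rational truncation, Smith normal form over $\mathbb{C}[x,x^{-1}]$, and a Newton-type correction is therefore a genuinely different route, but as written it has gaps that go to the heart of the statement rather than being routine details.

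First, the Smith normal form of the truncation $P$ over $\mathbb{C}[x,x^{-1}]$ does not produce $D=\mathrm{diag}(x^{k_1},\dots,x^{k_n})$: the invariant factors are general Laurent polynomials in a divisibility chain, and they are monomials only when $\det P$ is a unit $cx^k$, which a truncation of $\Theta$ will not be (its determinant generically has zeros in $\mathbb{C}^*$; one can push those with $|x|\le R$ into the entire factor, but this needs to be said and checked against $\det\zeta(0)\neq 0$). Second, and more seriously, "splitting $U$ into a factor analytic at $\infty$ and a factor entire in $x$" is itself exactly a Birkhoff factorization (of the polynomial loop group $\mathrm{GL}_n(\mathbb{C}[x,x^{-1}])$); asserting it as a one-line step begs the question, and the unimodular factor $V$ sits on the wrong side of $D$, so its negative powers of $x$ must be cleared by enlarging $m$, which tends to force $\zeta(0)$ to be singular. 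Third, the actual content of Birkhoff's \emph{auxiliary} lemma, as opposed to a generic factorization with distinct partial indices $x^{k_i}$, is precisely the normalization you defer at the end: a single uniform prefactor $x^{-m}$ together with $\det|\zeta_{ik}(0)|\neq 0$. You correctly identify this redistribution of the unequal exponents as "the delicate step" but never carry it out, so what your argument would establish (even granting the convergence of the Newton iteration, which is also only sketched) is a weaker statement than the lemma. To complete your route you would need to prove the algebraic Birkhoff decomposition for Laurent-polynomial matrices and then perform the column operations that equalize the partial indices while keeping $\Lambda$ analytic at $\infty$; alternatively, the shortest correct path is the one the paper points to, namely Birkhoff's integral-equation/successive-approximation construction.
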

In matrix form, \eqref{birkhoff} reads $\Lambda(1/x) \Theta=x^{-m}Z(x)$. The matrix $\Lambda=(\lambda_{ij})_{n\times n}$ satisfies,
\begin{align}
    \Lambda=[x^<](\theta^{-1}\Psi),
\end{align}
where $\Psi$ and $\Gamma$ are the solution of the following equation,
\begin{align}
\begin{split}
    &\Gamma=[x^\geq](\Theta^{-1}\Psi)\\
    &\Psi=Tx^{-m}+[x^{\leq-m}](\Theta \Gamma).
\end{split}
\end{align}
$T$ is an arbitrary constant matrix with $Det|T|\neq 0$. $\Psi$ and $\Gamma$ are construct by an infinite process,
\begin{align}
\begin{split}
    &\Psi_0=T x^{-m}\\
    &\Gamma_0=[x^>](\Theta^{-1}\Psi_0)\\
    &\Psi_1=Tx^{-m}+[x^{<-m}](\Theta^{-1}\Gamma_0)\\
    &\Gamma_1=[x^>](\Theta^{-1}\Psi_1)\\
    &\Psi_2=Tx^{-m}+[x^{<-m}](\Theta^{-1}\Gamma_1)\\
    &\dots \dots\label{iteration}.
\end{split}    
\end{align}
The condition of \cref{birk lemma} is suitable for our problems when $t$ is small enough. See \cite{birkhoff1909singular} for details\footnote{$\theta_{ij}$ is required to be analytic for $|x|>R$ such that the iteration \eqref{iteration} converges. This convergence is also satisfied if we interpret everything as a formal series in $t$.}. Thus, if $Det|G(x)|\neq 0$, $G(x)$ admits a Birkhoff factorization. By \cref{birk lemma} and the iterative process, we can factor $G(x,t)$ and obtain $n$ scalar cRBVP equations in the form of \eqref{RHP solve}. Then we solve the problem.

\subsection{Orbit Sum as an Integrable Condition for Brickhoff Factorization in Lattice Walks}\label{universal}
To recover the result of \cref{three quarter} and \cite{bousquet2016square,bousquet2023walks}, we need more considerations. Consider $G(x)$ as a $3\times 3$ matrix. It has a rational part $P_0(x,t)$ and a square root part $P_1(x,t)\sqrt{\Delta(x,t)}$. $H(x,t)^T$ is a column vector
 \[(H_1(x,t),H_2(x,t),H_3(x,t))^T.\]
\begin{align}
H(1/x,t)^T=\left(P_0(x,t)+P_1(x,t)\sqrt{\Delta(x,t)}\right)H(x,t)^T+C(x,t)^T,\label{independent}
\end{align}
\eqref{independent} reveals the form of the cRBVP appearing in three-quadrant lattice walks. Everything in this equation and everything defined later in this section are rational or 
 formal (or Puixes) series in $t$. We denote them in the ring $\mathbb{C}(x,t)^{fr}[[t]]$\footnote{Fractional power series is considered here since we do not reject $\sqrt{t}$ when we expand $\sqrt{\Delta}$ around $t=0$ with fixed branch. It does not affect the final result.}. $\sqrt{\Delta(x,t)})$ is invariant under $x\to 1/x$. All elements in $P_0(x,t),P_1(x,t)$ are rational in $x,t$. In the following calculations, we abbreviate $\sqrt{\Delta(x,t)}$ as $\sqrt{\Delta}$ and $H(x,t),P_0(x,t),P_1(x,t),C(x,t)$ and any functions of $x,t$ as $H(x),P_0(x),P_1(x),C(x)$ 
 .etc for connivance.

For \eqref{independent} to be a well defined matrix cRBVP, we further require,
\begin{align}
\begin{split}
    &\Big(P_0(x)+P_1(x)\sqrt{\Delta}\Big)
    \Big(P_0(1/x)+P_1(1/x)\sqrt{\Delta}\Big)=Id\\
    &\Big(P_0(1/x)+P_1(1/x)\sqrt{\Delta}\Big)
    \Big(P_0(x)+P_1(x)\sqrt{\Delta}\Big)=Id,\label{au}
\end{split}
\end{align}
and
\begin{align}
    \Big(P_0(1/x)+P_1(1/x)\sqrt{\Delta}\Big)C(x)^T+C(1/x)^T=0.
\end{align}
We first impose an extra condition that $P_0(x)$ is full rank but $P_1(x)$ is rank $2$ as the lattice walk example in \cref{three quarter}. Denote the base vector of the left null space of $P_1(x)$ as $v(1/x)$. One will see why it is parameterized by $1/x$ but not by $x$ later. Since $P_1(x)$ is rational in $x$, $v(1/x)$ can be rational in $x$. We have the following lemma,

\begin{lemma}\label{lemma 1}
    We can choose a suitable $v_L(x)$, such that
\begin{align}
    v_L(1/x)P_0(x)=v_L(x)\label{eq}.
\end{align}
\end{lemma}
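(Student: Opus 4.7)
My plan is to exploit the involution relation \eqref{au} combined with the kernel condition $v(1/x)P_1(x)=0$ to extract a scalar cocycle identity, then twist $v$ by a suitable scalar to obtain the desired $v_L$.

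First, I would expand the first identity in \eqref{au} and separate the rational and $\sqrt{\Delta}$ contributions. Since $\sqrt{\Delta}$ is not in the field of rational functions in $x,t$, both parts must vanish separately, giving
\begin{align}
P_0(x)P_0(1/x)+\Delta\, P_1(x)P_1(1/x)&=Id, \\
P_0(x)P_1(1/x)+P_1(x)P_0(1/x)&=0.
\end{align}
Left-multiplying both identities by $v(1/x)$ and using $v(1/x)P_1(x)=0$ yields
\[
v(1/x)P_0(x)P_0(1/x)=v(1/x), \qquad v(1/x)P_0(x)P_1(1/x)=0.
\]
The second relation says $v(1/x)P_0(x)$ lies in the left null space of $P_1(1/x)$. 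Substituting $x\mapsto 1/x$ in the defining hypothesis $v(1/x)P_1(x)=0$ gives $v(x)P_1(1/x)=0$, so $v(x)$ also lies in this null space. Since $P_1$ has rank $2$ in a $3\times 3$ system, its left null space is one-dimensional, and there must exist a scalar rational function $\lambda(x)$ with $v(1/x)P_0(x)=\lambda(x)v(x)$. Running the same argument with $x\mapsto 1/x$ gives $v(x)P_0(1/x)=\lambda(1/x)v(1/x)$, and substituting into the first identity above forces the cocycle condition $\lambda(x)\lambda(1/x)=1$.

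To finish, I would set $v_L(x):=\mu(x)v(x)$ and seek a scalar $\mu(x)$ satisfying $\mu(1/x)\lambda(x)=\mu(x)$, for then
\[
v_L(1/x)P_0(x)=\mu(1/x)v(1/x)P_0(x)=\mu(1/x)\lambda(x)v(x)=\mu(x)v(x)=v_L(x).
\]
The existence of $\mu$ amounts to trivializing the $1$-cocycle $\lambda$ on the cyclic group $\langle x\mapsto 1/x\rangle$ acting on $\mathbb{C}(x,t)^{\times}$; this is exactly Hilbert's Theorem 90 applied to the quadratic extension $\mathbb{C}(x,t)/\mathbb{C}(x+1/x,t)$. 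Alternatively one can build $\mu$ by hand from the divisor of $\lambda$: the cocycle condition pairs every zero of $\lambda$ at $x=a$ with a pole at $x=1/a$, so a product of elementary factors of the form $(x-a)^{n_a}$ with the $n_a$ chosen according to the paired multiplicities provides an explicit $\mu\in\mathbb{C}(x,t)$.

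The main obstacle I anticipate is ensuring that the chosen $\mu$ keeps $v_L(x)$ inside the formal/analytic class used throughout this section, namely $\mathbb{C}(x,t)^{\mathrm{fr}}[[t]]$, and in particular that $v_L$ admits a Laurent expansion compatible with the annulus in which the Birkhoff iteration \eqref{iteration} converges. Here the explicit divisor construction is preferable to the abstract Hilbert 90 argument, since it produces $\mu$ as a concrete rational function in $x$ with coefficients algebraic in $t$, which automatically lies in the required ring. A minor subtlety is the possibility that $v$ itself could be rescaled by a function $h(x+1/x)$ invariant under the involution; this freedom is harmless and merely reflects the nonuniqueness of $v_L$.
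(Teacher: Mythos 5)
Your proof is correct and follows the same skeleton as the paper's: separate the rational and $\sqrt{\Delta}$ parts of \eqref{au}, hit \eqref{au2} with the null vector to place $v(1/x)P_0(x)$ in the one-dimensional left null space of $P_1(1/x)$, deduce the proportionality $v(1/x)P_0(x)=\lambda(x)v(x)$, and extract the norm condition $\lambda(x)\lambda(1/x)=1$ from \eqref{au1}. The only genuine divergence is the final step. The paper writes $k(x)=f(x)/f(1/x)$ and justifies the existence of $f$ by viewing $\log f(x)-\log f(1/x)=\log k(x)$ as a scalar cRBVP with formal series solutions, then asserts without detail that rationality of $k$ forces rationality of $f$. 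Your appeal to Hilbert's Theorem 90 for the quadratic extension $\mathbb{C}(x,t)/\mathbb{C}(x+1/x,t)$ --- observing that $\lambda(x)\lambda(1/x)=1$ is precisely the norm-one condition for the involution $x\mapsto 1/x$ --- is cleaner and directly yields a rational $\mu$, which is exactly what the rest of the section requires; the explicit divisor construction you sketch (pairing a zero at $x=a$ with a pole at $x=1/a$) is the concrete content underlying both arguments and, as you note, is the safer route for keeping $v_L$ in the working ring. One small point worth making explicit in either write-up: $v(1/x)P_0(x)\neq 0$ because $\det P_0(x)\neq 0$, so the factor $\lambda(x)$ is a nonzero element of $\mathbb{C}(x,t)$ and the cocycle problem is well posed.
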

\begin{proof}
\eqref{eq} is obtained from the condition \eqref{au}. The first equation of $\eqref{au}$ is interpreted as the following two equations,
\begin{align}
    &P_0(x)P_0(1/x)+P_1(x)P_1(1/x)\Delta=Id\label{au1},\\
    &P_0(x)P_1(1/x)+P_1(x)P_0(1/x)=0\label{au2}.
\end{align}
Applying $v(1/x)$ of $P_1(x)$ on \eqref{au2}, we have
\begin{align}
    v(1/x)P_0(x)P_1(1/x)=0.\label{au3}
\end{align}
This suggest $v(1/x)P_0(x)$ is inside the null space of $P_1(1/x)$. Since $P_1(x)$ is rank two, $v(1/x)P_0(x)$ shall be proportional to $v(x)$. We denote it as $k(x)v(x)$. Since $P_1(x) \in \mathbb{C}(x,t)$, we can choose $v(1/x)\in \mathbb{C}(x,t)$. $k(x)v(x)=v(1/x)P_0(x)$ is also in $\mathbb{C}(x,t)$.

Left multiply $v(1/x)$ on \eqref{au1}, we have,
\begin{align}
    k(x)k(1/x)v(1/x)=v(1/x).
\end{align}
Thus $k(x)k(1/x)=1$. Denote $k(x)=\frac{f(x)}{f(1/x)}$, then $v_L(1/x)=f(1/x)v(1/x)$. 

$k(x)=\frac{f(x)}{f(1/x)}$ can be treated as a scalar cRBVP $\log{f(x)}-\log{f(1/x)}=\log{k(x)}$ and there exists formal series solutions. In fact, due to the rationality of $k(x)$, $f(x)$ is rational in $x$.
\end{proof}
With the observation of \cref{lemma 1}, multiply $v_L(1/x)$ to the left of \eqref{independent}, we have,
\begin{align}
    v_L(1/x)H(1/x)^T=v_L(x)H(x)^T+v_L(1/x)C(x)^T\label{automorphism relation 11}.
\end{align}
\eqref{automorphism relation 11} is suitable for taking $[x^>]$ and $[x^<]$ part since everything in this equation is rational. We can clear the denominator by multiplication or apply \cref{lemma 3}. Then we have,
\begin{align}
    \begin{split}
        &v_1(1/x)H(1/x)^T=NR_1(1/x)\\
        &v_1(x)H(x)^T=PR_1(x).\label{const 1}
    \end{split}
\end{align}
$v_1(x)=v_L(x)$ and $PR_1(x), NR_1(1/x)$ are the functions obtained by taking $[x^>],[x^<]$ part of \eqref{automorphism relation 11}. We find a linear equation of $H_1(x),H_2(x),H_3(x)$.
\begin{remark}
    If the rank of $P_1$ is one, by \eqref{au3}, there are two linearly independent vectors in the null space of $P_1$ with coefficients in $\mathbb{C}(x,t)$. We have two automorphism relations,
\begin{align}
\begin{split}
    v_1(1/x)P_0(x)=k_1(x)w_1(x)\\
    v_2(1/x)P_0(x)=k_2(x)w_2(x).
\end{split}
\end{align}
$w_1(x),w_2(x)$ are two vectors in the null-space and they do not need to be proportional to $v_1(x),v_2(x)$. We have two separable equations in the form \eqref{automorphism relation 11} and they are independent. By taking $[x^>]$ and $[x^<]$ part, we find two linear equation of $H_1(x),H_2(x),H_3(x)$. We will see an example of this case in \cref{outside the quadrant}.
\end{remark}
\begin{remark}\label{p0}
    If we exchange the condition from $Det|P_0(x)|\neq 0, Det|P_1(x)|=0$ to $Det|P_0(x)|=0, Det|P_1(x)|\neq 0$, by similar discussion, we have
    \begin{align}
         v(1/x)H(1/x)^T=(k(x)\sqrt{\Delta}) v(x)H(x)^T+v(1/x)C(x)^T.
    \end{align}
In addition, by \eqref{au1}, $ k(x)k(1/x)\Delta= 1$. This equation is a scalar cRBVP and by canonical factorization, it provides a linear equation of $H_1(x),H_2(x),H_3(x)$.
\end{remark}

\eqref{eq} in \cref{lemma 1} is exactly the full orbit-sum \eqref{orbit sum and null space 111} in lattice walk problems. An observation is,
\begin{corollary}\label{cor OS}
    In a lattice walk problem, the generating function is determined by a matrix cRBVP \eqref{independent}. If the full orbit-sum is section free, $P_1(x)$ is not full rank. 
\end{corollary}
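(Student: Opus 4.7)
The proof plan is to generalize the explicit computation carried out in \cref{orbit sum and null space} for the model with steps $\{\nearrow,\nwarrow,\downarrow\}$. There it was shown that the $[y^1]$ coefficient of the full orbit-sum \eqref{full orbit sum Model 3}, after applying the boundary recurrence relating $Hp_{-2}, Hp_{-1}, Hp$, coincides exactly with the rational relation \eqref{Matrix RBVP Model3 final} that is obtained by projecting the matrix cRBVP onto the left null space of $P_1(x)$. The same mechanism should operate in general.

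First, I would extract a rational linear relation from the hypothesis. A section-free orbit-sum has the form
\[
\sum_{g\in G}\epsilon_g\, g(xy\,F(x,y)) = \frac{R(x,y)}{K(x,y)},
\]
where the right-hand side contains no unknown boundary series. Extracting the $[y^k]$ coefficient corresponding to the boundary line on which the cRBVP is posed (typically $k=1$ for the $y=0$ boundary) yields, after using the finite linear recurrences that express higher-line generating functions $H_{i,k}(x)$ in terms of the fixed set of boundary unknowns $H_1(x),\ldots,H_n(x)$ appearing in \eqref{independent}, a linear relation
\[
w(1/x)\,H(1/x)^T + u(x)\,H(x)^T = \rho(x,t),
\]
with coefficients $u,w,\rho$ rational in $x$ and $t$, and crucially with no $\sqrt{\Delta}$ factors.

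Next, I would compare this with the matrix cRBVP. Left-multiplying \eqref{independent} by an arbitrary row vector $w(1/x)$ gives
\[
w(1/x)\,H(1/x)^T = w(1/x)P_0(x)\,H(x)^T + w(1/x)P_1(x)\sqrt{\Delta}\,H(x)^T + w(1/x)C(x)^T.
\]
For this image to be $\sqrt{\Delta}$-free, one needs $w(1/x)P_1(x) = 0$. Since the boundary generating functions are uniquely determined as formal series in $t$, every rational linear identity among $H_1(x),\ldots,H_n(x),H_1(1/x),\ldots,H_n(1/x)$ forced by the walk model must be compatible with the cRBVP. Hence the rational relation obtained from the orbit sum must arise as such a left-multiplication, forcing the existence of a nonzero $w(1/x)$ in the left null space of $P_1(x)$, which proves $P_1(x)$ is rank-deficient.

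The main obstacle is the compatibility step: one must argue rigorously that any rational linear identity among the cRBVP unknowns is in the left-multiplication image of \eqref{independent}. This is essentially the statement that the kernel-method substitution $y = Y_0(x)$ generates all linear relations among the relevant boundary functions (over $\mathbb{C}(x,t)[\sqrt{\Delta}]$), so that the $\sqrt{\Delta}$-free part of this image is spanned precisely by the projections coming from null vectors of $P_1(x)$. Once this is established, \cref{lemma 1} provides the structural converse and the corollary follows. A cleaner route may be dimension counting: if $P_1(x)$ were full rank, then \cref{lemma 1} and \cref{p0} together would force every rational automorphism relation among the $H_i$ to be trivial, contradicting the nontrivial rational relation produced by the section-free orbit sum.
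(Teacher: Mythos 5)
Your proposal matches the paper's own justification: the corollary is presented there as an observation that the section-free orbit-sum, after extracting the relevant $[y^1]$ coefficient and applying the boundary recurrences (exactly as in \cref{orbit sum and null space}), produces a rational, $\sqrt{\Delta}$-free separable relation between $H(x)$ and $H(1/x)$, which by the mechanism of \cref{lemma 1} can only arise from a left null vector of $P_1(x)$. The compatibility step you flag as the main obstacle — that every such rational relation must be realized as a left-multiplication of \eqref{independent} — is not addressed in the paper either, so your argument is at least as complete as the paper's.
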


\subsection{Solutions for the Eigenspace of Different Eigenvectors}\label{different eigenvalues}
From previous discussions, we notice that to find an equation in the form $v(1/x)P_0(x)=k(x)v(x)$, we shall consider the eigenspace of $P_0(x)P_0(1/x)$. Now, assume that we have the first eigenvalue $0$ and eigenvector $v_1(1/x)$ for $P_1(x)P_1(1/x)$, let us consider the remaining subspace.
\begin{lemma}\label{factor lemma}
\qquad 

\begin{enumerate}
    \item The matrix $P_0(x)P_0(1/x)$ and $P_1(x)P_1(1/x)$ ($P_0(1/x)P_0(x)$ and $P_1(1/x)P_1(x)$) share the same eigenvectors.
        \item Denote the eigenvalues of $P_0(x)P_0(1/x)$ as $\lambda_i(1/x)$, its eigenvectors as $v_1(x),v_2(x)$, $v_3(x)$. Assume $\lambda_i(x)\neq \lambda_j(x)$, $i,j=2,3$.
        \begin{enumerate}
            \item When $\lambda_i(x)\neq \lambda_i(1/x)$, we have,
        \begin{align}  
        \begin{split}
        &v_i(x)P_1(1/x)=m_{j1}(1/x)v_j(1/x)\\
        &v_i(x)P_0(1/x)=m_{j0}(1/x)v_j(1/x).\label{factor lemma 1}
        \end{split}
        \end{align}
        \item When $\lambda_i(x)= \lambda_i(1/x)$, we have,
        \begin{align} 
        \begin{split}
        &v_i(x)P_1(1/x)=m_{i1}(1/x)v_i(1/x)\\
        &v_i(x)P_0(1/x)=m_{i0}(1/x)v_i(1/x)\label{factor lemma 2}.
        \end{split}
        \end{align}
        \end{enumerate}
\end{enumerate}
\end{lemma}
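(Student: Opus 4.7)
The plan is to derive both parts from the algebraic identities \eqref{au1} and \eqref{au2} and the similarity they force between $A(x) := P_0(x)P_0(1/x)$ and $B(x) := P_0(1/x)P_0(x)$. Part~1 is immediate: solving \eqref{au1} gives $P_1(x)P_1(1/x) = \Delta^{-1}(I - A(x))$, so $P_1(x)P_1(1/x)$ differs from $A(x)$ by a scalar affine transformation. Consequently the two matrices share left and right eigenspaces, with eigenvalues linked by $\mu = (1-\lambda)/\Delta$. The parenthetical assertion about $P_0(1/x)P_0(x)$ and $P_1(1/x)P_1(x)$ is the same identity after the substitution $x \leftrightarrow 1/x$.

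For Part~2 I would first assemble the intertwinings $A(x) P_0(x) = P_0(x) B(x)$ and $A(x) P_1(x) = P_1(x) B(x)$, and their $x \to 1/x$ mirrors $B(x) P_0(1/x) = P_0(1/x) A(x)$ and $B(x) P_1(1/x) = P_1(1/x) A(x)$. The first is direct expansion; the second follows by chaining \eqref{au2} with its substituted twin $P_0(1/x) P_1(x) = -P_1(1/x) P_0(x)$ to get $A(x) P_1(x) = -P_0(x) P_1(1/x) P_0(x) = P_1(x) P_0(1/x) P_0(x) = P_1(x) B(x)$. Reading the lemma's convention so that $v_i(x)$ diagonalizes $B(x)$ with eigenvalue $\lambda_i(x)$ (equivalently, after $x\to 1/x$, $v_i(1/x)$ diagonalizes $A(x)$ with eigenvalue $\lambda_i(1/x)$, which is the stated parametrization of the spectrum of $A$), I right-multiply $v_i(x) B(x) = \lambda_i(x) v_i(x)$ by $P_0(1/x)$ and use the mirror intertwining to obtain $v_i(x) P_0(1/x) \cdot A(x) = \lambda_i(x)\, v_i(x) P_0(1/x)$, and likewise for $P_1(1/x)$. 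Hence both $v_i(x) P_0(1/x)$ and $v_i(x) P_1(1/x)$ are left eigenvectors of $A(x)$ with the same eigenvalue $\lambda_i(x)$.

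Invoking the simplicity hypothesis $\lambda_i(x) \neq \lambda_j(x)$, every eigenspace of $A(x)$ is one-dimensional, so each of these vectors must be a scalar multiple of the unique left eigenvector $v_k(1/x)$ of $A(x)$ whose eigenvalue $\lambda_k(1/x)$ equals $\lambda_i(x)$. The assignment $i \mapsto k$ is the involution $\sigma$ coming from the pairing of the multiset $\{\lambda_i\}$ under $x \leftrightarrow 1/x$: case~(b) $\lambda_i(x) = \lambda_i(1/x)$ forces $\sigma(i) = i$, whereas case~(a) $\lambda_i(x) \neq \lambda_i(1/x)$ gives $\sigma(i) = j \neq i$. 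The scalar factors $m_{j0}, m_{j1}$ (resp.\ $m_{i0}, m_{i1}$) are then just the projection coefficients along $v_{\sigma(i)}(1/x)$. The main obstacle is the bookkeeping of this eigenvector convention: one must stay consistent about which of $A$ or $B$ the function $v_i$ diagonalizes and at which argument, since otherwise the identities look off by a substitution and the pairing $\sigma$ is misidentified; once this is pinned down, both equations of the lemma descend from a single intertwining argument applied uniformly to $P_0(1/x)$ and $P_1(1/x)$.
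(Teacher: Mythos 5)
Your proposal is correct, and it reaches the lemma by a cleaner packaging of the same underlying identity. The paper also starts from \eqref{au2} and its $x\to 1/x$ twin to derive $P_0(1/x)P_0(x)P_1(1/x)=P_1(1/x)P_0(x)P_0(1/x)$ (its equation \eqref{au5}, your mirror intertwining $B(x)P_1(1/x)=P_1(1/x)A(x)$), but then sandwiches it between $v_i(x)$ and the \emph{right} eigenvectors $u_j(1/x)^T$ to get orthogonality relations $(\lambda_i(x)-\lambda_j(1/x))\,v_i(x)P_1(1/x)u_j(1/x)^T=0$, expands $v_i(x)P_1(1/x)$ in the left eigenbasis with a residual $v_1(1/x)$-component, and kills that component case by case; for $P_0(1/x)$ in case (b) it even has to renormalize $v_i(x)\to v_i(x)+a(x)v_1(x)$ with $a$ solving an auxiliary automorphism relation. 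You instead read the intertwining as saying that $v_i(x)P_0(1/x)$ and $v_i(x)P_1(1/x)$ are themselves left eigenvectors of $A(x)$ with eigenvalue $\lambda_i(x)$, so under distinct eigenvalues they are forced to be scalar multiples of the single matching eigenvector $v_{\sigma(i)}(1/x)$ — no basis decomposition, no separate elimination of the $v_1$-component, and no renormalization step. This buys a genuinely shorter argument and shows the paper's redefinition of $v_i$ is unnecessary whenever $\lambda_i(x)\neq 1$. Two small points you should make explicit: (i) one-dimensionality of \emph{all} eigenspaces of $A(x)$ needs $\lambda_1=1$ to differ from $\lambda_2,\lambda_3$ as well (the paper's proof assumes all three eigenvalues distinct; in the intended setting this follows from $\mu_1=0\neq\mu_2,\mu_3$), and this is also what rules out $\sigma(i)=1$ in case (a); (ii) $v_i(x)P_1(1/x)$ may vanish (unlike $v_i(x)P_0(1/x)$, since $P_0$ is invertible), in which case the claimed proportionality holds trivially with $m_{\cdot 1}=0$.
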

\begin{proof}
Denote the eigenvalues of $P_0(1/x)P_0(x)$ as $\lambda_i(x)$ and the eigenvalues of $P_1(1/x)P_1(x)$ as $\mu_i(x)$. By \eqref{au1}, every eigenvector of $P_0(x)P_0(1/x)$ is an eigenvector of $P_1(x)P_1(1/x)$ and,
\begin{align}
    \lambda_i(x)+\mu_i(x)\Delta=1.
\end{align}
This is the first statement.

Now assume all eigenvalues are distinct. Consider the left eigenvector of $P_0(x)P_0(1/x)$ as $v_i(1/x)$ and right eigenvectors as $u_i(1/x)^T$, the equation reads
    \begin{align}
    \begin{split}
        &v_i(1/x)P_0(x)P_0(1/x)=\lambda_i(1/x)v_i(1/x)\\
        &P_0(x)P_0(1/x)u_i(1/x)^T=\lambda_i(1/x)u_i(1/x)^T\\
        &v_i(1/x)P_1(x)P_1(1/x)=\mu_i(1/x)v_i(1/x)\\
        &P_1(x)P_1(1/x)u_i(1/x)^T=\mu_i(1/x)u_i(1/x)^T\label{eighenvectors}.
    \end{split}
    \end{align}
\eqref{eighenvectors} also holds with $x\to 1/x$.

 Multiple $P_0(1/x)$ to the left of \eqref{au2} we have,
\begin{align}
P_0(1/x)P_0(x)P_1(1/x)+P_0(1/x)P_1(x)P_0(1/x)=0.\label{au4}
\end{align}
Notice that $P_0(1/x)P_1(x)=-P_1(1/x)P_0(x)$ since \eqref{au2} also holds with $x\to 1/x$. Substitute this into \eqref{au4}, we have,
\begin{align}
    P_0(1/x)P_0(x)P_1(1/x)-P_1(1/x)P_0(x)P_0(1/x)=0\label{au5}.
\end{align}
Multiply $v_i(x)$ on the left and $u_j(1/x)^T$ on the right to \eqref{au5}, we have,
\begin{align}
     (\lambda_i(x)-\lambda_j(1/x))v_i(x)P_1(1/x)u_j(1/x)^T=0\label{au6}.
\end{align}
We can multiply $P_1(x)$ to the right of \eqref{au2} and by similar calculation, we get
\begin{align}
     P_1(1/x)P_1(x)P_0(1/x)-P_0(1/x)P_1(x)P_1(1/x)=0\label{au52}.
\end{align}
This shows,
\begin{align}
     (\mu_i(x)-\mu_j(1/x))v_i(x)P_0(1/x)u_j(1/x)^T=0\label{au7}.
\end{align}
Since $\mu_i(x)=\frac{1-\lambda_i(x)}{\Delta}$, the factor $(\mu_i(x)-\mu_j(1/x))$ is the same as $(\lambda_i(x)-\lambda_j(1/x))$. These two equations both hold after $x\to 1/x$ and for any $i,j$.

Consider $\lambda_i(x)\neq \lambda_i(1/x)$. Then $v_i(x)P_1(1/x)u_i(1/x)^T=0$. Since the vectors orthogonal to $u_i(1/x)^T$ is spanned by $v_j(1/x),i\neq j$, we have the following,
\begin{align}  
&v_i(x)P_1(1/x)=m_{j1}(1/x)v_j(1/x)+n_{j1}(1/x)v_1(1/x)\label{au 80}\\
 &v_i(x)P_0(1/x)=m_{j0}(1/x)v_j(1/x)+n_{j0}(1/x)v_1(1/x)\label{au 90}.
\end{align}
Multiply $P_1(x)$ to the right of \eqref{au 80}, since $v_1(1/x)P_1(x)=0$, we have
\begin{align}
    v_i(x)P_1(1/x)P_1(x)=\mu_i(x)v_i(x)=m_{j1}(1/x)v_j(1/x)P_1(x)\label{au 801}.
\end{align}
Again multiply $P_1(1/x)$ to the right of the second equality of \eqref{au 801}, we have,
\begin{align}
    \mu_i(x)v_i(x)P_1(1/x)=\mu_i(x)m_{j1}(1/x)v_j(1/x)+\mu_i(x)n_{j1}(1/x)v_1(1/x)=\mu_j(1/x)m_{j1}(1/x)v_j(1/x).\label{au 801ex}
\end{align}

The right most term is the result of the right most term of \eqref{au 801}. The second equality of \eqref{au 801ex} means $n_{j1}(1/x)=0$ and $\mu_i(x)=\mu_j(1/x)$. By symmetry, $\mu_i(1/x)=\mu_j(x)$ and
\begin{align}
\begin{split}
&v_i(x)P_1(1/x)=m_{j1}(1/x)v_j(1/x)\\
&v_j(x)P_1(1/x)=m_{i1}(1/x)v_i(1/x)\label{au 802}.
\end{split}
\end{align}
So in this case, $\mu_i(x)=\mu_j(1/x)=m_{j1}(1/x)m_{i1}(x)$. Multiply $ P_0(x)$ to the right hand-side of \eqref{au 802} and substitute in the relation $P_1(1/x)P_0(x)=-P_0(1/x)P_1(x)$ and \eqref{au 90}, we find $n_{j0}(1/x)=0$ as well. It further implies $\lambda_i(x)=m_{j0}(1/x)m_{i0}(x)$ and $m_{j0}(1/x)m_{i1}(x)+m_{j1}(1/x)m_{i0}(x)=0$.

Then, consider the case $ \lambda_i(x)= \lambda_i(1/x)$. This implies $\lambda_i(x)\neq \lambda_j(1/x)$, otherwise $\lambda_i(x)=\lambda_j(x)$, which is excluded in the condition of the lemma. By \eqref{au6}, $v_i(x)P_1(1/x)u_j(1/x)^T=0$. Since the vectors orthogonal to $u_j(1/x)^T$ is spanned by $v_i(1/x),i\neq j$, we have the following result,
\begin{align}  
&v_i(x)P_1(1/x)=m_{i1}(1/x)v_i(1/x)+n_{i1}(1/x)v_1(1/x)\label{au 8}\\
 &v_i(x)P_0(1/x)=m_{i0}(1/x)v_i(1/x)+n_{i0}(1/x)v_1(1/x)\label{au 9}.
\end{align}
We claim $n_{i1}(1/x)=0$. Multiply $P_1(x)$ to the right of \eqref{au 8} and by the symmetric equation of \eqref{au 8} with $x\to 1/x$, we have,
\begin{align}
\begin{split}
    &\mu_i(x)v_i(x)=v_i(x)P_1(1/x)P_1(x)
    =m_{i1}(1/x)v_i(1/x)P_1(x)+n_{i1}(1/x)v_1(1/x)P_1(x)\\
    &=m_{i1}(1/x)\Big(m_{i1}(x)v_i(x)+n_{i1}(x)v_1(x)\Big)+n_{i1}(1/x)v_1(1/x)P_1(x).\label{au 10}
\end{split}
\end{align}
The last term $v_1(1/x)P_1(x)=0$ since $v_1(1/x)$ is the null-space of $P_1(x)$. Thus, we have $n_{i1}(x)=0$ and $\mu_i(x)=m_{i1}(x)m_{i1}(1/x)=\mu_i(1/x)$ which automatically excludes the previous case.
Multiplying $P_0(x)$ to \eqref{au 9}, by the same approach, we have $\lambda_i(x)=\lambda_i(1/x)=m_{i0}(x)m_{i0}(1/x)$.

Further, we consider $v_i(x)\to v_i(x)+a(x)v_1(x)$. We have,
\begin{align}
    (v_i(x)+a(x)v_1(x))P_0(1/x)=m_{i0}(1/x)v_i(1/x)+n_{i0}(1/x)v_1(1/x)+a(x)v_1(1/x)\label{au extra}.
\end{align}
We choose $a(x)$ satisfying the following scalar automorphism relation,
\begin{align}
    n_{i0}(1/x)+a(x)=m_{i0}(1/x)a(1/x).\label{au extra2}
\end{align}
Notice that $m_{i0}(1/x)m_{i0}(x)=\lambda_i(x)\neq 1$, this is a simple automorphism relation (not scalar cRBVP) and we have a rational solution. Then, $v_i(x)+a(x)v_1(x)$ is the new $v_i(x)$ such that $n_{i0}(1/x)=0$.
\end{proof}
We can multiply $v_i(1/x)$ to \eqref{independent} on the left, by \eqref{factor lemma 1} and \eqref{factor lemma 2}, we have,
\begin{align}
\begin{split}
    v_i(1/x)H(1/x)^T=(m_{j0}(x)+m_{j1}(x)\sqrt{\Delta})v_j(x)H(x)^T+v_i(1/x)C(x)^T\label{au 10a},
\end{split}
\end{align}
or
\begin{align}
\begin{split}
    v_i(1/x)H(1/x)^T=(m_{i0}(x)+m_{i1}(x)\sqrt{\Delta})v_i(x)H(x)^T+v_i(1/x)C(x)^T\label{au 10b}.
\end{split}
\end{align}
If $v_i(x),v_j(x)$ are rational in $x$, $v_i(1/x)H(1/x)\in \mathbb{C}(1/x)^{fr}[[t]]$ and $v_i(x)H(x)\in \mathbb{C}(x)^{fr}[[t]]$ ($v_j(x)H(x)\in \mathbb{C}(x)^{fr}[[t]]$). \eqref{au 10b} is a scalar cRBVP and we can solve $v_i(x)H(x)$. This gives the second and third linear equations. Besides, eigenvectors can be calculated from the eigenvalues directly. The property of $v_i(x)$ is determined by the eigenvalues $\lambda_i(x)$ but not $m_{i0}(x)$ or $n_{i0}(x)$. Integrability depends on $\lambda_i(x)$. 

Actually, we do not need to choose $v_i(x)$ such that $n_{i0}=0$ or $n_{i1}=0$. If they are not zero, \eqref{au 10b} reads,
\begin{align}
\begin{split}
    v_i(1/x)H(1/x)^T=(m_{i0}(x)+m_{i1}(x)\sqrt{\Delta})v_i(x)H(x)^T+v_i(1/x)C(x)^T+n_{i0}v_1(x)H(x)^T\label{au 102}.
\end{split}
\end{align}
$v_1(x)H(x)^T=PR(x)$ is solved by the linear equation of the null space. It is a known term in this equation.

\begin{remark}
    The condition $\mu_1(x)=0$ is not essential. Recall \eqref{au6}. Take the case  $\lambda_1(x)=\lambda_1(1/x)$ as an example. $v_1(x)P_1(1/x)u_j(1/x)^T=0$ for $j= 2,3$. This means that $v_1(x)P_1(1/x)$ is simultaneously in the vector space spanned by $\{v_1(1/x),v_2(1/x)\}$ and $\{v_1(1/x),v_3(1/x)\}$. Thus, $v_1(x)P_1(1/x)=m_{11}(1/x)v_1(1/x)$. For systems with distinct eigenvalues, the pivotal condition for integrability is the field extension of $\lambda$. \eqref{au 102} is separable if and only if $\lambda\in \mathbb{C}(x,t)$.
\end{remark}

\begin{remark}
There is another simple idea to find a separable equation. Rewrite the matrix cRBVP as,
    \begin{align}
        H(1/x)^T=\left(P_0(x)-k(x)P_1(x)\right)H(x)^T+\left(\sqrt{\Delta}+k(x)\right)P_1(x)H(x)^T+C(x)^T.\label{6.9}
    \end{align}
If $P_1(x)\neq 0$, we can always find $k(x)$, such that $Det|\left(P_0(x)-k(x)P_1(x)\right)|=0$. If $k(x)$ is rational, then by \cref{lemma 1}, we find a separable equation.

Denote $v(1/x)$ as the left null space vector of $\left(P_0(x)-k(x)P_1(x)\right)$. We have,
\begin{align}
\begin{split}
    &v(1/x)P_0(x)=v(1/x)k(x)P_1(x).\label{more remark 1}
\end{split}
\end{align}

Apply $v(1/x)$ to the left of \eqref{au1}, \eqref{au2} and substitute \eqref{more remark 1} in, we have,
\begin{align}
\begin{split}
    &v(1/x)k(x)P_1(x)P_0(1/x)+\Delta v(1/x)P_1(x)P_1(1/x)=v(1/x)\\
    &v(1/x)k(x)P_1(x)P_1(1/x)+v(1/x)P_1(x)P_0(1/x)=0.
\end{split}
\end{align}
A linear combination shows,
\begin{align}
    (-k(x)^2+\Delta )v(1/x)P_1(x)P_1(1/x)=v(1/x).
\end{align}
Thus, $v(1/x)$ is an eigenvector of $P_1(x)P_1(1/x)$ with eigenvalue $\frac{1}{(-k(x)^2+\Delta)}$. Notice that $k(x)$ can be a square root while $\frac{1}{(-k(x)^2+\Delta)}$ is still rational. The result we find from the eigenspace of $P_0(x),P_1(x)$ is contained in the eigenspace of $P_0(x)P_0(1/x),P_1(x)P_1(1/x)$.
\end{remark}

\begin{comment}
\begin{remark}
 The case $\lambda_i$ are different reveals the case of quarter-plane walk with extra boundaries. If $v_i(x)$ are not rational functions, we currently cannot solve it. We probably need some further techniques to deal with this situation. 
\end{remark}
\end{comment}
\subsection{Jordan Case}\label{Jordan Case}
Now consider $\lambda_i(x)=\lambda_j(x)=\lambda(x)$. Let us first consider the case that the subspace of the eigenvalue $\lambda(x)$ is a Jordan block. There are row vectors $u_L(x),v_L(x)$ and column vectors $u_R(x)^T,v_R(x)^T$, such that,
\begin{align}
\begin{split}
    &u_L(x)P_0(1/x)P_0(x)=\lambda(x)u_L(x)+v_L(x)\\
    &v_L(x)P_0(1/x)P_0(x)=\lambda(x)v_L(x)\\
    &P_0(1/x)P_0(x)u_R(x)^T=\lambda(x)u_R(x)^T+v_R(x)^T\\
    &P_0(1/x)P_0(x)v_R(x)^T=\lambda(x)v_R(x)^T. \label{Jordan relation}
\end{split}
\end{align}
Multiply $u_L(x),v_L(x)$ and $u_R(x)^T,v_R(x)^T$ one by one to left and right of \eqref{au1}. Apply \eqref{Jordan relation}. Direct calculation shows that $v_L(x),v_R(x)^T$ are still the eigenvectors of $P_1(1/x)P_1(x)$ and $-u_L(x)^T$, $-u_R(x)^T$ are the corresponding vector of $P_1(1/x)P_1(x)\Delta(x)$. Namely,
\begin{align}
    -u_L(x)P_1(1/x)P_1(x)\Delta(x)=(1-\lambda)(-u_L(x))+v_L(x).
\end{align}
Multiply $u_L(x),v_R(x)^T$ simultaneously to the left and right of $P_0(1/x)P_0(x)$,
\begin{align}   
\lambda(x)u_L(x)v_R(x)^T+v_L(x)v_R(x)^T=u_L(x)P_0(1/x)P_0(x)v_R(x)^T=\lambda(x)u_L(x)v_R(x)^T\label{left right}.
\end{align}
The first equality is the result by acting $P_0(1/x)P_0(x)$ on the left eigenvector and the last equality is the result by acting $P_0(1/x)P_0(x)$ on the right eigenvector. \eqref{left right} shows $v_L(x)v_R(x)^T=0$. Multiply $v_L(x)$ to the left and $v_R(1/x)^T$ to the right of \eqref{au5} \eqref{au52}, and by similar calculation in \cref{factor lemma}, we have
\begin{align}
\begin{split}
(\lambda(x)-\lambda(1/x))v_L(x)P_1(1/x)v_R(1/x)^T=0\\
(\mu(x)-\mu(1/x))v_L(x)P_0(1/x)v_R(1/x)^T=0.\label{field extension restrict}
\end{split}
\end{align}
This again is an equation in the form of \eqref{au6}. We first assume $\lambda(x)\neq \lambda(1/x)$ and by the same discussion as \eqref{au 8}, \eqref{au 9}, we have $\lambda(x)=m_0(x)m_0(1/x)$. Thus, $\lambda(x)=\lambda(1/x)$, a contradiction. Again, multiply $u_L(x)$ and $v_R(1/x)^T$ to the left and right of \eqref{au5} and \eqref{au52}. Since $\lambda(x)=\lambda(1/x)$, we have
\begin{align}
\begin{split}
&\Big(\lambda(x)u_L(x)+v_L(x)\Big)P_1(1/x)v_R(1/x)^T-\lambda(x)u_L(x)P_1(1/x)v_R(1/x)^T=0\\
&\frac{1}{\Delta(x)}\Big((1-\lambda)u_L(x)-v_L(x)\Big)P_0(1/x)v_R(1/x)^T-\frac{1-\lambda}{\Delta(x)}u_L(x)P_0(1/x)v_R(1/x)^T=0.
\end{split}
\end{align}
So $v_L(x)P_1(1/x)v_R(1/x)^T=v_L(x)P_0(1/x)v_R(1/x)^T=0$. This gives exactly the same result as \eqref{au 8} and \eqref{au 9}. we still have $\lambda(x)=m_0(x)m_0(1/x)$ and,
\begin{align}
    \begin{split}
 &v_L(x)P_1(1/x)=m_1(x)v_L(1/x)+n_1(x)v_1(1/x)\\
 &v_L(x)P_0(1/x)=m_0(x)v_L(1/x)+n_1(x)v_1(1/x).
    \end{split}
\end{align}

We again get \eqref{au 10b} and solve $v_L(x)H(x)^T=PR_L(x)$. Although we have not found the third equation, two equations $v_L(x)H(x)^T=PR_L(x),v_1(x)H(x)^T=PR_1(x)$ are enough to solve the matrix cRBVP.
\begin{remark}
    The Jordan form case is the most complicated case in linear algebra, but it is the simplest case in our problem. Since $\lambda(x)$ is a multiple root of a characteristic polynomial in $\mathbb{C}(x)^{fr}[[t]]$, $\lambda(x)$ is also in $\mathbb{C}(x)^{fr}[[t]]$. Thus, each term in $v_L(x)$ is rational in $x$. We can always solve this subspace.
\end{remark}

\subsection{Algebraic Solutions in $\lambda_i=\lambda_j=1/4$ Case}\label{equal1}
The cases we discussed above are still following the idea of factorization. $v_2(1/x),v_3(1/x)$ are the same for $P_1(x),P_0(x)$. We reduce the $3\times 3$ matrix cRBVP into three independent scalar cRBVP and this is equivalent to matrix factorization. Now consider the case $\lambda_2(x)=\lambda_3(x)=\lambda(x)$ and the matrix is still diagonalizable. $v_2(1/x),v_3(1/x)$ span a two-dimensional subspace. If $\lambda(x)\neq \lambda(1/x)$, we can choose orthogonal base vectors $v_i(x)u_j(x)^T=0$. Similar argument as \eqref{au 10} in \cref{factor lemma} shows $\lambda(x)=m_0(x)m_0(1/x)$, a contradiction. Thus, $\lambda(x)=\lambda(1/x)=m_0(x)m_0(1/x)$.

In this case, we claim that there exist vectors $v(x)$ in this subspace such that $v(x)P_0(1/x)=m_0(x)v(1/x)$ but $v(x)P_1(1/x)\neq m_1(x)v(1/x)$. To construct such $v(x)$, first choose an arbitrary vector $v_3(x)$ in the subspace. In addition, we choose $v_3(x)$ rational in $x$. If $v_3(x)P_0(1/x)=m_0(1/x)v_3(x)$, it is the required $v(x)$. If not, we choose $v_2(x)$ by,
\begin{align}
    v_2(x)P_0(1/x)=k(1/x)v_3(1/x).\label{au 12}
\end{align}
We can always find $v_2(x)$ since $P_0(1/x)$ is full rank. Multiply $P_0(x)$ to the right of \eqref{au 12}, we have
\begin{align}
\lambda(x) v_2(x)=v_2(x)P_0(1/x)P_0(x)=k(1/x)v_3(1/x)P_0(x).
\end{align}
This shows,
\begin{align}
    v_3(x)P_0(1/x)=\frac{\lambda(x)}{k(x)}v_2(1/x)\label{au 13}.
\end{align}
We choose $k(x)$ such that $\lambda(x)=k(x)k(1/x)$. Then $\eqref{au 12}+\eqref{au 13}$ gives $v(x)=v_2(x)+v_3(x)$ and $k(x)=m_0(x)$. Thus, for an arbitrary $v_3(x)$ in the eigenspace of $P_0(1/x)P_0(x)$, we can find $v(x)$, 
\begin{align}
    v(x)=v_3(x)+\frac{1}{m_0(x)}v_3(1/x)P_0(x)\label{construction of v(x)}.
\end{align}
such that $v(x)P_0(1/x)=m_0(x)v(1/x)$ but $v(x)P_1(1/x)$ may not be equal to $m_1(x)v(1/x)$.

We further claim that everything is still rational in $x$. $v(x)$ is construct by a vector $v_3(x)$ rational in $x$, a matrix $P_0(1/x)$ rational in $x$ and $m_0(x)$. $m_0(x)$ is not determined to be rational. However, $\lambda(x)$ is rational and symmetric and $\lambda(x)=m_0(x)m_0(1/x)$. It factors by roots (for example, $x+1/x$ factored as $i(1-ix)(1-i/x)$). Thus, $m_0(x)$ is still rational in $x$.

\begin{comment}
\begin{remark}
    Notice the dimension of the eigenspace of $\lambda(x)$ is 2. We can find two independent $v(x),v'(x)$. A linear combination of these two vectors may give a vector $w(x)$ such that $w(1/x)P_0(x)=m_0(1/x)w(x)$ and $w(1/x)P_1(x)=m_1(1/x)w(x)$ hold simultaneously. However, unlike the different eigenvalues case, $w(x)$ is no longer rational in $x$. Since $w(x)$ is determined by $v(x),v'(x)$ through solving an extra automorphism relation. This does not guarantee the result is rational. While in $\lambda_i(x)\neq \lambda_j(x)$ or the Jordan form case, $v_2(x),v_3(x)$ are determined by the eigenvalues of $P_0(1/x)P_0(x)$ in advance. If we find eigenvalues rational in $x$, eigenvectors are also rational in $x$.
\end{remark}
\end{comment}

Now, we may solve $v(x)H(x)^T$. We currently can only solve the cases with special values of $\lambda$, namely $\lambda=1/4$ and its `extensions'. $\lambda=1/4$ is the case of the three-quadrant walks in \cref{three quarter} (also the walks in \cite{bousquet2016square,bousquet2021more,bousquet2023walks}) and we will clarify the meaning of extensions in \cref{more algebraic solutions}.

The way of solving $v(x)H(x)^T$ is exactly the same as solving $Hp_{-1}(x)$ in \cref{three quarter}. We go through it in a general framework. Multiple $v(1/x)$ to the matrix equation \eqref{independent},
\begin{align}
    v(1/x)H(x)^T=(v(1/x)P_0(x)+v(1/x)P_1(x)\sqrt\Delta(x))H(x)^T+v(1/x)C(x)^T\label{independent 2}.
\end{align}
Since $v(1/x)P_0(x)=m_0(1/x)v(x)$ and by suitable arrangements, the equation reads
\begin{align}
    v(1/x)H(1/x)^T-m_0(x)v(x)H(x)^T+C_1(x)=\sqrt{\Delta}\left(v(1/x)P_1(x)H(x)^T+C_2(x)\right)\label{quadratic 1},
\end{align}
where $C_1(x),C_2(x)$ are constant function in $x$.

 Without loss of generality, let us assume $f(x)$ is the solution of
\begin{align}
    f(1/x)-m_0(x)f(x)=C_1(x)\label{auto 1}.
\end{align}
This is a simple automorphism relation since $m_0(x)m_0(1/x)\neq 1$. We can solve it directly and $f(x)\in \mathbb{C}(x)^{fr}[[t]]$. Denote $A(x)=v(x)H(x)^T+f(x)$ and $B(x)=v(x)p_1(x)H(x)^T+C_2(x)$, \eqref{quadratic 1} has a extremely simply form,
\begin{align}
    A(1/x)-m_0(x)A(x)=\sqrt\Delta B(x)\label{quadratic 2}.
\end{align}
Square \eqref{quadratic 2} we have,
\begin{align}
    A(1/x)^2-2m_0(x)A(x)A(1/x)+m_0(x)^2A(x)^2=\Delta B(x)^2\label{quadratic 22}.
\end{align}
If $m_0(x)=1/2$ (or $-1/2$) as the three-quadrant walk cases, we take the $[x^<]$ part of this equation,
\begin{align}
    A(1/x)^2-[x^<]A(x)A(1/x)=C_3(1/x),\label{general n}
\end{align}
and apply $x\to 1/x$ to \eqref{general n},
\begin{align}
    A(x)^2-[x^>]A(x)A(1/x)=C_3(x).\label{general p}
\end{align}
Due to the $x\to 1/x$ symmetry of $A(x)A(1/x)$ and the trivial relation,
\begin{align}
[x^>]A(x)A(1/x)+[x^0]A(x)A(1/x)+[x^<]A(x)A(1/x)=A(x)A(1/x),    
\end{align}
we get,
\begin{align}
    A(x)^2-A(x)A(1/x)+A(1/x)^2=C_4(x)\label{quadratic 3}.
\end{align}
The operator $[x^<]$ is applicable since coefficient in \eqref{quadratic 22} is rational. We can either clear the denominator by multiplication or applying \cref{lemma 3}. $C_3(x)$ is a known function in $x$ and is obtained by taking the $[x^<]$ part of \eqref{quadratic 22}. $B(x)$ is eliminated since it does not involve formal series in $1/x$. $C_4(x)$ consists of $C_3(x)+C_3(1/x)$ and $[x^0]$ of $A(x)A(1/x),A(x^2),A(1/x)^2,\Delta B(x)^2$. It is also a known function of $x$ with some unknown functions in $t$.

Multiply $A(x)+A(1/x)$ to \eqref{quadratic 3}, we have,
\begin{align}
    A(x)^3+A(1/x)^3=C_4(x)(A(x)+A(1/x)).\label{cubicm1}
\end{align}
Again by taking $[x^>]$ degree term of this equation, we find a polynomial equation with one Catalytic variable,
\begin{align}
    A(x)^3=C_4(x)A(x)+C_5(x).\label{cubic}
\end{align}
$C_5(x)$ is the extra terms known in $x$ by taking $[x^>]$ of \eqref{cubicm1}. $A(x)$ is an unknown function in $x,t$. $C_4(x),C_5(x)$ contains unknown functions in $t$. \eqref{cubic} is a polynomial equation with one catalytic variable defined in \eqref{polynomial equation} and we shall use the general strategy in \cite{bousquet2006polynomial} to solve $A(x)$.

Recall that $v_3(x)$ is chosen arbitrarily in the construction. We can choose another $u_3(1/x)$ such that such that $u(1/x)P_0(x)=m'_0(x)u(x)$. The choice of $m'_0(x)$ is $\pm m_0(x),\pm m_0(1/x)$. Let $u(x)H(x)^T=F(x)$. Then $F(x)$ satisfies an equation in the same form as \eqref{cubic}.

Combining \eqref{const 1}, and two \eqref{cubic}, we got three independent equations,
\begin{align}
    \begin{split}
        &v_1(x)H(x)^T=PR_1(x)\\
        &P_1(v(x)H(x)^T,A_1,\dots A_i,x,t)=0\\
        &P_2(u(x)H(x)^T,F_1,\dots F_j,x,t)=0\label{alg set}.
    \end{split}
\end{align}
$A_1,\dots,A_i$ and $F_1,\dots,F_j$ are the unknown functions in $t$ appearing in the calculation. We find three independent algebraic equations with coefficients in $x,t,PR_1(x)$, one is linear and two are polynomial. $v(x),u(x),v_1(x)$ span the entire solution space. Thus, the matrix cRBVP \eqref{independent} is integrable in this case.

For the equation set to be exactly solvable, we still need to prove that there are enough algebraic roots $X_i$ for $P_1$ and $P_2$. This is a case-by-case calculation. However, the property in $x$ is determined by \eqref{alg set}. The only D-finite term in $x$ is $PR_1(x)$. So if $PR_1(x)=0$, the solution is algebraic in $x$. Recall the results of Kreweras, reveres Kreweras and Gessel's walk in \cite{beaton2019quarter,bousquet2016elementary,bousquet2005walks} and \cref{cor OS}, this is equivalent to the zero orbit-sum condition in lattice walk problems.

\subsection{More Algebraic Solutions}\label{more algebraic solutions}
Let us conclude the algebraic case we have found so far,
\begin{itemize}
    \item cyclotomic case: $m_0(x)=\pm 1/2, \lambda(x)=1/4$. We use the trick $(a+b)(a^2-ab+b^2)=a^3+b^3$ to separate $A(x),A(1/x)$. $A(x)$ satisfies an algebraic equation of degree $3$.
\end{itemize}
\begin{comment}
    \item $m_0(x)=\pm 1, \lambda(x)=1$. In this case, 
    $\mu(x)=0$. The equation reads,
    \begin{align}
        v(1/x)H(x)^T=\pm v(x)H(x)^T+v(1/x)C(x)^T
    \end{align}
    This is the null-space case. We can directly find a linear automorphism equation. Since $C(x)$ is not rational, taking $[x^>],[x^<]$ will give a linear equation with some non-algebraic terms.
    \begin{comment}
    We may also starts from \eqref{reflect} and take the square root
    \begin{align}
         A(1/x)\pm A(x)=\sqrt{C_3(x)+C_3(1/x)+C_0}
    \end{align}
    This also solves the equation. The sign of square root is chosen to fit the sign of left hand-side. Due to the square root on right hand-side, this equation contains non-algebraic terms. Further $f(x)$ inside $A(x)=v(x)H(x)+f(x)$ is also none algebraic since \eqref{auto 1} is not a simple automorphism relation but
    \item $\lambda(x)=0$. This is the case we met in \cref{p0}.    
    \begin{align}
        v(1/x)H(x)=\pm v(x)H(x)+v(1/x)C(x)^T
    \end{align}
    Square this equation directly and separate the $[x^>],[x^<]$ part, we get an algebraic equation in $A(x)$ or $B(x)$ with degree $2$.
      \end{comment}
The trick $(a+b)(a^2-ab+b^2)=a^3+b^3$ is not a unique property of the cyclotomic polynomial.

Recall \eqref{quadratic 22}. Multiply $m_0(1/x)$ to the equation, it reads,
\begin{align}
    m_0(1/x)A(1/x)^2-2m_0(x)m_0(1/x)A(x)A(1/x)+m_0(1/x)m_0(x)^2A(x)^2=m_0(1/x)\Delta B(x)^2\label{reflect}
\end{align}
If we take $[x^<]$ terms of \eqref{reflect} and apply the $x\to 1/x$ symmetry of $A(x)A(1/x)$ as we did in $\lambda=1/4$ case, we have,
\begin{align}
    m_0(1/x)A(1/x)^2-2m_0(x)m_0(1/x)A(x)A(1/x)+m_0(x)A(x)^2=C_4(x)\label{reflect 3},
\end{align}
We may solve $A(1/x)$ directly,
\begin{align}
    \begin{split}
        \frac{A(1/x)}{m_0(x)}=A(x)\pm\sqrt{\left(1-\frac{1}{m_0(1/x)m_0(x)}\right)A(x)^2+\frac{C_4(x)}{m_0(1/x)m_0(x)^2}}
    \end{split}.
\end{align}
Let us denote the solution as,
\begin{align}
    \frac{F(1/x)}{m_0(x)}=F(x)+ \epsilon\sqrt{kF(x)^2+C}\label{F}
\end{align}
$\epsilon=\pm 1$ and $(kF(x)^2+C)$ has argument in $(0,\pi)$. $k=1-1/\lambda(x)$. $C$ is rational in $x$. Now consider the module space spanned by $\{F(1/x),F(1/x)^1,\dots F(1/x)^n\}$ with coefficients in $\mathbb{C}(x)^{fr}[[t]]$,
\begin{align}
    \begin{split}
        &\frac{F(1/x)}{m_0(x)}=F(x)+ \epsilon\sqrt{kF(x)^2+C}\\
        &\left(\frac{F(1/x)}{m_0(x)}\right)^2=F(x)^2+ 2\epsilon F(x)\sqrt{kF(x)^2+C}+\epsilon^2(kF(x)^2+C)\\
        &\left(\frac{F(1/x)}{m_0(x)}\right)^3=F(x)^3+ 3\epsilon F(x)^2\sqrt{kF(x)^2+C}+ 3\epsilon^2 F(x)(kF(x)^2+C)+\epsilon^3(kF(x)^2+C)\sqrt{kF(x)^2+C}\\
        &\dots\\
        &\left(\frac{F(1/x)}{m_0(x)}\right)^n=F(x)^n+ \binom{n}{n-1}\epsilon F(x)^{n-1}\sqrt{kF(x)^2+C}\\
        &+ \binom{n}{n-2}\epsilon^2F(x)^{n-2}(kF(x)^2+C)+ \binom{n}{n-3}\epsilon^3F(x)^{n-3}(kF(x)^2+C)\sqrt{kF(x)^2+C}+\dots
    \end{split}.
\end{align}
The right hand-side of $\left(\frac{F(1/x)}{m_0(x)}\right)^n$ is spanned by $\{F(x)^i,F(x)^j\sqrt{kF(x)^2+C}\}$ and $i\leq n, j\leq n-1$. A general module generated by $\left(\frac{F(1/x)}{m_0(x)}\right)^n$ reads,
\begin{align}
P_1(F(1/x),x,t)=P_2(F(x),x,t)+P_3(F(x),x,t)\sqrt{kF(x)^2+C},
\end{align}
$P_1,P_2,P_3$ are three polynomials. In addition, comparing \eqref{F} and \eqref{quadratic 2}, we have \[\sqrt{\Delta}B(x)=\epsilon m_0(x)\sqrt{kF(x)^2+C}.\] There are two ways to find a separable equation.
\begin{enumerate}
    \item If the module spanned by $F(1/x)$ is spanned by $F(x)$ without $\sqrt{kF(x)^2+C}$, then $P_3(F(x),x,t)=0$. We can separate the $[x^>]$ and $[x^<]$ of the following equation,
\begin{align}
P_1(F(1/x),x,t)=P_2(F(x),x,t).
\end{align}
 Notice that by a linear combination, the lower degree terms, $F(x)^{j-1}\sqrt{kF(x)^2+C}$ with $j<n$ on the right hand-side of $\left(\frac{F(1/x)}{m_0(x)}\right)^n$, can be eliminated by the right hand-side of $\left(\frac{F(1/x)}{m_0(x)}\right)^j$, $j<n$. $F(x)^{n-1}\sqrt{kF(x)^2+C}$ only appears on the right hand-side of $\left(\frac{F(1/x)}{m_0(x)}\right)^n$.  Thus, $P_3(F(x),x,t)=0$ indicates that the coefficient of $F(x)^{n-1}\sqrt{kF(x)^2+C}$ equals $0$ for some $n$. Direct calculation shows, if $n=2N$.
\begin{align}
     \epsilon\binom{2N}{2N-1}+\epsilon^3\binom{2N}{2N-3}k+\epsilon^5\binom{2N}{2N-5}k^2\dots+\epsilon^{2N-1}\binom{2N}{1} k^{N-1}=0\label{1}.
\end{align}
If $n=2N-1$, we have,
\begin{align}
     \epsilon\binom{2N-1}{2N-2}+\epsilon^3 \binom{2N-1}{2N-4}k+\epsilon^5\binom{2N-1}{2N-6}k^2\dots+\epsilon^{2N-1}\binom{2N-1}{0} k^{N-1}=0\label{2}.
\end{align}

The solution is irrelevant to the choice of $\epsilon$ since $\epsilon^2=1$. If $n=3$, the equation reads $\binom{3}{1}+\binom{3}{3}k=0$. We immediately have $\lambda=1/4$, which is the case of three-quadrant walks.

    \item If the vector space spanned by $F(1/x)^n$ is spanned by $F(x)^j\sqrt{kF(x)^2+C}$, then,  
    
    $P_2(F(x),x,t)=0$. Due to the relation $\sqrt{\Delta}B(x)=\epsilon m_0(x)\sqrt{kF(x)^2+C}$, we have,
\begin{align}
P_1(F(1/x),x,t)=\sqrt{\Delta}P_3(F(x),x,t)\frac{B(x)}{\epsilon m_0(x)}.
\end{align}
$\Delta$ is a rational function and $\sqrt{\Delta}$ can be canonically factorized as,
\begin{align}
\sqrt{\Delta}=\sqrt{\Delta_0}\sqrt{\Delta_+}\sqrt{\Delta_-}=\sqrt{\Delta_0}x^{n/2}\prod_i(\sqrt{1-x/X_i})\prod_{j}(\sqrt{1-X_j/x}),
\end{align}
where $X_i$ are roots in $\mathbb{C}(x)^{fr}[[t]]$ and $X_j$ are roots such that $1/X_j\in \mathbb{C}(x)^{fr}[[t]]$. Then,
\begin{align}
\frac{\epsilon m_0(x)}{\sqrt{\Delta_-}}P_1(F(1/x),x,t)=\sqrt{\Delta_+\Delta_0}P_3(F(x),x,t)B(x).
\end{align}
is a suitable form for taking $[x^>]$ and $[x^<]$ series.

To achieve this, $P_2(F(x),x,t)=0$ indicates that the coefficient of $F(x)^n$ equals $0$ for some $n$. Similar argument as the previous case shows, if $n=2N$, we have,
\begin{align}
     \binom{2N}{2N}+\epsilon^2\binom{2N}{2N-2}k+\epsilon^4\binom{2N}{2N-4}k^2\dots+\epsilon^{2N}\binom{2N}{0} k^{N}=0\label{3}.
\end{align}
If $n=2N-1$, we have,
\begin{align}
     \binom{2N-1}{2N-1}+\epsilon^2 \binom{2N-1}{2N-3}k+\epsilon^4\binom{2N-1}{2N-5}k^2\dots+\epsilon^{2N-2}\binom{2N-1}{1} k^{N-1}=0\label{4}.
\end{align}

The solution is irrelevant to the choice of $\epsilon$. If $n=3$, the equation reads $\binom{3}{3}+\binom{3}{1}k=0$. We have $\lambda=-3/4$. 
\end{enumerate}
For $n=2N-1$, the solution $\hat{k}$ in case $2$ is $1/\hat{k}$ in case $1$. For $n=2N$, the solutions of these two cases are not related.

Thus, for all solutions $\hat{k}$ of \eqref{1}, \eqref{2}, \eqref{3}, \eqref{4}, we can find a polynomial equation in $F(x), F(1/x)$ and these two unknown functions are separated.  Taking $[x^<]$ will give a polynomial equation of $F(1/x)$ with algebraic coefficients in $x$. We find a polynomial equation in one catalytic variable for $F(x)$.

The choice of $F(x)$ is still arbitrary. Recall \eqref{construction of v(x)}. We can construct two independent vectors $v(x),u(x)$ and $v(x)H(x)^T,u(x)H(x)^T$ both satisfy a polynomial equation in one catalytic variable. So the cRBVP \eqref{independent} is integrable for $k=\hat{k}$.

\subsection{Algebraic Properties in General $\lambda_i=\lambda_j$ Case}\label{algebra equal case}
If we do not assume any special values of $\lambda$, we may see some algebraic properties. Again recall \eqref{reflect},
\begin{align}
    m_0(1/x)A(1/x)^2-2m_0(x)m_0(1/x)A(x)A(1/x)+m_0(1/x)m_0(x)^2A(x)^2=m_0(1/x)\Delta B(x)^2\label{reflect recall}.
\end{align}
Apply $x\to 1/x$,
\begin{align}
    m_0(x)A(x)^2-2m_0(x)m_0(1/x)A(x)A(1/x)+m_0(x)m_0(1/x)^2A(1/x)^2=m_0(x)\Delta B(1/x)^2.
\end{align}
Then, eliminate $2m_0(x)m_0(1/x)A(x)A(1/x)$ by linear combination,
\begin{align}
\begin{split}
    m_0(x)A(x)^2&-m_0(1/x)m_0(x)^2A(x)^2+m_0(1/x)\Delta B(x)^2\\
    =
    &m_0(x)\Delta B(1/x)^2-m_0(x)m_0(1/x)^2A(1/x)^2+ m_0(1/x)A(1/x)^2.\label{quadratic 5}
\end{split}
\end{align}
Despite some rational factors, this is still an equation such that the unknown functions of the formal power series in $x$ and the formal power series in $1/x$ are separated. Take the $[x^>]$ part of this equation, we have
\begin{align}
    m_0(x)A(x)^2-m_0(1/x)m_0(x)^2A(x)^2&+m_0(1/x)\Delta B(x)^2=C_6(x)\label{quadratic 4}.
\end{align}
$C_6(x)$ is the remaining known function in $x$ after taking the $[x^>]$ degree terms. This is an algebraic function of degree $2$ and gives a relation between $H_1(x),H_2(x),H_3(x)$. We cannot find another polynomial equation for them. $v(x)$ is chosen arbitrarily. However, if we choose another independent $u(x)$, we will get the same result as \eqref{quadratic 4}. This is because the equation of $x$ and the equation with $x\to 1/x$ span the whole subspace. \eqref{quadratic 4} is the algebraic property of this subspace.

\subsection{$\lambda_i\neq \lambda_j$ Case Revisit}\label{revisit}
Let us return to the $\lambda_i\neq \lambda_j$ case. If the eigenvalues are rational in $x$, \eqref{au 10a} is a separable linear equation. If the eigenvalues contain square roots, we cannot separate the $[x^>],[x^<]$ part. But still, there are some algebraic properties as \eqref{quadratic 4} in the $\lambda_i= \lambda_j$ case.

Suppose that there is a square root term $\sqrt{\delta}$ introduced in the eigenvalue $\lambda_i(x)$, then all $v_i(x)$, $m_{i0}(x),m_{i1}(x)$ contain $\sqrt{\delta}$ by direct calculations. In addition, since $\lambda_j(x)$ and $\lambda_i(x)$ are two conjugated roots of a quadratic polynomial, $v_i(x)$ and $v_j(x)$, $m_{i0}(x)$ and $m_{j0}(x)$ are all conjugate to each other.

Denote the Galois transform as $\sigma (\sqrt{\delta})\to -\sqrt{\delta}$. We first consider the case $\lambda_i(x)=\lambda_i(1/x)$. The following change of variables makes the equation simpler,
\begin{align}
    \begin{split}
        &v_i(1/x)H(1/x)^T=R(1/x)+I(1/x)\sqrt{\delta}\\
        &m_{i0}(x)=a(x)+b(x)\sqrt{\delta}\\
        &m_{i1}(x)=s(x)+r(x)\sqrt{\delta}\\
        &v_i(1/x)C(x,t)^T=J_1+J_2\sqrt{\delta}+J_3\sqrt{\Delta}+J_4\sqrt{\delta}\sqrt{\Delta}.
    \end{split}
\end{align}
$R(1/x),I(1/x)$ are the unknown functions. All known terms, $a(x),b(x),r(x),s(x)$, $\delta,\Delta$, $J_1,J_2,J_3,J_4$ are rational in $x$. Due to the existence of $\sqrt{\delta}$, the $[x^>]$ and $[x^<]$ parts of \eqref{au 10a} cannot be separated.

We construct a polynomial equation like \eqref{quadratic 5}. We first deal with the term $v_i(1/x)C(x,t)^T$. Consider the following equation,
\begin{align}
\begin{split}
    &F(1/x)+G(1/x)\sqrt{\delta}=\\
    &\Big((a(x)+b(x)\sqrt{\delta})+(s(x)+r(x)\sqrt{\delta})\sqrt\Delta\Big)\left(f(x)+g(x)\sqrt{\delta}\right)-\left(J_1+J_2\sqrt{\delta}+J_3\sqrt{\Delta}+J_4\sqrt{\delta}\sqrt{\Delta}\right)\label{change 1}.
\end{split}
\end{align}
If we want rational solutions of $F(1/x),G(1/x),f(x),g(x)$, we shall choose $f(x),g(x)$ such that coefficients of $\sqrt{\Delta}$ vanishes. This leads to the following equation,
\begin{align}
\left(s(x)+r(x)\sqrt{\delta}\right)\left(f(x)+g(x)\sqrt{\delta}\right)=(J_3+J_4\sqrt{\delta}).
\end{align}
This is equivalent to,
\begin{align}
    \left(
\begin{array}{c}
 J_3  \\
 J_4 \\
\end{array}
\right)
=
    \left(
\begin{array}{cc}
 s(x) &   r(x)\delta \\
 r(x) & s(x) \\
\end{array}
\right)
    \left(
\begin{array}{c}
 f(x)  \\
 g(x)  \\
\end{array}
\right).
\end{align}
This is a linear equation set and the determinant is $s(x)^2-\delta r(x)^2=m_{i1}(x)\overline{m_{i1}(x)}\neq 0$. Thus, the solutions $f(x),g(x)$ are rational in $x$. Substituting $f(x),g(x)$ into \eqref{change 1} and collecting the rational terms and the $\sqrt{\delta}$ terms, we solve $F(1/x),G(1/x)$.

Substitute the solution of \eqref{change 1} into \eqref{au 10a}, we have,
\begin{align}
\begin{split}
    &(R(1/x)+F(1/x))+(I(1/x)+G(1/x))\sqrt{\delta}\\
    &=\Big((a(x)+b(x)\sqrt{\delta})+(s(x)+r(x)\sqrt{\delta})\sqrt\Delta\Big)\left((R(x)+f(x))+(I(x)+g(x))\sqrt{\delta}\right)\label{change 2}.
\end{split}
\end{align}
This is the equation of $v_i(x)$. The equation of $v_j(x)$ is the conjugate of \eqref{change 2}
\begin{align}
\begin{split}
    &(R(1/x)+F(1/x))-(I(1/x)+G(1/x))\sqrt{\delta}\\
    &=\Big((a(x)-b(x)\sqrt{\delta})+(s(x)-r(x)\sqrt{\delta})\sqrt\Delta\Big)\left((R(x)+f(x))-(I(x)+g(x))\sqrt{\delta}\right)\label{change 3}.
\end{split}
\end{align}
The product of these two equations gives,
\begin{align}
    (R(1/x)+F(1/x))^2-(I(1/x)+G(1/x))^2\delta=S(x,\sqrt{\Delta})\left((R(x)+f(x))^2-(I(x)+g(x))^2\delta\right)\label{change 4}.
\end{align}
\eqref{change 4} is an equation such that the $[x^>]$ and $[x^<]$ degree terms can be separated. We shall factor $S(x,\sqrt{\Delta})=S_+(x)S_-(1/x)$. Then, we get two equations in the form,
\begin{align}
    \begin{split}
        &(R(1/x)+F(1/x))^2-(I(1/x)+G(1/x))^2\delta=NS(1/x)\\
        &(R(x)+f(x))^2-(I(x)+g(x))^2\delta=PS(x).\label{qudratic Dfinite}
    \end{split}
\end{align}
Since the original equation set is equivalent under the transformation $x\to1/x$, we shall have  $F(1/x)=f(1/x)$,  $G(1/x)=g(1/x)$ and $NS(1/x)=PS(1/x)$. This is an algebraic relation between $R(x)$ and $I(x)$ with some non-algebraic coefficients $PS(x)$. However, we currently cannot find another one.

If we denote $R(x)+f(x)\to R(x)$ and $I(x)+g(x)\to I(x)$, \eqref{change 2} and \eqref{change 3} provides a matrix cRBVP in the two dimensional subspace,
\begin{align}
    \left(
\begin{array}{c}
 R(1/x)  \\
 I(1/x) \\
\end{array}
\right)
=
    \left(
\begin{array}{cc}
 a(x) &   b(x)\delta \\
 b(x) & a(x) \\
\end{array}
\right)
    \left(
\begin{array}{c}
 R(x)  \\
 I(x)  \\
\end{array}
\right)+\sqrt{\Delta}
    \left(
\begin{array}{cc}
 s(x) &   r(x)\delta \\
 r(x) & s(x) \\
\end{array}
\right)
    \left(
\begin{array}{c}
 R(x)  \\
 I(x)  \\
\end{array}
\right)\label{change f}.
\end{align}
For \eqref{factor lemma 1} case in \cref{factor lemma}, the results are similar. We only need to change the sign of $\sqrt{\delta}$ on the right hand-side of \eqref{change 3} and \eqref{change 4}. \eqref{qudratic Dfinite} remains the same. The cRBVP in the subspace reads,
\begin{align}
    \left(
\begin{array}{c}
 R(1/x)  \\
 I(1/x) \\
\end{array}
\right)
=
    \left(
\begin{array}{cc}
 a(x) &   b(x)\delta \\
 -b(x) & -a(x) \\
\end{array}
\right)
    \left(
\begin{array}{c}
 R(x)  \\
 I(x)  \\
\end{array}
\right)+\sqrt{\Delta}
    \left(
\begin{array}{cc}
 s(x) &   r(x)\delta \\
 -r(x) & -s(x) \\
\end{array}
\right)
    \left(
\begin{array}{c}
 R(x)  \\
 I(x)  \\
\end{array}
\right).
\end{align}

\begin{remark}
     \eqref{quadratic 4} in the $\lambda_i=\lambda_j$ case plays a similar role as \eqref{qudratic Dfinite} here. 
\eqref{quadratic 4} gives an algebraic relation between $A(x)^2$ and $B(x)^2$, which are equivalent to $R(x),I(x)$ in \eqref{qudratic Dfinite}. We can find a $2\times 2$ matrix cRBVP in the subspace of $(A(x),B(x))$ as well.
\end{remark}

\subsection{Conclusion and Some Discussion for General $n\times n$ Matrix}\label{conclude}
In previous sections, we consider the case of $3\times 3$ matrix cRBVP in the form of \eqref{independent} with condition $Det|P_1(x)|=0$, let us summarize all the integrable cases.
\begin{enumerate}
    \item $\lambda_1=1$ and $\mu_1=0$. We have a linear equation of $\{H_1(x),H_2(x),H_3(x)\}$ with rational coefficients in $x$ and some D-finite $PR_1(x)$ known in $x$.
    \item When $\lambda_2\neq \lambda_3$ and $\lambda_2,\lambda_3$ are rational in $x$, we can find the other two linear equations for $\{H_1(x),H_2(x),H_3(x)\}$ with some D-algebraic coefficients \cite{xu2022interacting} due to the canonical factorization.
    \item When $\lambda_2=\lambda_3$ and the subspace is not diagonalizable, we can also find the other two linear equations of $\{H_1(x),H_2(x),H_3(x)\}$ with some D-algebraic coefficients. 
    \item If $\lambda_2=\lambda_3$ with diagonalizable subspace, and if $k=1-1/\lambda$ satisfies any of the equations \eqref{1},\eqref{2},\eqref{3},\eqref{4}, we find two polynomial equation with one catalytic variable for some linear combination of $\{H_1(x),H_2(x),H_3(x)\}$ with polynomial coefficients.
\end{enumerate}
The integrability of the matrix cRBVP depends on the field extension of $\lambda(x)$. The algebraic property appears in the subspace of conjugate roots. They are determined by the irreducible factor of the characteristic polynomial of either $P_0(x)P_0(1/x)$ or $P_1(x)P_1(1/x)$.

Thus, for general $n\times n$ matrices, we shall consider the characteristic polynomial,
\begin{align}
   \lambda^k\prod_i(\lambda-a_i(x))\times\prod_j(\lambda^2+b_{j1}(x)\lambda+b_{j0}(x))\times\dots\prod_l(\lambda^m+b_{l(m-1)}(x)\lambda^{m-1}+\dots b_{l0}(x))=0\label{character}.
\end{align}
The methodology developed in \cref{RBVP} naturally extends to any irreducible factors of \eqref{character}.

Recall \eqref{au6},
\begin{align}
     (\lambda_i(x)-\lambda_j(1/x))v_i(x)P_1(1/x)u_j(1/x)^T=0.
\end{align}
Due to the $x\to 1/x$ symmetry of the characteristic polynomial, if $\lambda_i(x)$ is not a multiple root, either $\forall j, \lambda_i(x)\neq \lambda_j(1/x)$ and $v_i(x)P_1(1/x)=m_i(1/x)v_i(1/x)$ or $\exists! j,\lambda_i(x)= \lambda_j(1/x)$ and $v_i(x)P_1(1/x)=m_j(1/x)v_j(1/x)$. For subspace of distinct linear factors or Jordan case, we apply the techniques in \cref{different eigenvalues} and \cref{{Jordan Case}} and solve the subspace. For roots conjugated under the Galois group, their corresponding conjugated equations have the form of \eqref{change 2}. The product of all these conjugated equations gives a polynomial equation in the form of \eqref{qudratic Dfinite}.  Four subspaces of multiple roots, \eqref{construction of v(x)} holds. If the multiple roots are from linear factors, for example, $(\lambda-a)^2$, we can apply the discussion in \cref{equal1}, \cref{more algebraic solutions} and \cref{algebra equal case} to find the algebraic structures. Multiple roots from non-linear factors such as $(\lambda^2-a\lambda+b)^2$ require novel techniques beyond current scope.

\section{D-finite Case with Vanished Full Orbit-Sum}\label{orbit example}
In the final part of this paper, we show how the theory of matrix cRBVP applies to some criteria in previous studies.

In the study of quarter-plane lattice walk problems, it is always conjectured that if the orbit sum is $0$, the solution is algebraic. In \cref{RBVP}, we have proved that the orbit-sum is characterized by the null space $v_L(1/x)$ and \eqref{automorphism relation 11}. This equation is a linear equation with an extra term $v_L(1/x)C(x)^T$. If $v_L(1/x)C(x)^T$ is rational, then $[x^0]v_L(1/x)C(x)^T$ is algebraic. In lattice walk problems, we need to have $v_L(1/x)C(x)^T=0$ (since $C(x)^T$ comes from $[y^0]\frac{xy}{K(x,y)}$). In \cite{buchacher2020quadrant}, the author discovered a special lattice walk model whose orbit is zero while the solution is not algebraic. We analyze this model in the cRBVP framework and show what happens in this case.
\subsection{Walks Starting Outside the Quadrant}
In \cite{buchacher2020quadrant}, the author considered a model with allow steps $\{\leftarrow,\rightarrow,\uparrow,\downarrow\}$ in the whole $(i,j)$ plane with restrictions on $i>0$ axis and $j>0$ axis. On the $j>0$ axis, $\leftarrow$ is not allowed and on the $i>0$ axis, $\downarrow$ is not allowed. If we denote $f_{ijn}$ as the number of configuration of n-step paths starting from the point $(-1,-1)$, ending at $(i,j)$, then the generating function $F(x,y,t)=\sum_{i,j,n\geq 0}f_{ijn}x^iy^jt^n$ satisfies the following equation,
\begin{align}
    (1-(x+y+\bar{x}+\bar{y}))F(x,y,t)=\bar{x}\bar{y}-\bar{x}t[y^\geq]F(0,y,t)-\bar{y}t[x^\geq]F(x,0,t).
\end{align}
 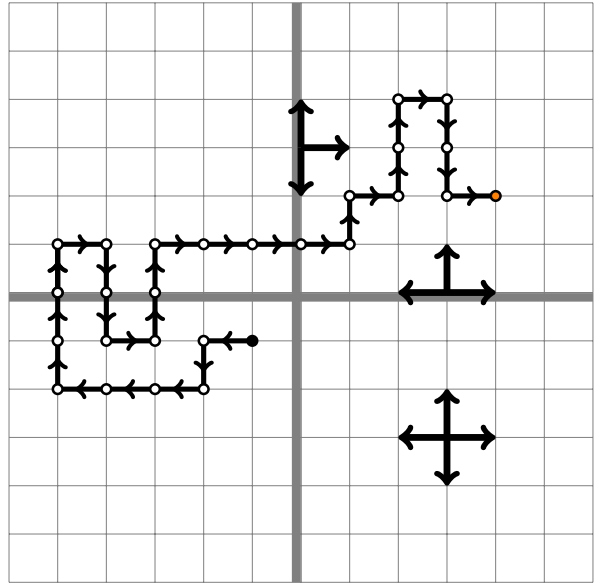
\begin{figure}
\centering
\resizebox{0.5\textwidth}{!}{
\begin{tikzpicture}
\tikzset{12/.style={circle, line width=1.5pt, draw=black, fill=green, inner sep=2pt}}
\tikzset{1/.style={circle, line width=1.5pt, draw=black, fill=white, inner sep=2pt}}
\tikzset{2/.style={circle, line width=1.5pt, draw=red, fill=white, inner sep=2pt}}
\tikzset{1x/.style={circle, line width=1.5pt, draw=black, fill=orange, inner sep=2pt}}
\tikzset{12x/.style={circle, line width=1.5pt, draw=black, fill=black, inner sep=2pt}}
\draw [gray, line width=5pt]  (-6,-0.1) -- (6,-0.1);
\draw [gray, line width=5pt]  (-0.1,-6) -- (-0.1,6);
\draw[step=1cm,gray,very thin] (0,0) grid (6,6);
\draw[step=1cm,gray,very thin] (0,0) grid (-6,-6);
\draw[step=1cm,gray,very thin] (0,0) grid (6,-6);
\draw[step=1cm,gray,very thin] (0,0) grid (-6,6);
\begin{scope}[line width=3pt, decoration={markings,mark=at position 0.65 with {\arrow{>}}}]
\draw [postaction=decorate] (-1,-1) node [12x] {} -- (-2,-1);
\draw [postaction=decorate] (-2,-1) node [1] {} -- (-2,-2);
\draw [postaction=decorate] (-2,-2) node [1] {} -- (-3,-2);
\draw [postaction=decorate] (-3,-2) node [1] {} -- (-4,-2);
\draw [postaction=decorate] (-4,-2) node [1] {} -- (-5,-2);
\draw [postaction=decorate] (-5,-2) node [1] {} -- (-5,-1);
\draw [postaction=decorate] (-5,-1) node [1] {} -- (-5,0);
\draw [postaction=decorate] (-5,0) node [1] {} -- (-5,1);
\draw [postaction=decorate] (-5,0) node [1] {} -- (-5,1);
\draw [postaction=decorate] (-5,1) node [1] {} -- (-4,1);
\draw [postaction=decorate] (-4,1) node [1] {} -- (-4,0);
\draw [postaction=decorate] (-4,0) node [1] {} -- (-4,-1);
\draw [postaction=decorate] (-4,-1) node [1] {} -- (-3,-1);
\draw [postaction=decorate] (-3,-1) node [1] {} -- (-3,0);
\draw [postaction=decorate] (-3,0) node [1] {} -- (-3,1);
\draw [postaction=decorate] (-3,1) node [1] {} -- (-2,1);
\draw [postaction=decorate] (-2,1) node [1] {} -- (-1,1);
\draw [postaction=decorate] (-1,1) node [1] {} -- (0,1);
\draw [postaction=decorate] (0,1) node [1] {} -- (1,1);
\draw [postaction=decorate] (1,1) node [1] {} -- (1,2);
\draw  [postaction=decorate] (1,2) node [1] {} -- (2,2);
\draw  [postaction=decorate] (2,2) node [1] {} -- (2,3);
\draw  [postaction=decorate] (2,3) node [1] {} -- (2,4);
\draw  [postaction=decorate] (2,4) node [1] {} -- (3,4);
\draw  [postaction=decorate] (3,4) node [1] {} -- (3,3);
\draw  [postaction=decorate] (3,3) node [1] {} -- (3,2);
\draw  [postaction=decorate] (3,2) node [1] {} -- (4,2);
\draw  (4,2) node [1x] {};

\draw [line width=4pt, ->] (3,-3) -- (3,-2);
\draw [line width=4pt, ->] (3,-3) -- (3,-4);
\draw [line width=4pt, ->] (3,-3) -- (2,-3);
\draw [line width=4pt, ->] (3,-3) -- (4,-3);

\draw [line width=4pt, ->] (3,0) -- (3,1);
\draw [line width=4pt, ->] (3,0) -- (4,0);
\draw [line width=4pt, ->] (3,0) -- (2,0);

\draw [line width=4pt, ->] (0,3) -- (0,2);
\draw [line width=4pt, ->] (0,3) -- (0,4);
\draw [line width=4pt, ->] (0,3) -- (1,3);

\end{scope}
\end{tikzpicture}
}\caption{A example of walk starting outside the quadrant.}
\end{figure}

The kernel $K(x,y)=1-(x+y+\bar{x}+\bar{y})$. It has two roots in $y$,
\begin{small}
\begin{align}
    \begin{split}
        &Y_0(x)=\frac{-\sqrt{\left(-t(x+\bar{x}))+1\right)^2-4 t^2 }-t(x+\bar{x}))+x}{2 t }=t+(x+\bar{x})t^2+(3+x^2+\bar{x}^2)t^3+O(t^4)\\
       &Y_1(x)=\frac{\sqrt{\left(-t(x+\bar{x}))+1\right)^2-4 t^2 }-t(x+\bar{x}))+x}{2 t }=\frac{1}{t}-(x+\bar{x})-t-(x+\bar{x})t^2-(3+x^2+\bar{x}^2)t^3+O(t^4),
    \end{split}
\end{align}
\end{small}
and 
\begin{align}
Y_0(x)Y_1(x)=1, \qquad Y_0(x)+Y_1(x)=x+\bar{x}-1/t.
\end{align}
The symmetry group of $(x,y)$ reads,
\begin{align}
(x,y)\to (1/x,y)\to (1/x,1/y)\to (x,1/y).
\end{align}
It is straight forward to check the orbit sum,
\begin{align}
    xyF(x,y,t)-\bar{x}yF(\bar{x},y,t)+\bar{x}\bar{y}F(\bar{x},\bar{y},t)-x\bar{y}F(x,\bar{y},t)=0\label{out full orbit sum}.
\end{align}
The author use computer experiments to check the properties of $F(1,1,t)$ and conjecture $F(x,y,t)$ is not D-finite. Let us solve this model using the matrix cRBVP.
\subsection{Matrix cRBVP for Walks Starting Outside the Quadrant}\label{outside the quadrant}
The first step is still to consider how many linearly independent equations we need for the problem. We use the same notation as in \cref{Model}. $H$ refers to horizontal and $p,n$ refers to positive and negative. It is not difficult to see that $Hp(x),Hp_{-1}(x)$ generate all functions in the domain $i\geq 0,j\geq 0$. $Hp_{-1}(x)$ itself generate all functions in the domain $i\geq 0,j<0$. $Hn(1/x),Hn_{-1}(1/x)$ generate all functions in $i<0$. Thus, the unknown vector in the matrix cRBVP is chosen as $H(x)=\left(Hn_{-1}(x),Hp_{-1}(x),Hn(x),Hp(x)\right)^T$. We need to find relations between these four generating functions.

Consider the lower half-plane. Denote the generating function of paths ending in the lower half plane as $L(x,y)$, it satisfies,
\begin{align}
    (1-t(x+y+\bar{x}+\bar{y}))L(x,\bar{y})=\bar{x}\bar{y}+t\bar{y}Hn(\bar{x})-tHn_{-1}(\bar{x})-tHp_{-1}(x)\label{out 1}.
\end{align}
$L(x,\bar{y})$ is a formal series in $\bar{y}$. So if we substitute $y=1/Y_0(x)$ into \eqref{out 1}, we have,
\begin{align}
    \bar{x}Y_0(x)+tHn(\bar{x})Y_0(x)-t Hn_{-1}(\bar{x})-tHp_{-1}(x)=0.\label{out 2}
\end{align}
Apply the symmetric transformation $x\to1/x$, we have,
\begin{align}
    xY_0(x)+tHn(x)Y_0(x)-t Hn_{-1}(x)-tHp_{-1}(\bar{x})=0.\label{out 3}
\end{align}
We get two equations.

Then, consider the paths ending in the first quadrant. Denote the generating function of paths ending in the first quadrant as $Ur(x,y)$. It satisfies a functional equation,
\begin{align}
    (1-t(x+y+\bar{x}+\bar{y}))Ur(x,y)=t\bar{y}Hp(x)-t\bar{x}Vp(x)+tHp_{-1}(x)+tVp_{-1}(y)\label{out 4}.
\end{align}
$Ur(x,y)$ is a formal series in $x,y$. It is suitable to substitute $y=Y_0(x)$ into \eqref{out 4}. We consider the two pairs $(x,Y_0(x))$ and $(\bar{x},Y_0(x))$,
\begin{align}
    \begin{split}
        &-\frac{t Hp(x)}{Y_0(x)}+t Hp_{-1}(x)-t\bar{x} Vp\left(Y_0(x)\right)+t Vp_{-1}\left(Y_0\right)=0\\
        &-\frac{t Hp\left(\bar{x}\right)}{Y_0(x)}+t Hp_{-1}\left(\bar{x}\right)-t x Vp\left(Y_0(x)\right)+t Vp_{-1}\left(Y_0\right)=0.\label{out 5}
    \end{split}
\end{align}
Eliminate $Vp(Y_0(x))$ in \eqref{out 5} by a linear combination,
\begin{align}
    \frac{t^2 x Hp(x)}{Y_0}-\frac{t^2 \bar{x}Hp\left(\bar{x}\right)}{ Y_0}+t^2\bar{x} Hp_{-1}\left(\bar{x}\right)-t^2 x Hp_{-1}(x)-t^2(x-\bar{x})Vp_{-1}(Y_0(x))=0.
\end{align}
There is still a $Vp_{-1}(Y_0(x))$ in the equation. We will soon remove it.

Let us consider the left part, the generating functions of lattice paths ending in the second quadrant $Ul(x,y)$. It satisfies a functional equation,
\begin{align}
    (1-t(x+y+\bar{x}+\bar{y}))Ul(\bar{x},y)=-t\bar{y}Hn(\bar{x})+tHn_{-1}(\bar{x})-tVp_{-1}(y)\label{out 6}
\end{align}
$Ul(\bar{x},y)$ is a formal series in $\bar{x},y$. Substitute $y=Y_0(x)$ into \eqref{out 6} and consider the two pairs $(\bar{x},Y_0(x))$ and $(x,Y_0(x))$. We have,
\begin{align}
    \begin{split}
        &-\frac{t Hn\left(\bar{x}\right)}{Y_0}+t Hn_{-1}\left(\bar{x}\right)-t Vp_{-1}\left(Y_0\right)=0\\
        &-\frac{t Hn(x)}{Y_0}+t Hn_{-1}(x)-t Vp_{-1}\left(Y_0\right)=0\label{out 7}
    \end{split}
\end{align}
A linear combination gives the following equation,
\begin{align}
    \frac{t^2 Hn\left(\bar{x}\right)}{Y_0}-\frac{t^2 Hn(x)}{Y_0}+t^2 \left(-Hn_{-1}\left(\bar{x}\right)\right)+t^2 Hn_{-1}(x)=0.\label{out 8}
\end{align}
Notice that both equations of \eqref{out 7} contains $Vp_{-1}(Y_0(x))$. We can combine \eqref{out 7} and \eqref{out 6} to eliminate $Vp_{-1}(Y_0(x))$, which provides us the last equation.
\begin{small}
\begin{align}
    -\frac{t^3 (x-\bar{x}) Hn\left(\bar{x}\right)}{ Y_0}+t^3 (x-\bar{x}) Hn_{-1}\left(\bar{x}\right)-\frac{t^3 x Hp(x)}{Y_0}+\frac{t^3\bar{x} Hp\left(\bar{x}\right)}{ Y_0}+t^3 x Hp_{-1}(x)-t^3 \bar{x}Hp_{-1}\left(\bar{x}\right)=0.\label{out 9}
\end{align}
\end{small}
Now we have find four equations for four unknown functions $Hp(x),Hp_{-1}(x),Hn(x),Hn_{-1}(x)$ and their automorphisms $x\to 1/x$. Combine \eqref{out 2}, \eqref{out 3}, \eqref{out 8}, \eqref{out 9}, we construct a $4\times 4$ matrix cRBVP,
\begin{align}
    \left(\begin{array}{c}
    Hn_{-1}(\bar{x})\\
    Hp_{-1}(\bar{x})\\
    Hn(\bar{x})\\
    Hp(\bar{x})
    \end{array}\right)=M(x)
        \left(\begin{array}{c}
    Hn_{-1}(x)\\
    Hp_{-1}(x)\\
    Hn(x)\\
    Hp(x)
    \end{array}\right)
    +
    C_0^T\label{RBVP for out},
\end{align}
where $M(x)=P_0(x)+\sqrt{\Delta}P_1(x)$,
\begin{align}
    P_0(x)=\left(
\begin{array}{cccc}
 \frac{1}{2} & -\frac{1}{2} & 0 & 0 \\
 -1 & 0 & -\frac{t x^2+t-x}{2 t x} & 0 \\
 0 & 0 & \frac{1}{2} & 0 \\
 \frac{x \left(t x^2+t-x\right)}{2 t} & \frac{x \left(t x^2+t-x\right)}{2 t} & \frac{3 t^2 x^4-2 t^2 x^2+t^2-2 t x^3-2 t x+x^2}{2 t^2 x^2} & x^2 \\
\end{array}
\right),
\end{align}
and
\begin{align}
    P_1(x)=\left(
\begin{array}{cccc}
 -\frac{x \left(t x^2+t-x\right)}{2 \left(t (x-1)^2-x\right) \left(t (x+1)^2-x\right)} & -\frac{x \left(t x^2+t-x\right)}{2 \left(t (x-1)^2-x\right) \left(t (x+1)^2-x\right)} & -\frac{t x^2}{\left(t (x-1)^2-x\right) \left(t (x+1)^2-x\right)} & 0 \\
 0 & 0 & \frac{1}{2 t} & 0 \\
 \frac{t x^2}{\left(t (x-1)^2-x\right) \left(t (x+1)^2-x\right)} & \frac{t x^2}{\left(t (x-1)^2-x\right) \left(t (x+1)^2-x\right)} & \frac{x \left(t x^2+t-x\right)}{2 \left(t (x-1)^2-x\right) \left(t (x+1)^2-x\right)} & 0 \\
 -\frac{x^2}{2 t} & -\frac{x^2}{2 t} & \frac{x-t \left(x^2+1\right)}{2 t^2 x} & 0 \\
\end{array}
\right).
\end{align}
\subsection{Algebraic Properties of Walks Starting Outside the Quadrant}
It is not hard to check the eigenvalues of $P_0(x)P_0(1/x)$ are $\{1,1,1/4,1/4\}$ and the eigenvalues of $P_1(x)P_1(1/x)$ are $\left\{0,0,\frac{3 x^2}{4 \left(t (x-1)^2-x\right) \left(t (x+1)^2-x\right)},\frac{3 x^2}{4 \left(t (x-1)^2-x\right) \left(t (x+1)^2-x\right)}\right\}$. $\mu_1=\mu_2=0$ form a subspace of dimension two. Two linearly independent eigenvectors in the null-space are,
\begin{align}
    \begin{split}
        &v_1(x)=\left(-\frac{x \left(t^2 x^4-2 t^2 x^2+t^2-2 t x^3-2 t x+x^2\right)}{t \left(t x^2+t-x\right)},-\frac{t^2 x^4-2 t^2 x^2-t^2+2 t x^3+2 t x-x^2}{t x \left(t x^2+t-x\right)},0,1\right)\\
        &v_2(x)=\left(\frac{2 t x}{t x^2+t-x},-\frac{t x}{t x^2+t-x},1,0\right).
    \end{split}
\end{align}
$\{v_1(x),v_2(x)\}$ span the the subspace. Multiply $v_1(x),v_2(x)$ to the left to \eqref{RBVP for out}, we have,
\begin{align}
    \begin{split}
    &-2 (x^2-1)Hn(x)+Hp\left(\bar{x}\right)-x^2 Hp(x)-\frac{Hn_{-1}\left(\bar{x}\right) \left(t x^2-2 t x+t-x\right) \left(t x^2+2 t x+t-x\right)}{t \left(t (x+\bar{x})-1\right)}\\
    &-\frac{Hn_{-1}(x) \left(t x^2-t+x\right) \left(3 t x^2+t-x\right)}{t x^2 \left(t(x+\bar{x})-1\right)}-\frac{Hp_{-1}\left(\bar{x}\right) \left(t^2 x^4-2 t^2 x^2-t^2+2 t x^3+2 t x-x^2\right)}{t x^2 \left(t(x+\bar{x})-1\right)}\\
    &-\frac{Hp_{-1}(x) \left(t^2 x^4+t^2-2 t x^3-2 t x+x^2\right)}{t \left(t (x+\bar{x})-1\right)}-\frac{(x-1) x (x+1)}{t}\\
    &+\frac{x(x-\bar{x}) \sqrt{\left(-t x^2-t+x\right)^2-4 t^2 x^2}}{t \left(t(x+\bar{x})-1\right)}=0\label{out v1}
    \end{split},
\end{align}
and
\begin{align}
    \begin{split}
        &+Hn\left(\bar{x}\right)-Hn(x)+\frac{2 t  Hn_{-1}\left(\bar{x}\right)}{t(x+\bar{x})-1}-\frac{2 tHn_{-1}(x)}{t(x+\bar{x})-1}+\frac{t Hp_{-1}(x)}{t(x+\bar{x})-1}-\frac{t  Hp_{-1}\left(\bar{x}\right)}{t(x+\bar{x})-1}\\
        &+\frac{(x-\bar{x})  \sqrt{\left(-t x^2-t+x\right)^2-4 t^2 x^2}}{2 t  x\left(t (x+\bar{x}))-1\right)}-\frac{ (x-\bar{x})}{2 t }=0\label{out v2}.
    \end{split}
\end{align}
The $[x^>]$ terms of both \eqref{out v1} and \eqref{out v2} give linear relation between $Hp(x),Hp_{-1}(x),Hn(x),Hn_{-1}(x)$ with an extra D-finite term $PR(x)$. Notice that $\bar{x}\times \eqref{out v1}-2 x\times \eqref{out v2}$ gives a linear relation without $\sqrt{\Delta}$,
\begin{align}
    \begin{split}
        &\frac{Hn_{-1}(x) \left(t(x+\bar{x})-1\right)}{t x}-\bar{x}\frac{Hn_{-1}\left(\bar{x}\right) \left(t (x+\bar{x})-1\right)}{t}-2 x Hn\left(\bar{x}\right)+2 \bar{x}Hn(x)\\
        &+\frac{Hp_{-1}\left(\bar{x}\right) \left(t(x+\bar{x})-1\right)}{t x}-\frac{Hp_{-1}(x) \left(t(\bar{x}+x)-1\right)}{tx}+\bar{x}Hp\left(\bar{x}\right)-x Hp(x)=0
    \end{split}
\end{align}
This is the reason of zero orbit-sum \eqref{out full orbit sum}. Since the null-space of $P_1(x)P_1(1/x)$ is dimension two, we can find a vector $v(x)$ inside this subspace such that $v(x)C_0(x)^T=0$. But for general vectors in the null-space, for example \eqref{out v1} and \eqref{out v2} ,$v(x)C_0(x)^T\neq0$.

The subspace with eigenvalue $1/4$ is also integrable. The eigenvectors corresponding to $\lambda=1/4$ is,
\begin{align}
    \begin{split}
        &v_3(x)=(1,1,0,0)\\
        &v_4(x)=(0,0,1,0).
    \end{split}
\end{align}
By the discussion in \cref{RBVP}, we immediately know for any $v(x)$ in this subspace,

$\left(v(\bar{x})+\frac{1}{m_0(\bar{x})}v(x)P_0(\bar{x})\right)H(x)^T$ or, $Hn(\bar{x})$ satisfies a polynomial equation of degree $3$ with algebraic coefficients.
\section{Final Comments}
The main objective of this paper is to establish the theory of matrix cRBVP in the framework of analytic combinatorics and to solve some non-Weyl walks avoiding a quadrant. We call an $n\times n$ matrix cRBVP integrable if it can be reduced to $n$ independent equations in one variable\footnote{We need to find $n-1$ independent equations in advance, the last one can be obtained by the matrix. There are $n$ independent equations in all.}. These equations can be linear or polynomial equations with one catalytic variable. The integrability condition of this matrix cRBVP depends on the eigenvalues and eigenvectors of some associated matrix, which is concluded in \cref{conclude}. 

There are some discussions about what shall be done next.
\begin{enumerate}
    \item Our theory applies to various 2-D lattice walk problems, including $M$-quadrant cones or weighted walks. We have also checked the model discussed in \cite{bousquet2016square} and add weights to the steps meeting the boundaries with the restriction that the orbit-sum condition is satisfied. In our theory, orbit-sum condition means the associated matrix $P_0(x,t)$ has an eigenvalue $1$ (or $P_1(x,t)$ has an eigenvalue $0$). However, for all the lattice walk models we have calculated, this also guarantees $P_0(x,t)P_0(1/x,t)$ has an eigenspace with a double-root eigenvalue $1/4$. We wonder whether this is a universal phenomenon and want to understand what causes this phenomenon. 
    \item We discussed how to resolve a matrix cRBVP to several polynomial equations with one catalytic variable (possibly with some extra D-finite terms). For the model to be explicitly solvable, we need to solve these polynomial equations. Recall \eqref{polyproof matrix}. For a polynomial equation $P(Q(x,t),Q_1,Q_2\dots Q_k,t,x)=0$, if we find $k$ distinct $X_i$ such that the following equations hold
\begin{align}
\begin{split}
&P(Q(X_i,t),Q_1,Q_2\dots Q_k,t,X_i)=0\\
&\partial_{x_0}P(Q(X_i,t),Q_1,Q_2\dots Q_k,t,X_i)=0\\
&\partial_x P(Q(X_i,t),Q_1,Q_2\dots Q_k,t,X_i)=0,\label{dis}
\end{split}
\end{align}
and the determinant of the Jacobi matrix defined by \eqref{Jacob} is not zero, we can explicitly solve \eqref{dis}. However, do we always have enough $X_i$ for a well-defined problem? Our recent work \cite{xu2023combinatorial} shows that this depends on some parameters for scalar cRBVP. We conjectured this also happens in the polynomial equations obtained from the matrix cRBVP.
\begin{comment}
\item We are still lack of examples. Lattice walk is the simplest example of constructing polynomial equations with catalytic variables. There are many other combinatorial objects, for example, enumeration problems in pattern avoiding permutations or Yang tableaux \cite{bousquet2011counting}. The importance to mention them is that these models are associated with equations in more that two catalytic variables (higher dimensions) and they are solved by the algebraic kernel method directly. The full orbit-sum interpreted the solvability of these models. Can we extend the criteria of cRBVP to these higher dimensional models?
\end{comment}
\item Is it possible to extend the idea of solving matrix cRBVP back to the analytic insight? The main obstruction we met in solving the cRBVP is that the irreducible factor in the characteristic polynomial introduce a $\sqrt{\delta}$ which is not analytic either near $x=0$ nor $x=\infty$. It is not suitable for the positive term extraction. In the original Riemann boundary value problem with Carleman shift, we are dealing with functions analytic on some curve. $x,1/x$ corresponds to $s,\sigma(s)$ on some curve $s\in X_0(y_1y_2)$ (see \cite{raschel2012counting} for details). Intuitively speaking, we find an matrix RBVP on a curve,
\begin{align}
    \begin{pmatrix}
        A_0(\sigma (s))\\
        A_1(\sigma(s))
    \end{pmatrix}
    =\Big(P_0(y(s))+(s-\sigma(s) )P_1(y(s))\Big)
     \begin{pmatrix}
        A_0(s)\\
        A_1(s)     
    \end{pmatrix}
    +C(s)^T\label{matrix RBVP},
\end{align}
The matrix is reformulated in the symmetric part and the antisymmetric part under $s\to\sigma(s)$. The quadratic terms of $s,\sigma(s)$ disappear in the coefficient matrix due to the quadratic relation between $s,y(s)$. Since $y(s)=y(\sigma(s))$, $P_0(y(s))$ and $P_1(y(s))$ are analytic on the curve. Then our job becomes finding vectors $v(\sigma(s),y(s)),u(s,y(s))$ such that,
\begin{align}
    v(\sigma(s),y)(P_0(y(s))+(s-\sigma(s))P_1(y(s)))=\lambda(s) u(s,y).\label{RBVP ma}
\end{align}

However, the setting of RBVP and cRBVP are different. There are some difference in the coefficient matrix \eqref{RBVP ma} and we do not know whether we can do this exactly. 
\item In mathematical physics, Birkhoff factorization is associated with some loop group structures and it is generally called Birkhoff decomposition. For example, Let $\phi$ be a algebra homomorphism $Hom(H,A)$. Let $K$ be a matrix algebra and $A=K[x^{-1}][[x]]$ be the algebra of Laurent series. In the theory of connected filtered cograded Hopf algebra \cite{guo2008algebraic}, there are unique linear maps, $\phi_-:H\to K[x^{-1}]$, $\phi_+:H \to K[[x]]$, such that
\begin{align}
    \phi=\phi_-^{*(-1)}*\phi_+.
\end{align}
$\phi_-^{*(-1)}$ is defined by antipode, see \cite{guo2008algebraic} for detailed definitions. Then, for any $h\in H$ (Hopf algebra), $\phi(h)$ admits a Birkhoff factorization in matrix form. I wonder whether we can find some algebra criteria for the results we find in this paper, especially for the double root cases. Roughly speaking, we are not solving the matrix cRBVP by factorization but by reducing it to polynomial equations with catalytic variables. The space of $H_1(1/x),H_2(1/x),H_3(1/x)$ is not related to $H_1(x),H_2(x),H_3(x)$ as a vector space homomorphism but a map between polynomial rings.
\end{enumerate}
\section*{Acknowledgment}
This work is supported by Beijing Institute of Mathematical Sciences and Applications (BIMSA).\\
During the preparation of this work, Ruijie Xu used Deepseek in order to improve the writing. After using this tool/service, Ruijie Xu reviewed and edited the content as needed and takes full responsibility for the content of the published article.
\bibliographystyle{alpha}
\bibliography{reportbib}
\end{document}